\newtheoremstyle{break}
  {\topsep}{\topsep}%
  {\itshape}{}%
  {\bfseries}{}%
  {\newline}{}%
\theoremstyle{break} \newtheorem{theorem}{Theorem}[section]
\theoremstyle{break} \newtheorem{satz}{Theorem}
\theoremstyle{break} \newtheorem{lemma}[theorem]{Lemma}
\theoremstyle{break} 
\theoremstyle{break}
\newtheorem{remark}[theorem]{Remark}
\numberwithin{equation}{section}
\definecolor{orange}{rgb}{1,0.5,0}
\newcommand{\hide}[1]{}
\newcommand{\N}{{\mathbb{N}}}
\newcommand{\R}{{\mathbb{R}}}
\newcommand{\D}{{\mathbb{D}}}
\newcommand{\Ha}{{\mathbb{H}}}
\newcommand{\C}{{\mathbb{C}}}
\renewcommand{\H}{{\mathbb{H}}}
\newcommand{\hcap}{\operatorname{hcap}}
\newcommand\with{\ \vrule\ }  
\newcommand{\LandauO}{\mathcal{O}} 
\newcommand{\Landauo}{{\scriptstyle\mathcal{O}}}
\newcommand{\diam}{\operatorname{diam}}
\def\supp{\mathop{{\rm supp}}}
\def\Re{\mathop{{\rm Re}}}
\def\Im{\mathop{{\rm Im}}}
\newcommand{\eps}{\varepsilon}
\begin{document}

 \setcounter{section}{0}

   \author{Oliver Roth and Sebastian Schlei\ss inger}
   \title{\vspace*{-0.8cm}The Schramm--Loewner equation for multiple~slits}
   \date{\today}
   \maketitle

\begin{abstract}
     We prove that any disjoint union of finitely many simple curves in the upper half--plane
     can be generated in a unique way by the chordal multiple--slit Loewner equation with constant weights.
\end{abstract}

\section{Introduction and results}

Recent progress in the theory of Loewner equations
(\cite{Loewner:1923,Schramm:2000,Lawler:2005,BCD:2012})
suggests that
one of the most useful descriptions of a simple plane curve is by encoding it into a
growth process modeled by the Schramm--Loewner equation. In this paper we show that
any disjoint union of \textit{finitely many} simple curves can be encoded in a
\textit{unique} way into a growth process described by a multi--slit version of
the Schramm--Loewner equation. In order to state our result we need to introduce some notation.

\medskip

Let $\Ha:=\{z\in\C \with \Im(z)>0\}$ be the upper half--plane. A \textit{slit}
is the trace $\Gamma=\gamma(0,1]$ of a simple curve  $\gamma
:[0,1] \to \overline{\H}$  with $\gamma(0) \in \R$ and $\Gamma \subseteq \H$. Since $\H \backslash \Gamma$ is a simply connected
domain, (a
version of) Riemann's mapping theorem guarantees that there is a unique
conformal map $g_{\Gamma}$ from $\H \backslash \Gamma$ onto $\H$ with
\textit{hydrodynamic normalization}
$$ g_{\Gamma}(z)=z+\frac{b}{z}+\LandauO(|z|^{-2}) \quad \text{ as } \quad z \to
\infty \, $$
for some $b>0$. We call $\hcap(\Gamma):=b$ the \textit{half--plane capacity}
of the slit $\Gamma$. The following well--known result provides a description
of the slit $\Gamma$ with the help of the chordal
Loewner equation (Schramm--Loewner equation).

\begin{satz}[The one--slit chordal Loewner equation] \label{satz:1} \label{slitex}
Let $\Gamma$ be a slit with $\hcap(\Gamma)=2T$. Then there exists a unique
continuous driving function $U : [0,T] \to \R$ such that the solution $g_t$ to
the chordal Loewner equation
\begin{equation}\label{ivp}
   \dot{g}_t(z)=\frac{2}{g_t(z)-U(t)}, \quad g_0(z)=z,
\end{equation}
has the property that $g_T=g_{\Gamma}$.
\end{satz}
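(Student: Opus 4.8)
The plan is to realise $\Gamma$ as the terminal object of a Loewner chain built from its own initial sub-slits, parametrised by half-plane capacity, and then to read off the driving function $U$ as the image of the moving tip. \emph{Step 1 (reparametrisation).} For $s\in(0,1]$ the set $\gamma(0,s]$ is again a slit, and $\H\setminus\gamma(0,s]$ converges to $\H\setminus\gamma(0,s_0]$ in the Carathéodory kernel sense as $s\to s_0$; since $\hcap$ is strictly monotone under inclusion of hulls and varies continuously under kernel convergence of (uniformly bounded) hull complements, the map $s\mapsto\hcap(\gamma(0,s])$ is a continuous strictly increasing bijection of $(0,1]$ onto $(0,2T]$. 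Reparametrising $\gamma$ I may thus assume $\hcap(\gamma(0,t])=2t$ for $t\in[0,T]$; write $\Gamma_t:=\gamma(0,t]$ and $g_t:=g_{\Gamma_t}$, so $g_0=\mathrm{id}$ and $g_T=g_\Gamma$. \emph{Step 2 (the candidate driving function).} As $\H\setminus\Gamma_t$ is a Jordan domain on the sphere and the tip $\gamma(t)$ is a single prime end of it (crosscuts near the tip cannot separate the two sides of the slit), $g_t$ extends continuously to $\gamma(t)$; set $U(t):=g_t(\gamma(t))\in\R$, with $U(0):=\gamma(0)$. For $0\le s<t\le T$ the composition $g_t\circ g_s^{-1}$ is the hydrodynamically normalised map of the hull $K_{s,t}:=\overline{g_s(\gamma(s,t])}$, which is a slit rooted at $U(s)$, and comparing expansions at $\infty$ in $g_t=(g_t\circ g_s^{-1})\circ g_s$ gives $\hcap(K_{s,t})=2(t-s)$.

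The crux is \emph{Step 3}: showing that $U$ is continuous and that $g_t$ solves \eqref{ivp}. Both follow once one has the uniform estimate $\delta(h):=\sup\{\diam K_{s,t}\,:\,0\le s<t\le T,\ t-s\le h\}\to 0$ as $h\to 0^+$. Since $\gamma$ is uniformly continuous, $\diam\gamma(s,t]$ is small for small $t-s$, so this reduces to controlling how much $g_s$ can expand a small arc lying against its own slit tip; I would obtain it from the modulus of continuity of $\gamma$ together with a quantitative conformal distortion bound near a slit tip that does not deteriorate as $s$ runs through $[0,T]$ (equivalently, Carathéodory kernel convergence made effective along the chain). Granting $\delta(h)\to 0$: because $K_{s,t}$ is rooted at $U(s)$ and has diameter $\le\delta(t-s)$, while any hydrodynamically normalised map displaces each point by at most a universal multiple of the radius of its hull, one gets $|U(t)-U(s)|\le C\,\delta(t-s)$, so $U$ is continuous. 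For the equation, fix $z\in\H\setminus\Gamma$, so that $g_t(z)$ is defined for all $t\in[0,T]$ and $\Im g_t(z)$ is bounded below; writing $w=g_t(z)$ and $g_{t+h}(z)-g_t(z)=g_{K_{t,t+h}}(w)-w$, the standard expansion $g_K(w)=w+\hcap(K)/(w-x_0)+\LandauO\!\big(\hcap(K)\,\operatorname{rad}(K)/|w-x_0|^2\big)$ for a small hull $K$ near $x_0\in\R$ with $w$ bounded away from $K$ — applied with any $x_0\in K_{t,t+h}$ (all within $\delta(h)$ of $U(t)$) and $\hcap(K_{t,t+h})=2h$ — yields, after division by $h$ and $h\to 0^+$, the right derivative $2/(g_t(z)-U(t))$; the left derivative is computed identically with $K_{t-h,t}$, using the already established continuity of $U$. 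Since $t\mapsto 2/(g_t(z)-U(t))$ is then continuous, $t\mapsto g_t(z)$ is $C^1$ and solves \eqref{ivp}; with $g_0=\mathrm{id}$ and $g_T=g_\Gamma$ this proves existence.

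\emph{Step 4 (uniqueness).} Suppose $\widetilde U$ is a continuous driving function whose Loewner flow $\widetilde g_t$ satisfies $\widetilde g_T=g_\Gamma$. By the general theory of \eqref{ivp} for continuous driving terms, $\widetilde g_t=g_{\widetilde H_t}$ for an increasing family of hulls $\widetilde H_t$ with $\hcap(\widetilde H_t)=2t$. A hull is determined by its hydrodynamically normalised map, so $\widetilde g_T=g_\Gamma$ forces $\widetilde H_T=\Gamma$, hence $\widetilde H_t\subseteq\Gamma$ for all $t$. A hull contained in the simple arc $\Gamma$ must be of the form $\gamma(0,\sigma_t]$ — any other sub-arc of $\Gamma$ is a compact arc inside $\H$, whose complement in $\H$ is not simply connected — and then $\hcap(\gamma(0,\sigma_t])=2t=\hcap(\Gamma_t)$ together with strict monotonicity of capacity gives $\sigma_t=t$, i.e. $\widetilde g_t=g_{\Gamma_t}=g_t$ for every $t$. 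Substituting a fixed $z\in\H\setminus\Gamma$ into \eqref{ivp} for the two flows (note $\dot g_t(z)=2/(g_t(z)-U(t))\ne 0$) then gives $\widetilde U(t)=g_t(z)-2/\dot g_t(z)=U(t)$ for all $t$.

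The hard part is the uniform decay $\delta(h)\to 0$ in Step 3: the tip of $\Gamma_s$ is a boundary point at which $g_s$ degenerates — locally it behaves like a square root — so mere local uniform convergence of the maps $g_s$ is not enough, and one must genuinely combine the continuity of $\gamma$ with a distortion estimate at the tip that is uniform over $s\in[0,T]$.
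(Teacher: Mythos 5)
The paper does not actually prove Theorem~A; it quotes it as classical and refers the reader to Kufarev--Sobolev--Spory\v{s}eva, to \cite{Lawler:2005}, and to the survey \cite{GM} for complete proofs. So there is no in-paper argument to compare against, and I can only assess your proposal on its own. The overall architecture is the standard one and is essentially right: reparametrise by half--plane capacity, read off $U(t)=g_t(\gamma(t))$ from the prime--end extension of $g_t$ (your Jordan--domain argument for the tip being a single prime end is correct), identify $g_t\circ g_s^{-1}$ as the normalised map of the slit $K_{s,t}$ with $\hcap(K_{s,t})=2(t-s)$, derive \eqref{ivp} from the small--hull expansion $g_K(w)=w+\hcap(K)/(w-x_0)+\LandauO(\hcap(K)\cdot r/|w-x_0|^2)$ (valid since $\Im g_t(z)\ge\Im g_T(z)>0$ keeps $w$ away from $\R$), and prove uniqueness by observing that any competing chain's hulls are sub--hulls of $\Gamma$, hence initial sub--arcs, hence equal to $\gamma(0,t]$ by the capacity normalisation; Steps~1, 2, 4 and the ODE computation are all fine.

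The genuine gap is exactly where you flag it: $\delta(h):=\sup\{\diam K_{s,t}:t-s\le h\}\to 0$ as $h\to 0^+$. This is not a peripheral lemma --- it is the entire analytic content of the theorem, since both the continuity of $U$ and the passage to the derivative in \eqref{ivp} rest on it; as written, the existence half is therefore unproved. More importantly, the route you indicate (``a quantitative conformal distortion bound near a slit tip that does not deteriorate as $s$ runs through $[0,T]$'') is not the way the cited sources close this gap, and it is doubtful it can be made to work as stated: $g_s$ has square--root behaviour at the tip $\gamma(s)$, $g_s'$ blows up there, and $\gamma(s,t]$ emanates precisely from that singular point, so a pointwise distortion estimate is exactly what is \emph{not} available. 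One also cannot shortcut via ``$K_{s,t}$ connected with small $\hcap$ implies small $\diam$'': a slit hugging $\R$ (say $\{u+i\eps u(L-u):0<u\le L\}$ as $\eps\to 0$) has $\hcap\to 0$ but diameter $\approx L$, so smallness of $\hcap(K_{s,t})=2(t-s)$ alone controls only the height of $K_{s,t}$, not its spread along $\R$. The actual arguments in \cite{Lawler:2005} and \cite{GM} obtain the required uniformity from the continuity of the boundary extensions $\overline{g_s}$ on the doubled slit (Carath\'eodory's extension theorem for the Jordan domains $\H\setminus\Gamma_s$, made uniform in $s$ by a normal--families/Carath\'eodory--kernel compactness argument), or from harmonic--measure/extremal--length estimates; you should either carry out one of these or cite the lemma explicitly. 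Until then this step is a pointer to a proof rather than a proof.
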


Note that in Theorem \ref{satz:1} the slit $\Gamma$ ``starts'' at the point $U(0)$.

\medskip

To the best of our knowledge the first proof of Theorem \ref{satz:1} is due
to Kufarev, Sobolev, Spory{\v{s}}eva in \cite{MR0257336}. The basic recent
reference for Theorem \ref{satz:1} is the book of Lawler \cite{Lawler:2005}. 
We also refer to the survey paper \cite{GM} for a complete and rigorous
proof of Theorem \ref{satz:1} using classical complex analysis.

\medskip

Now, let $\Gamma=(\Gamma_1, \ldots, \Gamma_n)$ be a \textit{multi--slit},
that is, the union of $n$  slits $\Gamma_1, \ldots,
\Gamma_n$ with disjoint closures. As before, there is a unique conformal map $g_{\Gamma}$ from $\H
\backslash \Gamma$ onto $\H$ with expansion
$$g_{\Gamma}(z)=z+b/z+\LandauO(|z|^{-2}) \quad \text{ as } \quad z \to
\infty\, , $$ and we call $\hcap(\Gamma):=b>0$
the half--plane capacity of $\Gamma$. The main result of the present paper is
the following extension of Theorem \ref{satz:1}.

\begin{theorem}\label{Charlie2}
 Let $\Gamma=(\Gamma_1,\ldots, \Gamma_n)$ be a multi--slit with $\hcap(\Gamma)=2T$.
Then there exist unique weights $\lambda_1,...,\lambda_n\in [0,1]$ with
$\sum_{k=1}^n\lambda_k=1$ and unique continuous driving functions
$U_1,...,U_n:[0,T]\to\R,$ such that the solution $g_t$ of the chordal Loewner equation \begin{equation}\label{more}\dot{g}_t(z)=\sum_{k=1}^n\frac{2\lambda_k}{g_t(z)-U_{k}(t)},\qquad g_0(z)=z,\end{equation} 
satisfies $g_T=g_{\Gamma}$. 
\end{theorem}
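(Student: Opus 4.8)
The plan is to regard the $n$ slits as the coordinates of an $n$-parameter family of hulls and then to single out the one ``diagonal'' path through this family that is generated by constant weights: existence will come from a discretization that reduces everything to the one-slit Theorem~\ref{satz:1}, and uniqueness from a monotonicity (``steering'') argument. To set up, fix simple parametrizations $\gamma_k\colon[0,T_k]\to\overline{\H}$ of the slits, and for $\vec t=(t_1,\dots,t_n)$ in the box $B:=\prod_{k=1}^{n}[0,T_k]$ put $K_{\vec t}:=\bigcup_{k=1}^{n}\gamma_k(0,t_k]$, let $g_{\vec t}\colon\H\setminus K_{\vec t}\to\H$ be the associated hydrodynamically normalized conformal map, and set $a(\vec t):=\tfrac12\hcap(K_{\vec t})$. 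The starting point is to check that $a$ is continuous on $B$, strictly increasing in each variable, with $a(\vec0)=0$ and $a(\vec T)=T$; in particular $\{a=T\}\cap B=\{\vec T\}$. The goal is then to produce a coordinatewise nondecreasing path $\vec t(\cdot)\colon[0,T]\to B$ with $\vec t(0)=\vec0$, $\vec t(T)=\vec T$ and $a(\vec t(s))=s$, together with a weight vector $\vec\lambda$ in the unit simplex, such that the growth of $K_{\vec t(s)}$ is governed by~\eqref{more} with the \emph{constant} weights $\lambda_1,\dots,\lambda_n$; then $g_T=g_\Gamma$ automatically, since $K_{\vec t(T)}=\Gamma$.

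\textbf{Existence.} I would construct such a path by a discretization adapted to the level sets of $a$. Fix $m\in\N$; for $0\le i\le m$ pick configurations $\vec t^{\,(m,i)}\in B$ with $a(\vec t^{\,(m,i)})=iT/m$, $\vec t^{\,(m,0)}=\vec0$, $\vec t^{\,(m,m)}=\vec T$, and on $[(i-1)T/m,\,iT/m]$ grow $\Gamma_1$, then $\Gamma_2$, \dots, then $\Gamma_n$ from $\vec t^{\,(m,i-1)}$ to $\vec t^{\,(m,i)}$, each stage governed by the one-slit equation~\eqref{ivp} of Theorem~\ref{satz:1} and time-changed so that $\hcap$ grows at rate $2$ on all of $[0,T]$. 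The resulting chain $g^{(m)}_t$ satisfies~\eqref{ivp} with a driving function that is a step function taking $n$ values per subinterval; averaging over the $i$-th subinterval, the driving ``measure'' is $\approx\sum_k\lambda^{(m)}_{k}(i)\,\delta_{U^{(m)}_{k}}$, where $\lambda^{(m)}_{k}(i)$ is the fraction of real time on that subinterval spent growing $\Gamma_k$. One wants these fractions to be asymptotically independent of $i$ --- equivalently, the accumulated per-slit capacities to stay in a fixed ratio $\lambda^{(m)}_1:\dots:\lambda^{(m)}_n$ --- and the resulting $\vec\lambda^{(m)}$ to be pinned down by the requirement $\vec t^{\,(m,m)}=\vec T$. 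That such a $\vec\lambda^{(m)}$ exists and is essentially unique follows from a monotone \emph{steering} argument: for each candidate $\vec\lambda$ the associated path exits $B$ through its far boundary at a location depending continuously and monotonically on $\vec\lambda$, sweeping from one axis to another as $\vec\lambda$ runs over the simplex, hence hitting the corner $\vec T$ for exactly one $\vec\lambda$. Finally $\vec\lambda^{(m)}$ lies in a compact set, and the driving functions $U^{(m)}_k$ stay in a fixed compact subset of $\R$ and are equicontinuous uniformly in $m$ by the standard modulus-of-continuity estimates attached to~\eqref{ivp}; passing to a subsequence, $\vec\lambda^{(m)}\to\vec\lambda$ and $U^{(m)}_k\to U_k$ uniformly, and weak-$*$ convergence of the driving measures together with Carath\'eodory kernel convergence of the corresponding flows shows that the limit chain solves~\eqref{more} with the \emph{constant} weights $\lambda_k$ and with $g_T=g_\Gamma$.

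\textbf{Uniqueness.} Let now $g_t$ be any solution of~\eqref{more} with $g_T=g_\Gamma$. Its hull is an increasing family of sub-multi-slits $K_t=\bigcup_k\gamma_k(0,\tau_k(t)]$ with $\hcap(K_t)=2t$ and $\tau_k(T)=T_k$. Factoring $g_t$ through the hydrodynamic map that removes all slits except $\Gamma_k$ (up to its current tip) exhibits the summand $2\lambda_k/(g_t-U_k)$ of~\eqref{more} as a one-slit Loewner field pointed at that tip, from which one reads off that the half-plane capacity accumulated ``through $\Gamma_k$'' equals exactly $2\lambda_k t$. Since accumulating capacity through $\Gamma_k$ at a fixed linear rate determines how far $\Gamma_k$ has grown as a function of $\vec\tau$, the path $\vec\tau(\cdot)$ and the vector $\vec\lambda$ are exactly the output of the monotone steering problem above, which has a unique solution; hence $K_t$ and each tip $\gamma_k(\tau_k(t))$ are uniquely determined, and then $U_k(t)=g_t(\gamma_k(\tau_k(t)))$ is forced. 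I expect the main obstacle to be precisely this \emph{decoupling}: proving rigorously that~\eqref{more} with constant weights grows the $n$ slits at individually linear capacity speeds --- so that the $\lambda_k$ become genuine geometric invariants of $\Gamma$ --- and extracting from the one-slit theory the uniform continuity and kernel-convergence estimates needed to justify the limit in the existence proof.
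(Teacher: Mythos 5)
Your existence scheme is close in spirit to the paper's (discretize, grow slits sequentially governed by Theorem~\ref{satz:1}, pass to a limit using Carath\'eodory convergence), but the proposal waves away exactly the two points that carry the real weight of the paper's proof, and the second one is actually incorrect.

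\textbf{Equicontinuity of the driving functions is not ``standard.''} After the discretization, the flow $g^{(m)}_t$ is driven by a \emph{single} discontinuous (bang--bang) forcing that jumps between the images of the $n$ current tips. The quantities you must control uniformly in $m$ are the $n$ separate functions $t\mapsto g^{(m)}_t(\text{tip of }\Gamma_k\text{ at time }t)$; these are not covered by the modulus-of-continuity estimates attached to~\eqref{ivp}, because~\eqref{ivp} is run with a \emph{fixed continuous} driving function and here the driving is a rapidly oscillating step function. Establishing that these $n$ functions form a precompact (in fact compact) family in $C[0,T]$, uniformly over all admissible bang--bang schemes, is precisely the paper's Theorem~\ref{thm:aux}, and its proof occupies Section~3: it needs a two-sided global capacity estimate (Lemma~\ref{hcap2new}/\ref{hcap2}), a Lipschitz bound for the effect of growing the other slit on the current tip (Lemma~\ref{lem:II}) via the Nevanlinna representation, and a uniform modulus of continuity coming from the one-slit driving function of the \emph{ambient} slits $\Theta_k$ (Lemma~\ref{lem:III}). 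Calling this ``standard'' is the gap; without it you cannot extract the uniform limits $U_k$ you need in the last step.

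\textbf{The uniqueness step rests on a false claim.} You assert that the half-plane capacity accumulated ``through $\Gamma_k$'' equals exactly $2\lambda_k t$ for all $t$. It does not. What the constant weight $\lambda_k$ gives you is only the \emph{initial} derivative: writing $x_k(t):=\hcap(\gamma_k(0,\tau_k(t)])$, the paper shows $\dot x_k(0)=2\lambda_k$ (Lemma~\ref{nointer}), but for $t>0$ one has
\[
\dot x_k(t)=\frac{2\lambda_k}{C_k(x_k(t),t)}\quad\text{with}\quad 0<C_k<1,
\]
so $x_k$ grows \emph{faster} than linearly (Lemma~\ref{LipDiff}): the conformal map that removes the other slits magnifies capacity. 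Because of this, your ``steering'' argument does not produce a linear diagonal in the box $B$ and you cannot read off $\lambda_k$ from $x_k(t)/2t$ at positive times. The paper's uniqueness argument is structured precisely to circumvent this: first a first-crossing comparison of $\dot x_1(\tau)$ and $\dot x_2(\tau)$ through the common function $C$ forces the two constant weight vectors to coincide, and then standard uniqueness for the ODE $\dot x=2\lambda/C(x,t)$ (with $C$ continuous and positive, Lipschitz in $t$) forces the two Loewner parametrizations, hence the driving functions, to coincide. Your proposal also tacitly assumes that the ``steering map'' from weight vectors to exit configurations is injective, but never proves it; the paper's uniqueness section is exactly a proof of this injectivity, and it is not obviously a monotonicity statement. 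Finally, a smaller but real omission: to make the bang--bang construction run, the paper first \emph{extends} $\Gamma_k$ to larger slits $\Theta_k\supseteq\Gamma_k$ with $\hcap(\Theta_k)=2T$ and disjoint closures; without this, a scheme that grows slit $k$ at constant fraction $\lambda_k$ can ``run out of slit'' before time $T$, and the intermediate-value argument for the parameter $\mu_n$ has nothing to aim at.
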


Some remarks are in order.

\begin{remark}[The multi--slit chordal Loewner equation]
{\rm It is well--known and
easy to prove on the basis of Theorem \ref{satz:1} that under the conditions
of Theorem \ref{Charlie2}  there are
\begin{itemize}
\item[(a)] $n$
continuous weight
functions $\lambda_1, \ldots, \lambda_n :[0,T] \to \R$ with $\lambda_j(t) \ge
0$ and $\lambda_1(t)+\ldots+ \lambda_n(t)=1$ for every $t \in [0,T]$, and
\item[(b)] $n$ continuous driving functions $U_1, \ldots, U_n : [0,T]\to \R$,
\end{itemize}
such that the solution $g_t$ to the  \textit{multi--slit
chordal Loewner equation}
\begin{eqnarray}\label{multifinger}
\dot{g_t}(z)=\sum_{j=1}^n\frac{2\lambda_j(t)}{g_t(z)-U_j(t)},\quad g_0(z)=z\, ,
\end{eqnarray}
satisfies $g_T=g_{\Gamma}$, see Remark \ref{thm:twoslit} below.
  However, the $2n$ functions $\lambda_j(t)$ and $U_j(t)$
are \textit{not} uniquely determined by the multi--slit $\Gamma$ if $n>1$,
simply because in this case there are obviously many Loewner chains
$\tilde{g}_t$ (in the sense of \cite{Lawler:2005}) such that $\tilde{g}_T=g_{\Gamma}$.
Informally, each weight function $\lambda_j(t)$ corresponds to the speed
of growth of the slit $\gamma_j$ (the one that starts at the point $U_j(0)$).
Theorem \ref{Charlie2} shows that one can actually choose \textit{constant}
weight functions $\lambda_j$, which are moreover \textit{uniquely determined}.
 In addition,  then also the driving functions $U_1(t), \ldots, U_n(t)$
are uniquely determined by the multi--slit $\Gamma$. Hence Theorem
\ref{Charlie2} 
provides a
canonical way of describing a multi--slit by a growth process modeled via a
Loewner--type equation. We therefore call the differential equation (\ref{more}), i.e., the
multi--slit chordal Loewner equation with constant weights, the
\textit{Schramm--Loewner equation for the multi--slit~$\Gamma$}. }
\end{remark}

\begin{remark}[The multi--slit Loewner equation in Mathematical Physics]
{\rm We note in passing that the multiple--slit equation (\ref{multifinger}) has
recently been used in mathematical physics
 for the study of certain two--dimensional growth phenomena. 
For instance, in \cite{MR1945279}  the authors analyze ``Laplacian path models'', i.e.~Laplacian growth models for multi--slits. By mapping the upper half--plane conformally onto a half-strip one obtains a Loewner equation for the growth of slits in a half--strip, which can be used to describe Laplacian growth in the ``channel geometry'', see \cite{MR2495460} and \cite{PhysRevE.84.051602}. 
Furthermore, equation (\ref{multifinger}) can be used to model so--called
multiple Schramm--Loewner evolutions, see \cite{MR2310306} and
\cite{1038.82074}, \cite{MR2187598}, \cite{MR2358649}, \cite{Graham2007}.} 
\end{remark}

\begin{remark}[The multi--finger radial Loewner equation; Prokhorov's theorem]
{\rm  For the radial Loewner equation on the unit disk
  $\D:=\{z \in \C \with |z|<1\}$, the multi--slit situation
has already been  studied long time ago by Peschl \cite{Peschl} in 1936. He proved
that for every union $\Gamma$ of $n$ Jordan arcs $\Gamma_1, \ldots, \Gamma_n$ in $\overline{\D} \backslash \{0\}$ such that 
$\D \backslash \Gamma$ is simply connected, there are continuous weight functions
$\lambda_1, \ldots, \lambda_n : [0,T] \to \R$ with $\lambda_j(t) \ge 0$ and
$\lambda_1(t)+\ldots+\lambda_n(t)=1$ for every $t \in [0,T]$, and continuous
driving functions $\kappa_j : [0,T] \to \partial \D$ such that the solution
$w_t$ to
the radial Loewner equation
\begin{equation} \label{eq:rad}
 \dot{w}_t(z)=-w_t(z)\sum \limits_{j=1}^n \lambda_j(t)
\frac{\kappa_j(t)-w_t(z)}{\kappa_j(t)-w_t(z)} \, , \qquad w_0(z)=z \, ,\end{equation}
has the property that $w_T$ maps $\D$ conformally onto $\D \backslash \Gamma$.
As in the chordal case, this representation of the multi--slit $\Gamma$ is not
unique.
However, if 
the Jordan arcs $\Gamma_1, \ldots, \Gamma_n$ are \textit{piecewise analytic}, it is has been proved by D.~Prokhorov \cite[Theorem 1 \& 2]{Prokhorov:1993} that
 one can choose constant weight functions and that then these weights as
well as the continuous driving functions are uniquely determined. 
Prokhorov's result 
forms the basis for his original and penetrating control--theoretic study of extremal problems for
univalent functions, see his monograph \cite{Prokhorov:1993}. Clearly, Prokhorov's result is the analogue of Theorem
\ref{Charlie2} for the radial Loewner equation (\ref{eq:rad}), but only under the
very restrictive additional assumption that the multi--slit is piecewise
analytic. An extension of Prokhorov's theorem for not necessarily
piecewise analytic slits, i.e., the full analogue of Theorem \ref{Charlie2}
for the radial case will be discussed in the forthcoming paper \cite{BS}.}
\end{remark}

\hide{\begin{remark}
{\rm Our proof of Theoerem \ref{Charlie2} only works under the asumption that the
slits $\Gamma_1\,\ldots, \Gamma_n$ are disjoint. We strongly suspect, however, that it
suffices to assume $\Gamma_1, \ldots, \Gamma_n$ are slits such that
$\H \backslash (\Gamma_1 \cup \ldots \cup \Gamma_n)$ is simply connected.}
\end{remark}}

\begin{remark}[Schramm--Loewner constants]
{\rm We call the constant weights $\lambda_1,\ldots, \lambda_n$ in Theorem
  \ref{Charlie2} the \textit{Schramm--Loewner constants of the multi--slit  $\Gamma$}.
 Is there a interpretation
for the Schramm--Loewner constants in terms of \textit{geometric} or
\textit{potential theoretic} properties of $\Gamma$\,?
Since our proof of Theorem \ref{Charlie2} is non--constructive, it would be
interesting to find a method for computing the Schramm--Loewner constants for a given
multi--slit $\Gamma$.
}
\end{remark}

We will now outline  the main idea of the proof of Theorem \ref{Charlie2}
(Existence) for the case of a \textit{two--slit} $(\Gamma_1 ,\Gamma_2)$.
 Roughly speaking, we use a ``Bang--Bang Method'' based on the
one--slit Loewner equation (\ref{ivp}). Let $\Gamma_1$ and $\Gamma_2$ be two slits with disjoint closures. We can 
assume  $\hcap(\Gamma_1 \cup \Gamma_2)=2$. By extending $\Gamma_1$ and
$\Gamma_2$,
 we can find two slits 
 $\Theta_1\supseteq \Gamma_1$ and $\Theta_2\supseteq \Gamma_2$ with disjoint closures such that 
 $\hcap(\Theta_1)=\hcap(\Theta_2)=2$.

\medskip

\textit{Step 1:} Let $\alpha : [0,1] \to \{0,1\}$ be a step function. We construct two
continuous driving functions $U_{1,\alpha}, U_{2,\alpha} : [0,1] \to \R$ such that
the solution  to the Loewner equation
\begin{equation}\label{e}
  \dot{g}_{t,\alpha}(z)=\frac{2\alpha(t)}{g_{t,\alpha}(z)-U_{1,\alpha}(t)}+\frac{2(1-\alpha(t))}{g_{t,\alpha}(z)-U_{2,\alpha}(t)}\,
  ,
  \quad g_{0,\alpha}(z)=z \, , \end{equation}
at time $t=1$ satisfies $g_{1,\alpha}=g_{A_{\alpha}}$, where
 the two--slit $A_{\alpha}$ is a subset of $\Theta_1 \cup \Theta_2$.
Informally, the two--slit $A_{\alpha}$ is generated by letting $\Theta_1$ grow
whenever $\alpha=1$, and by letting $\Theta_2$ grow  whenever $\alpha=0$.
Note that (\ref{e}) has the form of the one--slit Loewner equation (\ref{ivp})
but with a discontinuous (``bang--bang'') driving function.

\medskip
\textit{Step 2:} We show that  the set 
of all driving functions from Step 1
is  a precompact subset of  the Banach space $C[0,1]$ of
continuous functions on $[0,1]$ equipped with the sup--norm
$||\cdot||_{\infty}$. The proof of this key observation requires a fair amount
of technical work, which will be carried out in Section 2 and Section 3.
\medskip

\textit{Step 3:} We construct a sequence of step functions $\alpha_n: [0,1]
\to \{0,1\}$ such that:
\begin{itemize}
\item[(i)] For every $n \in \N$ the two--slit $A_{\alpha_n}\subseteq \Theta_1 \cup \Theta_2$ generated by the step
  function $\alpha_n$ via Step 1 is exactly the two--slit $\Gamma_1 \cup \Gamma_2$.
\item[(ii)] The sequence $(\alpha_n)$ converges weakly in the Banach space $L^1[0,1]$ to a
  constant $\lambda \in [0,1]$.
\end{itemize}
Each  step function $\alpha_n$ is constructed as follows. We divide
$[0,1]$ into $2^n$ disjoint intervals of equal length and let $\mu \in
[0,1]$. On each of these
intervals we let $\Theta_1$ grow on the first subinterval of length $\mu/2^n$
and we let $\Theta_2$ grow on the remaining subinterval of length $(1-\mu)/2^n$.
A continuity argument shows that there is a number $\mu_n \in [0,1]$ such that this process
generates exactly the two--slit $(\Gamma_1,\Gamma_2)$. Passing to a
subsequence if necessary, we may assume that $(\mu_n)$ is convergent with limit $\lambda$. The corresponding step functions $\alpha_n$ then do
have the required properties (i) and (ii).

\medskip

\textit{Step 4:} Using the step functions $\alpha_n$ of
Step 3, we construct the corresponding driving functions $U_{1,\alpha_n}$ and
$U_{2,\alpha_n}$ by Step 1. With the help of Step 2, 
 we get subsequential limit functions $U_1, U_2 \in C[0,1]$ and finally show
that the solution $g_t$ to 
$$ \dot{g}_{t}=\frac{2\lambda}{g_{t}-U_{1}(t)}+\frac{2(1-\lambda)}{g_{t}-U_{2}(t)},
  \quad g_{0}(z)=z \, $$
has the property that $g_1=g_{\Gamma_1 \cup \Gamma_2}$.

\medskip

This paper is organized as follows.
In Sections 2 and 3 we provide a number of technical, but crucial  auxiliary
results, which will be used in Section 4  for the proof of the existence statement of Theorem
\ref{Charlie2}. In Section 5 we establish a dynamic
interpretation of the weights $\lambda_1,\ldots, \lambda_n$, which will be employed
for the proof of the uniqueness statement  of Theorem \ref{Charlie2} in
Section 6. We shall give the details only in the case $n=2$, i.e., for two slits.
The general case of $n \ge 2$ slits can be proved in exactly the
same way by induction. 

\section{The two--slit chordal Loewner equation} \label{sec:two--slit}

We first recall that a bounded subset $A\subset\Ha$  is called a \textit{hull} if 
 $A=\Ha\cap \overline{A}$ and $\Ha\setminus A$ is simply connected, so every
 slit and every multi--slit is a hull.
 For a hull $A$ we denote 
by $g_A$  the unique conformal
 mapping from $\Ha\setminus A$ onto $\Ha$ such that $$g_A(z)=z+\frac{b}{z}+\LandauO\left(|z|^{-2}\right) \quad
 \text{for} \quad |z|\to\infty \, ,$$ where $\hcap(A):=b \ge 0$ is the 
half--plane capacity of $A$.

\medskip

Now, let $\Gamma_1$ and $\Gamma_2$ be slits such that $\Gamma_1 \cup \Gamma_2$ is a
hull.
We call a pair $(\gamma_1,\gamma_2)$ of continuous functions 
$\gamma_j : [0,1] \to \overline{\H}$  with $\gamma_j(0,1]=\Gamma_j$, $j=1,2$,  a
\textit{Loewner parametrization for the hull $\Gamma_1 \cup\Gamma_2$},
if the following two conditions hold:
\begin{itemize}
\item[(i)]
\, Both  functions, $t \mapsto \hcap (\gamma_1(0,t])$ and $t \mapsto
\hcap(\gamma_2(0,t])$, are nondecreasing;
\item[(ii)] 
\, $\hcap (\gamma_1(0,t] \cup \gamma_2(0,t])=2t $ for every $t \in [0,1]$.
\end{itemize}
Informally, $\gamma_1(0,t] \cup \gamma_2(0,t]$, $t \in [0,1]$, is a continuously increasing
family of subhulls  of $\Gamma_1 \cup \Gamma_2$ such that for every $t \in
[0,1]$ at least one of the two slits 
 is growing.
The functions
\begin{eqnarray*}
 \lambda_1(t)&:=&\frac{1}{2} \frac{d}{ds} \bigg|_{s=0} \hcap \left(
\gamma_1(0,t+s] \cup \gamma_2(0,t] \right) \, ,\\
 \lambda_2(t)&:=&\frac{1}{2} \frac{d}{ds} \bigg|_{s=0} \hcap \left(
\gamma_1(0,t] \cup \gamma_2(0,t+s] \right) \, . 
\end{eqnarray*}
are called the \textit{weight functions of the Loewner parametrization}
$(\gamma_1,\gamma_2)$. Note that $\lambda_1(t), \lambda_2(t)$ are well defined for a.e.~$t \in [0,1]$ as
derivatives of nondecreasing functions and they belong to the space $L^1[0,1]$ of
$L^1$--functions on the interval $[0,1]$.
Moreover, $0 \le \lambda_j(t) \le 1$ and
$\lambda_1(t)+\lambda_2(t)=1$ for a.e.~$t \in [0,1]$ by (ii).
Informally, $\lambda_1(t)$ and $\lambda_2(t)$ measure the speed of growth
of $\gamma_1(t)$ and $\gamma_2(t)$ w.r.t.~half--plane capacity.
If we let $g_t:=g_{\gamma_1(0,t] \cup \gamma_2(0,t]}$, then the functions
 $$ U_1(t):=g_{t}(\gamma_1(t)) \, , \qquad
U_2(t):=g_{t}(\gamma_2(t)) \, ,  $$
are called the \textit{driving functions of the Loewner parametrization}
$(\gamma_1,\gamma_2)$. As in the one--slit case, the driving functions
are continuous (see also Theorem \ref{thm:aux}).

\medskip

If $(\gamma_1, \gamma_2)$ is a Loewner parametrization, then the evolution of the family of subhulls
$\gamma_1(0,t] \cup \gamma_2(0,t]$ can be described by the
two--slit chordal Loewner equation as follows. 
 
\begin{remark}[The two--slit chordal Loewner equation] \label{thm:twoslit}
Let $\Gamma_1, \Gamma_2$ be slits such that $\Gamma_1 \cup \Gamma_2$ is a hull and
let $(\gamma_1, \gamma_2)$ be a Loewner parametrization of $\Gamma_1 \cup
\Gamma_2$ with weight functions $\lambda_1,\lambda_2$ and driving functions $U_1,U_2$.
 Then  the conformal map $g_t:=g_{\gamma_1(0,t] \cup \gamma_2(0,t]}$ is the solution of the Loewner equation
\begin{equation}  \label{eq:twoslit}
\begin{array}{rcl}
\dot{g}_t(z) &=& \displaystyle \frac{2 \lambda_1(t)}{g_t(z)-U_1(t)}+\frac{2
  \lambda_2(t)}{g_t(z)-U_2(t)} \quad \text{ for a.e. } t \in [0,1] \, , \\[4mm]
g_0(z) &=& z \, .
\end{array}
\end{equation}
\end{remark}

A  proof of Remark \ref{thm:twoslit} can be given along the lines
of the proof of Theorem \ref{slitex} in \cite{GM}. We do not give the details
here mainly because  we  need the statement of Remark \ref{thm:twoslit} only  in a very
special case, which can be deduced fairly quickly from the one--slit Loewner
equation (see Lemma \ref{lem:stepfunctions} below). In particular, 
 the proof of Theorem \ref{Charlie2} does \textit{not} depend on
Remark \ref{thm:twoslit}, but \textit{only} on Theorem \ref{satz:1}.

\medskip

Note that, in view of Remark \ref{thm:twoslit}, for proving 
the existence part of Theorem \ref{Charlie2}, we essentially have to show that every two--slit
$(\Gamma_1,\Gamma_2)$ with $\hcap(\Gamma_1 \cup \Gamma_2)=2$
has a Loewner parametrization $(\gamma_1,\gamma_2)$ with
\textit{constant} weight functions.
To this end, we arbitrarily  choose two slits $\Theta_1 \supseteq \Gamma_1$ and
$\Theta_2 \supseteq \Gamma_2$
with disjoint closures such that $\hcap(\Theta_1)=\hcap(\Theta_2)=2$,
and consider all possible Loewner
 paramaterizations of \textit{subhulls} of
$\Theta_1 \cup \Theta_2$. The key result is 
then the following theorem.

\begin{theorem} \label{thm:aux}
Let $\Theta_1, \Theta_2$ be slits with disjoint closures and 
$\hcap(\Theta_1)=\hcap(\Theta_2)=2$. Then the set
of driving functions for all Loewner parametrizations of  subhulls of
$\Theta_1 \cup\Theta_2$ is  a compact subset of the Banach space $C[0,1]$.
\end{theorem}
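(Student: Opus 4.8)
The plan is to establish three things: (a) the driving functions are uniformly bounded and equicontinuous, so by Arzel\`a--Ascoli the set is precompact in $C[0,1]$; (b) the set is closed, so it is actually compact. The uniform bound is the easy part: every subhull $A$ of $\Theta_1 \cup \Theta_2$ is contained in the fixed bounded set $\Theta_1 \cup \Theta_2$, so $U_j(t) = g_t(\gamma_j(t))$ lies in the convex hull of the real interval spanned by $g_t(\Theta_1 \cup \Theta_2)$; since $\hcap$ is monotone and bounded by $\hcap(\Theta_1 \cup \Theta_2)$, and since the diameter of $g_t(K)$ for a hull $K$ is controlled by $\diam(K)$ together with $\hcap$, one gets a uniform bound $|U_j(t)| \le M$ with $M$ depending only on $\Theta_1, \Theta_2$.

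The heart of the matter is equicontinuity. Here I would first reduce to a modulus-of-continuity estimate for a \emph{single} slit driven by the one-slit Loewner equation \eqref{ivp}, and then patch. Concretely, fix a Loewner parametrization $(\gamma_1,\gamma_2)$ of a subhull of $\Theta_1 \cup \Theta_2$ with driving functions $U_1, U_2$. On any subinterval $[s,t]$ where (say) $\lambda_1 \equiv 0$, the hull $\gamma_1(0,\cdot] \cup \gamma_2(0,\cdot]$ grows only along $\gamma_2$, so $g_{s}$ conjugates this growth to the growth of a \emph{single} slit contained in $g_s(\Theta_2)$; the classical one-slit theory (Theorem \ref{satz:1}, as in \cite{GM, Lawler:2005}) gives a modulus of continuity for $U_2$ on $[s,t]$ in terms of $t-s$ and geometric data of $g_s(\Theta_2)$, while $U_1$ jumps at most by the (small) harmonic measure / capacity increment it picks up from the conformal distortion $g_t \circ g_s^{-1}$. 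The delicate point is that $g_s(\Theta_2)$ can degenerate — its base point can come arbitrarily close to the tip of the part of $\Theta_1$ already mapped out — so the naive one-slit modulus blows up. The key observation that saves the day is that the two slits $\Theta_1, \Theta_2$ have \emph{disjoint closures}: there is a fixed $\delta_0 > 0$ with $\operatorname{dist}(\overline{\Theta_1}, \overline{\Theta_2}) \ge \delta_0$, and one shows (using distortion theorems for $g_s$, which is conformal on a neighborhood of the untouched portions of $\Theta_1, \Theta_2$) that the relevant geometric quantities of $g_s(\Theta_1), g_s(\Theta_2)$ — their diameters, base points, and mutual distance — stay in a compact range uniformly over all $s$ and all admissible parametrizations. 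This uniform non-degeneracy feeds a \emph{uniform} one-slit modulus of continuity $\omega(\cdot)$, independent of the parametrization.

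With the uniform bound and the uniform modulus $\omega$ in hand, Arzel\`a--Ascoli gives precompactness of the set $\mathcal{U} \subset C[0,1] \times C[0,1]$ (equivalently in $C[0,1]$ after pairing) of driving functions. For closedness: take $U^{(m)} = (U_1^{(m)}, U_2^{(m)}) \to U = (U_1, U_2)$ uniformly, with $U^{(m)}$ coming from Loewner parametrizations with weight functions $\lambda^{(m)} = (\lambda_1^{(m)}, \lambda_2^{(m)}) \in L^1[0,1]$, $0 \le \lambda_j^{(m)} \le 1$. By weak-$*$ compactness of the unit ball of $L^\infty[0,1]$, pass to a subsequence with $\lambda_j^{(m)} \rightharpoonup \lambda_j$; the limit still satisfies $\lambda_1 + \lambda_2 = 1$ a.e. A standard stability argument for the Loewner ODE \eqref{eq:twoslit} — using that the vector field $z \mapsto 2\lambda_1^{(m)}(t)/(z - U_1^{(m)}(t)) + 2\lambda_2^{(m)}(t)/(z-U_2^{(m)}(t))$ converges in the appropriate weak sense and is uniformly Lipschitz on compact subsets of $\H$ away from the (uniformly bounded) driving points — shows the solutions $g_t^{(m)}$ converge locally uniformly to the solution $g_t$ driven by $(\lambda, U)$. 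One then checks that $g_1$ is again of the form $g_K$ for a subhull $K$ of $\Theta_1 \cup \Theta_2$ (the generated hulls $A^{(m)}$ converge in the Carath\'eodory sense to a hull $K \subseteq \Theta_1 \cup \Theta_2$, since $A^{(m)} \subseteq \Theta_1 \cup \Theta_2$ for all $m$), and that $(\lambda, U)$ is a genuine Loewner parametrization of $K$ with $U$ as its driving functions. Hence $U \in \mathcal{U}$, so $\mathcal{U}$ is closed, and therefore compact.

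I expect the main obstacle to be exactly the uniform one-slit modulus of continuity under the threat of degeneration: one must quantify, via the Koebe distortion theorem applied to $g_s$ on fixed neighborhoods of the parts of $\Theta_1$ and $\Theta_2$ not yet swallowed by the hull, that disjointness of $\overline{\Theta_1}$ and $\overline{\Theta_2}$ propagates to a uniform lower bound on $\operatorname{dist}(g_s(\Theta_1 \setminus A_s), g_s(\Theta_2 \setminus A_s))$ and a uniform two-sided control on diameters — this is presumably the technical content deferred to Sections 2 and 3, and it is where the hypothesis ``disjoint closures'' is genuinely used.
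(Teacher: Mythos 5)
Your closedness argument and uniform bound are essentially what the paper does (weak compactness of the weight functions plus the continuous dependence of Loewner chains on driving and weight functions; a comparison with $g_{\Theta_1\cup\Theta_2}$ at the base points for the bound). The gap is in the equicontinuity step. You propose reducing to the one--slit modulus of continuity ``on any subinterval $[s,t]$ where $\lambda_1\equiv 0$'' and then patching. But Theorem~\ref{thm:aux} quantifies over \emph{all} Loewner parametrizations of subhulls, and for a general parametrization the weight functions $\lambda_1,\lambda_2$ are arbitrary $L^1$--functions with values in $[0,1]$ summing to~$1$; typically there is \emph{no} interval on which either vanishes (e.g.\ $\lambda_1\equiv\lambda_2\equiv\tfrac12$). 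So the decomposition into one--slit pieces simply isn't available, and the ``patching'' has nothing to patch. Even for the step--function parametrizations of Lemma~\ref{lem:stepfunctions}, a modulus obtained this way would a priori depend on the number and lengths of the bang--bang intervals, and one would still have to show it is uniform as that number tends to infinity.

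The paper avoids this entirely: it never touches the ODE for equicontinuity. Instead it proves a purely geometric statement (Lemma~\ref{lem:IV}) bounding $|g_{A_1\cup A_2}(a_1)-g_{B_1\cup B_2}(b_1)|$ by a fixed modulus in $|\hcap(A_1\cup A_2)-\hcap(B_1\cup B_2)|$, uniformly over all subslit pairs $(A_1,A_2)$, $(B_1,B_2)$ of $(\Theta_1,\Theta_2)$. Since a Loewner parametrization has $\hcap(\gamma_1(0,t]\cup\gamma_2(0,t])=2t$, this immediately gives a parametrization--independent modulus for $t\mapsto U_j(t)=g_t(\gamma_j(t))$. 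The technical content is a chain of estimates via the Nevanlinna representation of $g_A^{-1}$ (expanding/nonexpanding behaviour of $g_A$ on $\R$, Lemmas~\ref{lem:hcapneu}--\ref{lem:II}), a one--slit radius bound (Lemma~\ref{lem:III}), and crucially the lower capacity bound of Lemma~\ref{hcap2new}, obtained by globalizing the Lawler--Schramm--Werner derivative formula via compactness. Your intuition that disjoint closures are what prevents degeneration is correct --- that is exactly what makes the constant $L^{-1}=g^-_{\Theta_1\cup\Theta_2}(p_2)-g^+_{\Theta_1\cup\Theta_2}(p_1)$ in Lemma~\ref{lem:II} and the constant $c$ in Lemma~\ref{hcap2new} strictly positive --- but the mechanism is capacity estimates, not Koebe distortion of $g_s$ on shrinking neighbourhoods of the remaining slits (which, as you note yourself, threatens to degenerate when the remaining piece is small). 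As written, your sketch does not yield a uniform modulus for arbitrary weight functions, so the precompactness claim is not established.
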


The proof of Theorem \ref{thm:aux} is divided into two parts.
In this section, we show that the driving functions in Theorem \ref{thm:aux}
form a closed subset of $C[0,1]$, and we defer the more difficult proof of precompactness to
Section 3.  

\medskip

We shall need the following partial converse of Remark \ref{thm:twoslit},
which is actually a special  case of Theorem 4.6 in \cite{Lawler:2005}.

\begin{lemma} \label{lem:q}
 Let $\lambda_1,\lambda_2 \in L^1[0,1]$ with
 $0 \le \lambda_j(t) \le 1$ and $\lambda_1(t)+\lambda_2(t)=1$ for
a.e.~$t \in [0,1]$, and let $U_1,U_2 \in C[0,1]$. 
For every $z \in \H$ let $T_z$ be the supremum of all $t \in [0,1]$ such that the
solution $g_t(z)$ of the initial value problem (\ref{eq:twoslit}) is well
defined up to time $t$ with $g_t(z) \in \H$. Let $H_t:=\{z \in \H \, : \, T_z>t\}$.
Then $g_t$ is the unique conformal map from $H_t$ onto $\H$ such that
$g_t(z)=z+2t/z+O(1/|z|^2)$ as $z \to \infty$. 
\end{lemma}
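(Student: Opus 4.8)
The plan is to treat Lemma \ref{lem:q} as a standard fact about Loewner chains generated by integrable driving data, following the template of Theorem 4.6 in \cite{Lawler:2005}. First I would fix $z\in\H$ and analyze the initial value problem (\ref{eq:twoslit}) as an ODE with right-hand side $F(t,w)=\sum_j 2\lambda_j(t)/(w-U_j(t))$. Since $\lambda_j\in L^1[0,1]$ and $U_j\in C[0,1]$, the map $t\mapsto F(t,w)$ is integrable for each fixed $w$ bounded away from $\{U_1(t),U_2(t)\}$, and $w\mapsto F(t,w)$ is locally Lipschitz there; hence by the Carath\'eodory existence and uniqueness theorem the solution $t\mapsto g_t(z)$ exists, is absolutely continuous, and is unique on a maximal interval $[0,T_z)$, with $T_z$ characterized exactly as the blow-up/boundary-collision time in the statement. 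I would then note that $\Im g_t(z)$ is nonincreasing (since $\Im F(t,w)=-\sum_j 2\lambda_j(t)\,\Im w/|w-U_j(t)|^2\le 0$), so $g_t(z)$ stays in $\H$ as long as it exists, and that $T_z$ is lower semicontinuous in $z$, making $H_t$ open.

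Next I would establish that $g_t$ is injective and holomorphic on $H_t$. Holomorphy in $z$ follows from smooth (here, absolutely continuous in $t$, holomorphic in $z$) dependence on initial conditions for the Carath\'eodory ODE: differentiating (\ref{eq:twoslit}) formally in $z$ gives a linear integral equation for $g_t'(z)$ whose solution never vanishes, so $g_t$ is locally injective; global injectivity comes from the standard argument that two distinct initial points $z_1\ne z_2$ have $g_t(z_1)-g_t(z_2)$ satisfying a linear ODE and hence can never collide. Then $g_t(H_t)$ is an open subset of $\H$, and I would identify it with all of $\H$ by a reversibility argument: running the (backward) Loewner equation from any $w\in\H$ at time $t$ produces a solution that stays in $\H$ on all of $[0,t]$, landing at some $z\in H_t$ with $g_t(z)=w$.

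For the normalization, I would examine the behavior as $z\to\infty$. For $|z|$ large the solution stays large and bounded away from the driving points on all of $[0,1]$, and integrating (\ref{eq:twoslit}) gives
\[
g_t(z)=z+\int_0^t\sum_{j=1}^2\frac{2\lambda_j(s)}{g_s(z)-U_j(s)}\,ds
= z+\frac{2t}{z}+\LandauO(|z|^{-2}),
\]
using $\lambda_1(s)+\lambda_2(s)=1$ a.e.\ and $g_s(z)=z+\LandauO(1)$ uniformly in $s$; the error estimate is routine from a Gr\"onwall-type bound. Finally, uniqueness of the conformal map $H_t\to\H$ with this hydrodynamic normalization is the usual Schwarz-lemma/removable-singularity argument: the difference of two such maps, conjugated appropriately, extends to a conformal automorphism of $\H$ fixing $\infty$ with derivative $1$ there and the right next coefficient, forcing it to be the identity.

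The main obstacle is not any single step but the bookkeeping needed to run the Carath\'eodory theory cleanly with merely $L^1$ weights: one must be careful that $T_z$ is precisely the first time $g_t(z)$ either escapes to $\infty$ or its imaginary part degenerates toward a driving point, that the solution is genuinely absolutely continuous rather than merely continuous, and that surjectivity onto $\H$ holds for the full interval. Since the lemma is explicitly quoted as a special case of \cite[Theorem 4.6]{Lawler:2005}, I would in the actual write-up simply invoke that reference after noting that the hypotheses there (measurable driving term, finite capacity $2t$) are met here, rather than reproving the whole machinery.
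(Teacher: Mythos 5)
Your approach matches the paper's exactly: the paper gives no proof of Lemma~\ref{lem:q} and simply cites it as a special case of Theorem~4.6 in \cite{Lawler:2005}, which is precisely what you conclude by invoking that reference after checking the hypotheses. Your preliminary sketch of the Carath\'eodory ODE argument, injectivity via the linear equation for $g_t(z_1)-g_t(z_2)$, surjectivity via the time-reversed equation, and the hydrodynamic normalization using $\lambda_1+\lambda_2=1$ a.e.\ is a correct account of what lies behind Lawler's theorem, so no gap.
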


We call $g_t$ the Loewner chain
associated to the weight functions $\lambda_1,\lambda_2$ and the driving
functions $U_1,U_2$. The following result  shows that the Loewner chain $g_t$ depends continuously
on its weight functions and its driving functions, provided we choose the
appropriate topologies. Recall that a sequence of Loewner chains $g^{(n)}_t$
 is said to converge to the Loewner chain $g_t$ with domain $H_t$ in the Carath\'eodory sense, if for every
 $\eps>0$, $g^{(n)}_t$ converges to $g_t$ uniformly on $[0,1] \times \{z \in
 \H \, : \, \text{dist}(z,K_1) \ge \eps \}$, where $K_1$ is the closure of $\H
 \backslash H_1$, see \cite[\S 4.7]{Lawler:2005}.

\begin{theorem}[Continuous dependence of Loewner chains] \label{thm:control}
For $j\in\{1,2\}$ let $\lambda^{(n)}_{j}, \lambda_{j} \in L^1[0,1]$ be weight
functions and let $U^{(n)}_{j},
U_{j} \in C[0,1]$  be driving functions with associated Loewner
chains  $g^{(n)}_{t}$, $g_t$.
If $U^{(n)}_j$ converges to $U_j$ uniformly on $[0,1]$ and if
$\lambda^{(n)}_j$ converges weakly in $L^1[0,1]$ to $\lambda_j$  for $j=1,2$, then
$g^{(n)}_t$ converges in the Carath\'eodory sense to the chain $g_t$.
\end{theorem}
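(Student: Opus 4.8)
The plan is to reduce the Carathéodory convergence of the Loewner chains to a pointwise statement about the ODE solutions $t \mapsto g_t^{(n)}(z)$ for each fixed $z$ in the relevant region, and then upgrade pointwise convergence to the required uniform convergence by an equicontinuity/Arzelà--Ascoli argument. First I would fix $\eps>0$ and work on the compact set $K_\eps := [0,1]\times\{z\in\H : \operatorname{dist}(z,K_1)\ge\eps\}$, noting that by Lemma \ref{lem:q} the map $g_1$ is conformal on $H_1$, so on a slightly smaller neighbourhood the solutions $g_t(z)$ stay bounded away from the singular set $\{U_1(t),U_2(t)\}$; the same must be shown \emph{uniformly in $n$} for $g_t^{(n)}(z)$, which is where the bulk of the work lies. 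The standard device here is to compare with the imaginary part: $\frac{d}{dt}\Im g_t(z) = -\sum_j 2\lambda_j(t)\,\Im g_t(z)/|g_t(z)-U_j(t)|^2 \le 0$, and to bound $\Im g_t(z)$ from below and the whole expression $|g_t(z)-U_j(t)|$ from below on $K_\eps$, using only that $\sum_j\lambda_j\equiv 1$, that the $U_j^{(n)}$ are uniformly bounded (being a convergent sequence in $C[0,1]$), and that $\hcap$ is controlled by $2t$. This yields a uniform-in-$n$ lower bound $\delta=\delta(\eps)>0$ on $|g_t^{(n)}(z)-U_j^{(n)}(t)|$ for $(t,z)$ in a suitable compact set, hence a uniform Lipschitz bound in $t$ on $g_t^{(n)}(z)$, and also uniform boundedness.

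With these uniform estimates in hand, the second step is to pass to the limit. By Arzelà--Ascoli the family $\{(t,z)\mapsto g_t^{(n)}(z)\}_n$ is precompact in $C(K_\eps)$; I would show every subsequential limit equals $g_t$, which forces the whole sequence to converge. To identify the limit, rewrite the ODE in integrated form,
\[
 g_t^{(n)}(z) = z + \int_0^t \sum_{j=1}^2 \frac{2\lambda_j^{(n)}(s)}{g_s^{(n)}(z)-U_j^{(n)}(s)}\,ds,
\]
and let $n\to\infty$ inside the integral. The term $1/(g_s^{(n)}(z)-U_j^{(n)}(s))$ converges uniformly in $s$ to $1/(g_s(z)-U_j(s))$ because $g_s^{(n)}\to g_s$ uniformly (along the subsequence), $U_j^{(n)}\to U_j$ uniformly, and the denominators are bounded below; combined with the weak $L^1$ convergence $\lambda_j^{(n)}\rightharpoonup\lambda_j$, the product $\lambda_j^{(n)}(s)\cdot\big[\text{uniformly convergent factor}\big]$ integrates to the expected limit (weak convergence against a uniformly convergent, hence norm-convergent in $C[0,1]\subset L^\infty$, sequence of test functions). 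Thus the limit satisfies the integrated form of (\ref{eq:twoslit}) with data $\lambda_j,U_j$, and by the uniqueness in Lemma \ref{lem:q} it must be $g_t$.

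Finally I would assemble the pieces: the convergence $g_t^{(n)}\to g_t$ is uniform on $K_\eps$ for every $\eps>0$, which is precisely the definition of Carathéodory convergence recalled before the statement. One bookkeeping point requires care — the domains $H_t^{(n)}$ of the $g_t^{(n)}$ a priori differ from $H_t$, so I must first check that for $n$ large the set $\{z : \operatorname{dist}(z,K_1)\ge\eps\}$ lies in $H_1^{(n)}$; this follows from the same uniform lower bound on $\Im g_t^{(n)}(z)$ that prevents blow-up, i.e. $T_z>1$ uniformly in $n$ on that set. The main obstacle, and the step deserving the most care, is the uniform-in-$n$ non-degeneracy estimate $|g_t^{(n)}(z)-U_j^{(n)}(t)|\ge\delta(\eps)$ on compacta: everything else (Arzelà--Ascoli, passing to the limit in the integral equation, invoking uniqueness) is routine once that is established, but that estimate is exactly the "fair amount of technical work" the authors allude to and must be done honestly using the capacity bound and the structure of the vector field rather than any a priori regularity of the driving functions.
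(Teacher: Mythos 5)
Your overall architecture — establish a uniform-in-$n$ lower bound on $|g_t^{(n)}(z)-U_j^{(n)}(t)|$, use it to get equicontinuity, extract subsequential limits by Arzel\`a--Ascoli, identify every limit with $g_t$ through the integral equation plus uniqueness, and conclude — is a genuinely different route from the paper. The authors instead run a direct Gronwall--Bellman estimate on $h_n(t)=g_t(z)-g_t^{(n)}(z)$ on the set $V_\delta=\{z:|g_t(z)-U_j(t)|>\delta\ \forall t,j\}$, which produces quantitative convergence in one stroke and never invokes compactness or uniqueness of ODE solutions. Both routes, however, are carried by the same nondegeneracy estimate, and that is where your proposal has a genuine gap.

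The gap is in the claimed derivation of the uniform lower bound $|g_t^{(n)}(z)-U_j^{(n)}(t)|\ge\delta(\eps)$. You propose to get it by monotonicity of $\Im g_t^{(n)}(z)$ plus uniform bounds on $U_j^{(n)}$ and half--plane capacity. But the ODE for the imaginary part reads
\[
\frac{d}{dt}\Im g_t^{(n)}(z)\;=\;-\,\Im g_t^{(n)}(z)\sum_{j}\frac{2\lambda_j^{(n)}(t)}{\left|g_t^{(n)}(z)-U_j^{(n)}(t)\right|^2},
\]
so a lower bound on $\Im g_t^{(n)}(z)$ already requires the very lower bound on $\left|g_t^{(n)}(z)-U_j^{(n)}(t)\right|$ you are trying to establish: the imaginary part can decay to zero in finite time exactly when the orbit approaches the driving point, and neither $\sum_j\lambda_j^{(n)}\equiv 1$, nor $\sup_n\|U_j^{(n)}\|_\infty<\infty$, nor the $\hcap$ normalization rules this out a priori. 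Near the real line (points $z$ with small $\Im z$ but $\operatorname{dist}(z,K_1)\ge\eps$) the situation is worse still, because there the needed estimate for the limit chain itself comes from Schwarz reflection across $\R\setminus\overline{K_1}$, and transplanting it to $g_t^{(n)}$ would require knowing $K_t^{(n)}$ is already close to $K_t$ — exactly the conclusion. The paper resolves this circularity with a bootstrap: it introduces the stopping time $\sigma=\sigma_{n,\delta,z}$, the first time $|g_s^{(n)}(z)-g_s(z)|\ge\delta/4$; for $t\le\sigma$ the triangle inequality gives $|g_t^{(n)}(z)-U_j^{(n)}(t)|\ge\delta/2$ for free, the Gronwall estimate then shows $|g_t^{(n)}(z)-g_t(z)|<\delta/4$ for $t\le\min\{\sigma,1\}$, and this forces $\sigma\ge 1$. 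You would need to incorporate exactly such a stopping-time device to make your Step 1 go through; once it does, the Arzel\`a--Ascoli/uniqueness part of your argument is sound (including the weak-$L^1$ against uniformly convergent integrand step), but at that point you have already done the paper's Gronwall work, so the compactness route buys nothing beyond it here.
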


\begin{remark}
Theorem \ref{thm:control} generalizes Proposition 4.47 in \cite{Lawler:2005},
which deals with the one--slit version of Loewner's equation.
The idea of the statement and the proof of Theorem \ref{thm:control} comes from a
 standard result in linear control theory (see \cite[p.~117]{Jur97})  by thinking of the weight functions
 as ``control functions''.
\end{remark}

\begin{proof}[Proof of Theorem \ref{thm:control}]
For every $\delta>0$, let
$$ V_{\delta}:=\{z \in \H \, : \, |g_t(z)-U_j(t)|> \delta \text{ for } 0 \le t
\le 1, \, j=1,2\} \, .$$
Then $V_{\delta}$ is an open subset of $H_1$. As in
\cite[p.~115]{Lawler:2005}, it suffices to show that $g^{(n)}_t$ converges to
$g_t$ uniformly on $[0,1] \times V_{\delta}$.

\smallskip

We first need to establish a number of technical, but crucial estimates.

\smallskip
(i) Let
$$ \alpha_n(t,z):=\int \limits_{0}^t \left[ \frac{2
    (\lambda_1(s)-\lambda_1^{(n)}(s))}{g_s(z)-U_1(s)}+\frac{2
    (\lambda_2(s)-\lambda_2^{(n)}(s))}{g_s(z)-U_2(s)} \right] \, ds \, .$$
Since $\lambda^{(n)}_j$ converges weakly to $\lambda_j$, we have $\alpha_n(t,z) \to 0$ as
$n \to \infty$ pointwise on $[0,1] \times \H$. In fact, this
convergence is uniform on $[0,1] \times V_{\delta}$, since the sequence
$(\alpha_n)$ is equicontinuous there. This follows from 
\begin{eqnarray*}
|\alpha_n(t,z) \hspace*{-0.3cm}&-&\hspace*{-0.3cm} \alpha_n(t',z')|  \le 
|\alpha_n(t,z)-\alpha_n(t,z')|+|\alpha_n(t,z')-\alpha_n(t',z')| \\
&\le &  \int \limits_0^t \left[\frac{2 |g_s(z)-g_s(z')|}{|g_s(z)-U_1(s)| \,
  |g_s(z')-U_1(s)|} +\frac{2 |g_s(z)-g_s(z')|}{|g_s(z)-U_2(s)| \,
  |g_s(z')-U_2(s)|} \right]\, ds
\\ 
& & \qquad 
+ \left| \int \limits_{t'}^{t} \left[ \frac{2
      (\lambda_1(s)-\lambda_1^{(n)}(s))}{g_s(z')-U_1(s)}+\frac{2
      (\lambda_s(s)-\lambda_2^{(n)}(s))}{g_s(z')-U_2(s)} \right] \, ds
\right|\\
& \le & \frac{4}{\delta^2} \int \limits_{0}^t | g_s(z)-g_s(z')| \,
ds+\frac{4}{\delta} \, |t-t'| 
\end{eqnarray*}
for all $t,t' \in [0,1]$ and $z,z' \in V_{\delta}$.

\smallskip

(ii) Let $$\beta_n:=\max\left\{|U_j(s)-U^{(n)}_j(s)| \, : \, s \in [0,1], \,
j=1,2\right\} \, , $$ so $\beta_n \to 0$ as $n \to \infty$ by assumption. Let $\eps>0$
with $\eps<\delta/4$. Then there is a positive integer $N$ such
that $\beta_n<\delta/4$ for all $n \ge N$. Since $\alpha_n \to 0$ uniformly on
$[0,1] \times V_{\delta}$ by (i), we may assume by enlarging $N$ if necessary that
\begin{equation} \label{eq:w1}
|\alpha_n(t,z)|+\frac{8 \beta_n}{\delta^2}+\frac{8}{\delta^2} \int \limits_0^t
\left( |\alpha_n(s,z)|+\frac{8\beta_n}{\delta^2} \right) e^{8 (t-s)/\delta^2}
\, ds < \eps
\end{equation}
for all $n \ge N$ and all $(t,z) \in [0,1] \times V_{\delta}$.

\smallskip

(iii) Let $z \in V_{\delta}$ and let $\sigma=\sigma_{n,\delta,z}$ be the first
time $s \ge 0$ such that $|g^{(n)}_s(z)-g_s(z)| \ge \delta/4$. 
If $0 \le t \le \min \{\sigma,1\}$, then for all $n \ge N$,
$$ |g_t^{(n)}(z)-U^{(n)}_j(t)| \ge
|g_t(z)-U_j(t)|-|g^{(n)}_t(z)-g_t(z)|-|U_j(t)-U_j^{(n)}(t)| \ge \delta/2 \, .$$

\smallskip

We are now in a position to show that $g_t^{(n)}$ converges to $g_t$ uniformly
on $[0,1] \times V_{\delta}$.
Let
$h_n(t):=g_t(z)-g^{(n)}_t(z)$. Then
\begin{eqnarray*}
& & \hspace*{-0.5cm}|h_n(t)|= |g_t(z)-g_t^{(n)}(z)|\\[1mm]
& & \hspace*{-0.2cm}
 =  \left| \int \limits_{0}^t \left[ \frac{2
      \lambda_1(s)}{g_s(z)-U_1(s)}+\frac{2
      \lambda_2(s)}{g_s(z)-U_2(s)}-\frac{2
      \lambda_1^{(n)}(s)}{g^{(n)}_s(z)-U^{(n)}_1(s)}-\frac{2
      \lambda_2^{(n)}(s)}{g^{(n)}_s(z)-U^{(n)}_2(s)} \right] \, ds \right|\\[2mm]
& & \hspace{-0.2cm} \le \left| \int \limits_{0}^t  \left( \frac{2 \left(
      \lambda_1(s)-\lambda_1^{(n)}(s) \right)}{g_s(z)-U_1(s)}+ \frac{2 \left(
      \lambda_2(s)-\lambda_2^{(n)}(s) \right)}{g_s(z)-U_2(s)} \right) \, ds
\right| \\[1mm]
 & & \hspace*{2.0cm}  +  \int \limits_0^t \left| \frac{2 \lambda_1^{(n)}(s)}{g_s(z)-U_1(s)}-
   \frac{2 \lambda_1^{(n)}(s)}{g_s^{(n)}(z)-U_1^{(n)}(s)}\right| \, ds \\[1mm]
 & & \hspace*{2.0cm} 
+ \int
 \limits_{0}^t \left| \frac{2 \lambda_2^{(n)}(s)}{g_s(z)-U_2(s)}-
   \frac{2 \lambda_2^{(n)}(s)}{g_s^{(n)}(z)-U_2^{(n)}(s)} \right| \, ds \\[2mm]
    & &  \hspace*{-0.2cm} \le |\alpha_n(t,z)| 
+ 2 \int \limits_{0}^t 
\frac{\left|h_n(s)\right|+\left|U_1(s)-U_1^{(n)}(s)\right|}{\big|g_s(z)-U_1(s)\big|
  \, \left|g_s^{(n)}(z)-U_1^{(n)}(s)\right|}
\, ds \\[1mm] & &  \hspace*{2.0cm}
+ \, 2 \int \limits_{0}^t 
\frac{\left|h_n(s,z)\right|+\left|U_2(s)-U_2^{(n)}(s)\right|}{\big|g_s(z)-U_2(s)\big|
  \, \left|g_s^{(n)}(z)-U_2^{(n)}(s)\right|}
\, ds \, .\\[1mm]
\end{eqnarray*}
Therefore, we have for all $0 \le t \le \min \{\sigma,1\}$ and every $n \ge N$
in view of of (ii) and (iii), 
\begin{eqnarray*}
|h_n(t)|   \le |\alpha_n(t,z)|+\frac{8 \beta_n}{\delta^2}+\frac{8}{\delta^2} \int
\limits_{0}^t |h_n(s)| \, ds \, .
\end{eqnarray*}
The Gronwall lemma \cite[p.~198]{FR75} shows that this estimate implies
$$ |h_n(t)| \le |\alpha_n(t,z)|+\frac{8 \beta_n}{\delta^2}+\frac{8}{\delta^2}
\int \limits_0^t \left[ |\alpha_n(s,z)|+\frac{8\beta_n}{\delta^2} \right] e^{8
  (t-s)/\delta^2} \, ds \, .$$
Hence, in view of (\ref{eq:w1}), we get $|h_n(t)|<\eps<\delta/4$ for all $0
\le t \le \min\{\sigma,1\}$ and every $n \ge N$. In particular, $\sigma \ge 1$, so we have for all
$n \ge N$
$$ |g^{(n)}_t(z)-g_t(z)|=|h_n(t)| <\eps \, , \qquad z \in V_{\delta}, \, t \in [0,1] \, .$$
This completes the proof of Theorem \ref{thm:control}.
\end{proof}

\begin{proof}[Proof of Theorem \ref{thm:aux} I: Closedness]
Let $U^{(n)}_j \in C[0,1]$ be  driving functions for Loewner
parametrizations of  subhulls of $\Theta_1 \cup \Theta_2$, and assume
that $U^{(n)}_j \to U_j$ uniformly on $[0,1]$ for $j=1,2$. Let
$\lambda^{(n)}_j \in L^1[0,1]$ be the corresponding weight functions
and $g^{(n)}_t$ the associated Loewner chains.
As the set of functions in $L^1[0,1]$ with values (a.e.) in the
interval $[0,1]$ is a weakly compact subset of $L^1[0,1]$, we can assume that $\lambda^{(n)}_j$ converges
weakly to some $\lambda_j \in L^1[0,1]$, where
$0 \le \lambda_j(t) \le 1$ and $\lambda_1(t)+\lambda_2(t)=1$ for a.e.~$t \in
[0,1]$. Let $g_t$ be the Loewner chain associated to $\lambda_1,\lambda_2$ and
$U_1, U_2$. By Theorem \ref{thm:control}, $g^{(n)}_t \to g_t$ in the
Carath\'eodory sense. Let $H^{(n)}_t$ and $H_t$ be the domains of $g^{(n)}_t$
and $g_t$. Since  the sets $\H \backslash H^{(n)}_t$  are subhulls
of $\Theta_1 \cup \Theta_2$, also $K_t:=\H \backslash H_t$  is a subhull of
$\Theta_1 \cup \Theta_2$, so $K_t=\H \backslash (\gamma_1(0,t] \cup
\gamma_2(0,t])$, where $g_t(\gamma_j(t))=U_j(t)$. Clearly,
$(\gamma_1,\gamma_2)$ is a Loewner parametrization of the subhull $K_1$
of $\Theta_1 \cup \Theta_2$ with driving functions $U_1,U_2$.
\end{proof}

\section{Capacity estimates and proof of Theorem \ref{thm:aux}.}

Let $\Theta_1, \Theta_2$ be slits with disjoint closures and 
$\hcap(\Theta_1)=\hcap(\Theta_2)=2$. 
In this section we will finish the proof of Theorem \ref{thm:aux} by showing that  the set
of driving functions for all Loewner parametrizations of subhulls of
$\Theta_1 \cup\Theta_2$ is  a precompact subset of the Banach space $C[0,1]$. 
This requires a number of technical estimates for the half--plane capacities of
two--slits and their subhulls.

\medskip

We start with the following lemma,
 which describes a number of well--known, but essential properties of half--plane capacity.
For a geometric interpretation of half--plane capacity, we refer to \cite{MR2576752,RW}.

\begin{lemma}\label{hcap}
 Let $A_1,A_2$ be hulls.
\begin{itemize}
 \item[(a)] If $A_1\cup A_2$ and $A_1\cap A_2$ are hulls, then $$\hcap(A_1)+\hcap(A_2)\geq \hcap(A_1\cup A_2)+\hcap(A_1\cap A_2).$$
\item[(b)] If  $A_1\subset A_2,$ then $\hcap(A_2)=\hcap(A_1)+\hcap(g_{A_1}(A_2\setminus A_1))\geq\hcap(A_1).$
\item[(c)] If  $A_1 \cup A_2$ is a hull and $A_1 \cap A_2=\emptyset$, then
$\hcap(g_{A_1}(A_2)) \leq \hcap(A_2).$
\end{itemize}
\end{lemma}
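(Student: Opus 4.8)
The plan is to isolate two ingredients. The first is the composition rule for hydrodynamically normalized maps, which gives (b) at once and, together with (a), also yields (c). The second is the representation of half--plane capacity through a bounded harmonic function on $\H\setminus A$, which is what makes (a) work. For (b): since $A_1\subseteq A_2$, the map $g_{A_1}$ restricts to a conformal bijection of $\H\setminus A_2$ onto $\H\setminus C$, where $C:=g_{A_1}(A_2\setminus A_1)$ is again a hull; by uniqueness of the hydrodynamic normalization, $g_{A_2}=g_C\circ g_{A_1}$. Substituting $g_{A_1}(z)=z+\hcap(A_1)/z+\LandauO(|z|^{-2})$ and $g_C(w)=w+\hcap(C)/w+\LandauO(|w|^{-2})$ and comparing the coefficients of $1/z$ gives $\hcap(A_2)=\hcap(A_1)+\hcap(C)$, and $\hcap(C)\ge 0$ finishes (b). The only nontrivial point is the standard fact that $C$ is a hull, i.e.\ that $g_{A_1}$ carries the part of $\overline{A_2}$ lying in $\H\setminus A_1$ to a bounded set whose complement in $\H$ is simply connected.

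For (a), to each hull $A$ associate $v_A(z):=\Im z-\Im g_A(z)$ on $\H\setminus A$. From $\Im g_A\ge 0$ one gets $0\le v_A\le\Im(\cdot)$, and the hydrodynamic expansion gives $v_A(iy)=\hcap(A)/y+\LandauO(y^{-2})$, hence $\hcap(A)=\lim_{y\to\infty}y\,v_A(iy)$. Moreover $v_A$ is bounded and harmonic on $\H\setminus A$, tends to $0$ at $\infty$ and on $\R\setminus\overline A$, and has boundary value $\Im(\cdot)$ on the prime ends of $\H\setminus A$ over $\partial A$; equivalently $v_A(z)=\mathbb{E}_z[\Im B_\tau]$ for a complex Brownian motion $B$ from $z$ and its first exit time $\tau$ from $\H\setminus A$. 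I would first record the \emph{monotonicity}: if $A\subseteq A'$ are hulls then $v_A\le v_{A'}$ on $\H\setminus A'$, because $v_{A'}-v_A$ is bounded harmonic there, vanishes at $\infty$ and on $\R\setminus\overline{A'}$, and on the prime ends over $\partial A'$ has boundary value $\Im(\cdot)-v_A\ge0$; the maximum principle does the rest.

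Now set $w:=v_{A_1}+v_{A_2}-v_{A_1\cup A_2}$ on $\Omega:=\H\setminus(A_1\cup A_2)$; this is bounded harmonic on $\Omega$ and decays to $0$ at $\infty$. I claim $w\ge v_{A_1\cap A_2}$ on $\Omega$ (the latter restricted from its domain $\H\setminus(A_1\cap A_2)\supseteq\Omega$). On $\partial\Omega$: both sides vanish on $\R$; at a boundary point of $A_1\cup A_2$ not lying in $\overline{A_1}$ one has $v_{A_2}=v_{A_1\cup A_2}=\Im(\cdot)$, so $w=v_{A_1}\ge v_{A_1\cap A_2}$ by monotonicity, and symmetrically for points not in $\overline{A_2}$; at the remaining boundary points, which lie in $\overline{A_1}\cap\overline{A_2}=\overline{A_1\cap A_2}$, all four functions equal $\Im(\cdot)$, so $w=v_{A_1\cap A_2}$ there. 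The maximum principle on $\Omega$ then gives $w\ge v_{A_1\cap A_2}$ on $\Omega$; evaluating at $z=iy$, multiplying by $y$ and letting $y\to\infty$ gives (a). Finally (c) follows by combining (a) and (b): applying (b) to $A_1\subseteq A_1\cup A_2$ and using $A_1\cap A_2=\emptyset$, so $(A_1\cup A_2)\setminus A_1=A_2$, gives $\hcap(A_1\cup A_2)=\hcap(A_1)+\hcap(g_{A_1}(A_2))$, while (a) with $\hcap(A_1\cap A_2)=0$ gives $\hcap(A_1)+\hcap(A_2)\ge\hcap(A_1\cup A_2)$; subtracting $\hcap(A_1)$ yields $\hcap(g_{A_1}(A_2))\le\hcap(A_2)$.

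The substance is all in (a). The arithmetic is routine, but the boundary--value bookkeeping for the functions $v_A$ is delicate precisely when $A$ is a slit, so that $\partial A$ must be read as a two--sided prime--end boundary, and in the step matching $\overline{A_1}\cap\overline{A_2}$ with $\overline{A_1\cap A_2}$; the hypotheses that both $A_1\cup A_2$ and $A_1\cap A_2$ are hulls are exactly what legitimizes this. I expect the cleanest way to avoid these subtleties is to run the comparison probabilistically instead: for a Brownian motion started at $z\in\Omega$, let $T_1,T_2$ be the exit times of $\H\setminus A_1$, $\H\setminus A_2$ and $T_\cap$ that of $\H\setminus(A_1\cap A_2)$, note $T_\cap\ge T_1\vee T_2$, condition on which of $A_1,A_2$ is met first, and apply the strong Markov property at the later of $T_1,T_2$; combined with the elementary bound $v_{A_1\cap A_2}\le\Im(\cdot)$ this gives $v_{A_1\cap A_2}(z)+v_{A_1\cup A_2}(z)\le v_{A_1}(z)+v_{A_2}(z)$ on $\Omega$ with no discussion of boundary values, after which one passes to capacities as above.
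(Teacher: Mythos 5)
Your proof is correct, and the derivation of (c) from (a) and (b) is word-for-word the one the paper uses. For (a) and (b) the paper simply cites Lawler's book (Prop.~3.42 and Lemma~3.45 on p.~71), and what you supply is precisely the standard argument given there: (b) by the composition rule $g_{A_2}=g_C\circ g_{A_1}$ and matching $1/z$-coefficients, and (a) via the bounded harmonic function $v_A=\Im(\cdot)-\Im g_A$ (equivalently $v_A(z)=\mathbb{E}_z[\Im B_\tau]$), monotonicity, and the maximum principle, with the Brownian phrasing available to sidestep prime-end bookkeeping. One small imprecision: the identity $\overline{A_1}\cap\overline{A_2}=\overline{A_1\cap A_2}$ you invoke at the remaining boundary points need not hold in general, but this does not harm your argument, since at such a point $w=\Im(\cdot)$ anyway while $v_{A_1\cap A_2}\le\Im(\cdot)$ always, so the desired inequality $w\ge v_{A_1\cap A_2}$ still holds on all of $\partial\Omega$.
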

\begin{proof} For (a) and (b) see \cite[p.~71]{Lawler:2005}. 
Now let $A_1 \cup A_2$ be a hull such that $A_1 \cap A_2=\emptyset$.
Then  (b) implies
$\hcap(A_1)+\hcap(g_{A_1}(A_2))=\hcap(A_1\cup A_2)$, while
(a) shows $\hcap(A_1\cup A_2) \le \hcap(A_1)+\hcap(A_2)$. This proves (c).
\end{proof}

Next we prove a refinement of Lemma \ref{hcap} (c) when the hulls are slits.

\begin{lemma} \label{hcap2new}
Let $\Theta_1$ and $\Theta_2$ be slits with disjoint closures. Then
there is a constant $c>0$ such that
$$ c \le 
\frac{\hcap(B_1 \cup \Theta_2)-\hcap
    (A_1 \cup \Theta_2)}{\hcap(B_1)-\hcap(A_1)} \, $$
for all subslits $A_1 \subsetneq B_1 \subseteq \Theta_1$.
\end{lemma}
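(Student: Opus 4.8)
The plan is to estimate the numerator $\hcap(B_1 \cup \Theta_2) - \hcap(A_1 \cup \Theta_2)$ from below in terms of the denominator $\hcap(B_1) - \hcap(A_1)$ by repeatedly applying Lemma~\ref{hcap}(b), which turns capacity differences into capacities of conformal images. Applying part (b) to the inclusion $A_1 \cup \Theta_2 \subseteq B_1 \cup \Theta_2$ gives
\[
\hcap(B_1 \cup \Theta_2) - \hcap(A_1 \cup \Theta_2) = \hcap\bigl(g_{A_1 \cup \Theta_2}\bigl((B_1 \cup \Theta_2) \setminus (A_1 \cup \Theta_2)\bigr)\bigr) = \hcap\bigl(g_{A_1 \cup \Theta_2}(B_1 \setminus A_1)\bigr),
\]
using that $\Theta_2$ is disjoint from $\Theta_1 \supseteq B_1$. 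Similarly the denominator equals $\hcap(g_{A_1}(B_1 \setminus A_1))$. So the task reduces to comparing the capacities of the images of the same set $B_1 \setminus A_1$ under two different maps, $g_{A_1 \cup \Theta_2}$ and $g_{A_1}$.

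The key observation is that $g_{A_1 \cup \Theta_2} = g_{g_{A_1}(\Theta_2)} \circ g_{A_1}$ on $\H \setminus (A_1 \cup \Theta_2)$, by uniqueness of the hydrodynamically normalized map (composition of two such maps is again one). Writing $E := g_{A_1}(B_1 \setminus A_1)$ (a hull in $\H$) and $\Xi := g_{A_1}(\Theta_2)$ (a slit in $\H$ whose closure is disjoint from that of $E$, since $\overline{\Theta_2}$ is a compact set disjoint from $\overline{\Theta_1}$ and $g_{A_1}$ extends continuously and injectively there), we must show
\[
\hcap\bigl(g_\Xi(E)\bigr) \ge c \,\hcap(E)
\]
with $c>0$ independent of $E$ — but here one has to be careful that $E$ ranges over a family that is \emph{not} a priori bounded away from $\Xi$ uniformly; however, as $A_1 \subseteq \Theta_1$ varies, the slits $\Xi = g_{A_1}(\Theta_2)$ vary too, so I would instead prove a more uniform statement: there is $c>0$, depending only on $\Theta_1, \Theta_2$, such that $\hcap(g_\Xi(E)) \ge c\,\hcap(E)$ for all $A_1 \subseteq \Theta_1$, all relevant $E$. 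The natural route is via the probabilistic/harmonic-measure formula for half-plane capacity: $\hcap(K) = \lim_{y\to\infty} y\,\mathbb{E}^{iy}[\Im B_{\tau_K}]$ (Brownian motion stopped on hitting $K$), together with the fact that $g_\Xi$ only deletes the hull $\Xi$, which lies at a definite distance from the relevant region. Concretely, I would bound $\hcap(g_\Xi(E))/\hcap(E)$ below by controlling how much $g_\Xi$ distorts: since $\overline{\Xi}$ stays within a bounded region and, crucially, $E \subseteq g_{A_1}(\Theta_1 \setminus A_1)$ which is uniformly separated from $\overline{\Theta_2}$'s image — one needs a geometric lemma that $\mathrm{dist}(g_{A_1}(\Gamma_1\text{-part}), g_{A_1}(\Theta_2)) \ge \delta > 0$ uniformly in $A_1$, which follows from a normal-families/compactness argument on the maps $g_{A_1}$, $A_1 \subseteq \Theta_1$, all normalized at $\infty$ with uniformly bounded capacity $\le 2$.

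So the skeleton is: (1) reduce both sides to capacities of $g_{A_1}(B_1\setminus A_1)$ via Lemma~\ref{hcap}(b); (2) factor $g_{A_1\cup\Theta_2} = g_{g_{A_1}(\Theta_2)}\circ g_{A_1}$; (3) establish a uniform separation $\mathrm{dist}\bigl(\overline{g_{A_1}(\Theta_1\setminus A_1)},\ \overline{g_{A_1}(\Theta_2)}\bigr) \ge \delta > 0$ over all subslits $A_1 \subseteq \Theta_1$, using compactness of this family of conformal maps; (4) conclude via a capacity-comparison estimate: if a hull $E$ and a slit $\Xi$ in $\H$ satisfy $\mathrm{dist}(E,\Xi) \ge \delta$ and both lie in a fixed bounded region, then $\hcap(g_\Xi(E)) \ge c(\delta)\,\hcap(E)$, which can be proved by comparing the harmonic-measure (Brownian) definitions of the two capacities, or by a Schwarz-lemma-type distortion estimate on $g_\Xi$ near $E$. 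I expect step (3) — the uniform geometric separation, upgrading the pointwise disjointness $\overline{\Theta_1}\cap\overline{\Theta_2}=\emptyset$ to a bound uniform over the infinite family of maps $\{g_{A_1} : A_1 \subseteq \Theta_1\}$ — to be the main obstacle, since it requires knowing that the family $\{g_{A_1}\}$ is precompact in a suitable sense and that the images $g_{A_1}(\Theta_2)$ cannot degenerate or crowd the image of $\Theta_1$; this is exactly the kind of ``fair amount of technical work'' the introduction warns about, and may well need auxiliary estimates on $g_{A_1}$ near $\R$ (e.g. continuity up to the boundary away from $A_1$) established earlier in this section.
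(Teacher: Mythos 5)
Your steps (1)--(2) are sound, and the idea of reducing to a capacity-comparison estimate for a conformal image is in the right spirit. But note the paper factors the other way round: it writes $g_{A_1\cup\Theta_2}=g_{g_{\Theta_2}(A_1)}\circ g_{\Theta_2}$ and uses Lemma~\ref{hcap}(b) to get $\hcap(B_1\cup\Theta_2)-\hcap(A_1\cup\Theta_2)=\hcap(g_{\Theta_2}(B_1))-\hcap(g_{\Theta_2}(A_1))$, so the \emph{fixed} map $g_{\Theta_2}$ is applied first. Parametrizing $\Theta_1$ by half-plane capacity ($L_s=\theta(0,s]$) and setting $\gamma(s):=\hcap(g_{\Theta_2}(L_s))$, the desired inequality becomes $\gamma(s)-\gamma(\tau)\ge c(s-\tau)$, i.e.\ a purely one-variable calculus statement. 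Your factorization $g_{A_1\cup\Theta_2}=g_{g_{A_1}(\Theta_2)}\circ g_{A_1}$ is also valid, but it makes both the slit $\Xi=g_{A_1}(\Theta_2)$ and the hull $E=g_{A_1}(B_1\setminus A_1)$ depend on $A_1$, which is why you then need the extra uniformity in step (3). That step is not actually the main obstacle: it does follow from a compactness argument on the one-parameter family $\tau\mapsto g_{L_\tau}$, as you suggest.

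The genuine gap is step (4). The inequality $\hcap(g_\Xi(E))\ge c(\delta)\hcap(E)$ for \emph{arbitrary} hulls $E$ at distance $\ge\delta$ from $\Xi$ is essentially the whole content of the lemma, and you give no argument for it beyond gesturing at Brownian-motion comparisons or a ``Schwarz-lemma-type distortion estimate.'' The only hard tool available here is the result of Lawler--Schramm--Werner (\cite[Lemma 2.8]{LSW:2001}), which is an \emph{infinitesimal} statement: as $E$ shrinks to a point $p\notin\overline{\Xi}$, $\hcap(g_\Xi(E))/\hcap(E)\to g_\Xi'(p)^2$. As the paper itself remarks, the entire point of Lemma~\ref{hcap2new} is to upgrade this local result to a global lower bound, and that is precisely what the one-variable reduction accomplishes: one shows the right derivative $\dot\gamma_+(\tau)=2\,[g'_{g_{L_\tau}(\Theta_2)}(g_{L_\tau}(\theta(\tau)))]^2$ exists, is continuous and nonvanishing on $[0,T)$ (using that $U(\tau)=g_{L_\tau}(\theta(\tau))$ stays off $g_{L_\tau}(\Theta_2)$ since $\overline{\Theta_1}\cap\overline{\Theta_2}=\emptyset$), and then invokes \cite[Lemma 4.3]{Lawler:2005} and the mean value theorem to get $c=\min_\tau\dot\gamma_+(\tau)>0$. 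Without some such derivative-plus-mean-value device, your step (4) is not a lemma you can just cite but the crux you must still prove, so the proposal as written is incomplete.
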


We note that a local version of Lemma \ref{hcap2new} in the sense of
$$ \lim \limits_{\hcap(B_1) \searrow \hcap(A_1)} \frac{\hcap(B_1\cup \Theta_2)-\hcap
    (A_1\cup \Theta_2)}{\hcap(B_1)-\hcap(A_1)}>0 \, \quad  \text{ for fixed
  } A_1 \, ,$$
has been proved by Lawler, Schramm and Werner \cite[Lemma 2.8]{LSW:2001}.
Our proof shows how to obtain the global statement of Lemma \ref{hcap2new} from
this local version.

\begin{proof} 
Using $g_{A_1 \cup \Theta_2}=g_{\Delta} \circ g_{\Theta_2}$ for $\Delta:=g_{\Theta_2}(A_1)$
and Lemma
\ref{hcap}, it is easy to see that
\begin{equation} \label{eq:hcap2new0}
 \hcap(B_1 \cup \Theta_2)-\hcap
    (A_1 \cup \Theta_2)=\hcap(g_{\Theta_2}(B_1))-\hcap(g_{\Theta_2}(A_1)) \, .
\end{equation}
Let $T:=\hcap(\Theta_1)/2>0$ and let $\theta : [0,T] \to \C$ be the
  parametrization of $\Theta_1$ by its half--plane capacity (see \cite[Remark 4.5]{Lawler:2005}).
 For fixed $s \in
  [0,T]$ let $L_s:=\theta (0,s]$ and $\gamma(s):=\hcap(
  g_{\Theta_2}(L_s))$. Hence, in view of (\ref{eq:hcap2new0}) and 
since $\hcap(L_s)=2s$, all we  need to show is that
  there is a constant $c>0$ such that
\begin{equation} \label{eq:hcap2new1}
c \le \frac{\gamma(s)-\gamma(\tau)}{s-\tau} \quad \text{ for all } 0 \le  \tau<s
\le T \, .
\end{equation}
In order to prove (\ref{eq:hcap2new1}), we proceed in several steps.

\smallskip

(i) \, Fix $\tau \in [0,T)$. For $s \in [\tau,T]$ let
$$\begin{array}{rcl}
& K_s:=g_{L_{\tau}}(L_s \backslash L_{\tau})  , \qquad &
K_s^*:=g_{g_{L_{\tau}}(\Theta_2)}(K_s) \\[2mm]
& b(s):=\hcap(K_s) \, , \, & b^*(s):=\hcap( K_s^*)\, . 
\end{array}$$
Then, by \cite[Lemma 2.8]{LSW:2001},
the right derivatives $\dot{b}_+(\tau)$ and $\dot{b}^*_+(\tau)$ of $b$ and $b^*$ at $\tau$
exist and
\begin{equation} \label{eq:hcap2new2}
\dot{b}^*_+(\tau)=\left[ g'_{g_{L_{\tau}}(\Theta_2)} \left(
    g_{L_{\tau}} (\theta(\tau)) \right) \right]^2 \dot{b}_+(\tau) \, .
\end{equation}
Here, $$g_{L_{\tau}}(\theta(\tau)):=\lim \limits_{z \to \theta(\tau)}
g_{L_{\tau}}(z)\, , $$ 
where the limit is taken over $z \in \H \backslash \Gamma_{\tau}$.
Now note that by Lemma \ref{hcap} b),
$$ b(s)=\hcap( g_{L_{\tau}}(L_s \backslash
L_{\tau})=\hcap(L_{s})-\hcap(L_{\tau})=2(s-\tau) \, $$
and, in a similar way, $b^*(s)=
\gamma(s)-\gamma(\tau)$.
Therefore, (\ref{eq:hcap2new2}) shows that the right derivative
$\dot{\gamma}_+(\tau)$ of the function $\gamma :[0,T] \to \R$ exists for every
$\tau \in [0,T)$ and 
\begin{equation} \label{eq:hcap2new3}
 \dot{\gamma}_+(\tau)= 2 \left[ g'_{g_{L_{\tau}}(\Theta_2)} \left(
    g_{L_{\tau}} (\theta(\tau)) \right) \right]^2 \, .
\end{equation}
(ii) \, Next  $\tau \mapsto U(\tau):=g_{L_{\tau}}(\theta(\tau))$ is
continuous on $[0,T)$ (see \cite[Lemma 4.2]{Lawler:2005}).
Furthermore, since $\Theta_1 \cap \Theta_2=\emptyset$, i.e., 
$U(\tau)\not \in g_{L_{\tau}}(\Theta_2)$, the function
$g_{g_{L_{\tau}}(\Theta_2)}$ has an analytic continuation to a
neighborhood of $U(\tau)$ and $g'_{g_{L_{\tau}(\Theta_2)}}(U(\tau))\not=0$,
see \cite[p.~69]{Lawler:2005}.
Since $\tau \mapsto g_{g_{L_{\tau}}(\Theta_2)}$ is continuous in the topology
of locally uniform convergence, we hence conclude from  (\ref{eq:hcap2new3})  that
$\dot{\gamma}_+$ is a continuous nonvanishing function  on the interval $[0,T)$. 

\smallskip

(iii) \, From (ii) we see that $\gamma : [0,T] \to \R$ is continuous, has
a right derivative $\dot{\gamma}_+(\tau)$ for every point $\tau \in [0,T)$ and
$\dot{\gamma}_+ :[0,T) \to \R$ is continuous. By Lemma 4.3 in
\cite{Lawler:2005}, $\gamma :(0,T] \to \R$ is differentiable with
$\dot{\gamma}(\tau)=\dot{\gamma}_+(\tau)$ for every $\tau \in (0,T)$. Hence
the mean value theorem shows that (\ref{eq:hcap2new1}) holds with
$$c:=\min \limits_{\tau \in [0,T]} \left[ g'_{g_{L_{\tau}}(\Theta_2)} \left(
    g_{L_{\tau}} (\theta(\tau)) \right) \right]^2 >0 \, .$$
\end{proof}

We shall need the following slight extension of Lemma \ref{hcap2new}.

\begin{lemma}\label{hcap2}
 Let  $\Theta_1$ and $\Theta_2$ be slits with disjoint closures. Then there
 exists a constant $c>0$ such that 
$$ c \le \frac{\hcap(B_1 \cup B_2)-\hcap (A_1 \cup
  A_2)} {\hcap(B_j)-\hcap(A_j)}  \, , \qquad j=1,2 \, , $$
for all subslits $A_1 \subsetneq B_1$ of $\Theta_1$ and $A_2 \subsetneq B_2$
of $\Theta_2$.
\end{lemma}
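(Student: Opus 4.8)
The plan is to derive the two--sided estimate from Lemma~\ref{hcap2new} together with the monotonicity and submodularity of half--plane capacity recorded in Lemma~\ref{hcap}(a)--(b). By the symmetry of the hypothesis it suffices to treat $j=1$: applying the resulting estimate with the roles of $\Theta_1$ and $\Theta_2$ interchanged then settles $j=2$, and the smaller of the two constants works in both cases. Throughout one uses that every set occurring below is a disjoint union of a subslit of $\Theta_1$ and a subslit of $\Theta_2$, hence a hull, so that Lemma~\ref{hcap} applies freely; and instead of the ratio it is enough to prove $c\,(\hcap(B_1)-\hcap(A_1))\le\hcap(B_1\cup B_2)-\hcap(A_1\cup A_2)$ (the denominator is positive by Lemma~\ref{hcap}(b)).

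Fix subslits $A_1\subsetneq B_1\subseteq\Theta_1$ and $A_2\subsetneq B_2\subseteq\Theta_2$. First I would discard $A_2$: since $A_1\cup A_2\subseteq A_1\cup B_2$, Lemma~\ref{hcap}(b) gives
$$\hcap(B_1\cup B_2)-\hcap(A_1\cup A_2)\geq\hcap(B_1\cup B_2)-\hcap(A_1\cup B_2).$$
Next I would enlarge $B_2$ to $\Theta_2$ inside this difference by submodularity. Applying Lemma~\ref{hcap}(a) to $X:=B_1\cup B_2$ and $Y:=A_1\cup\Theta_2$, and using $\Theta_1\cap\Theta_2=\emptyset$, $A_1\subseteq B_1$, $A_2\subseteq B_2$ to compute $X\cup Y=B_1\cup\Theta_2$ and $X\cap Y=A_1\cup B_2$, one obtains
$$\hcap(B_1\cup B_2)+\hcap(A_1\cup\Theta_2)\geq\hcap(B_1\cup\Theta_2)+\hcap(A_1\cup B_2),$$
i.e.\ $\hcap(B_1\cup B_2)-\hcap(A_1\cup B_2)\geq\hcap(B_1\cup\Theta_2)-\hcap(A_1\cup\Theta_2)$. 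Finally Lemma~\ref{hcap2new}, applied to the pair $(\Theta_1,\Theta_2)$, bounds the right--hand side below by $c_1\,(\hcap(B_1)-\hcap(A_1))$ with $c_1>0$ depending only on $\Theta_1,\Theta_2$. Chaining the three estimates proves the case $j=1$ with constant $c_1$; repeating the argument with $\Theta_1$ and $\Theta_2$ swapped gives the case $j=2$ with some $c_2>0$, and one sets $c:=\min\{c_1,c_2\}$.

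I do not anticipate a genuine obstacle: the analytic substance --- the uniform positive lower bound for a conformal derivative --- has already been absorbed into Lemma~\ref{hcap2new}, so what remains is essentially bookkeeping. The only points requiring care are checking that all the unions and intersections above are hulls (so that Lemma~\ref{hcap}(a) is applicable) and carrying out the set--theoretic computation of $X\cap Y$ correctly; both follow at once from the disjointness of $\Theta_1$ and $\Theta_2$ and the inclusions $A_1\subseteq B_1$, $A_2\subseteq B_2$.
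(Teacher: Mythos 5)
Your proof is correct and follows essentially the same route as the paper: a monotonicity step (Lemma~\ref{hcap}(b)) followed by a submodularity step (Lemma~\ref{hcap}(a)) to reduce to the estimate with $A_2,B_2$ replaced by the full slit $\Theta_2$, which is then Lemma~\ref{hcap2new}. The only cosmetic difference is that the paper first shrinks $B_2$ to $A_2$ (comparing $B_1\cup B_2$ with $B_1\cup A_2$) and applies submodularity to $B_1\cup A_2$ and $A_1\cup\Theta_2$, whereas you first enlarge $A_2$ to $B_2$ and apply submodularity to $B_1\cup B_2$ and $A_1\cup\Theta_2$; both chains land on the same intermediate quantity $\hcap(B_1\cup\Theta_2)-\hcap(A_1\cup\Theta_2)$.
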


\begin{proof}
It suffices to prove the lemma for $j=1$. If we apply Lemma \ref{hcap} (b) for
the  hulls $B_1 \cup A_2 \subseteq B_1 \cup B_2$ and then Lemma \ref{hcap}
(a) for the hulls $B_1 \cup A_2$ and $A_1 \cup \Theta_2$, we obtain
\begin{eqnarray*}
 \hcap(B_1 \cup B_2)-\hcap (A_1 \cup A_2) &\ge&  \hcap (B_1 \cup A_2)-\hcap(A_1
\cup A_2) \\ &\ge & \hcap (B_1 \cup \Theta_2)-\hcap (A_1 \cup \Theta_2) \, .
\end{eqnarray*}
Therefore, the estimate of Lemma \ref{hcap2new} completes the proof of Lemma \ref{hcap2}.
\end{proof}

Let $A$ be a hull and $g_A:=h_A^{-1}$. Then $h_A$ maps $\H$ into $\H$ and
$h_A$ is
analytic at $\infty$ such that
$h_A(w)=w-\hcap(A)/w+\LandauO\left(|w|^{-2}\right)$. Hence, by the well--known
Nevanlinna representation formula (see \cite[Thm.~5.3]{RR94} or \cite{GB92}),
\begin{equation} \label{eq:nevanlinna}
h_A(w)=w+\int \limits_{\R} \frac{\mu_A(t)}{t-w} \, , 
\end{equation}
where $\mu_A$ is a finite positive measure on $\R$ with compact support
$\supp(\mu_A)$ and total mass
\begin{equation} \label{eq:hcapmeasure}
\mu_A(\R)=\hcap(A) \, .
\end{equation}

Note that by Schwarz reflection, $g_A$ has an analytic continuation across 
$\R \backslash \overline{A}$ with $g_A(\R \backslash \overline{A}) \subseteq \R$
and $h_A$ is analytic at $g_A(x)$ for every $x \in \R \backslash
\overline{A}$. The Stieltjes inversion formula (see \cite[Thm.~5.4]{RR94})
shows that $g_A(x) \not\in \supp(\mu_A)$ for every $x \in \R \backslash \overline{A}$.

\begin{lemma} \label{lem:hcapneu}
Let $A$ be a hull.
\begin{itemize}
\item[(a)] If $\overline{A} \cap \R$ is contained in the closed interval
  $[a,b]$, then  $g_A(\alpha) \le \alpha$ for every $\alpha \in \R$ with
  $\alpha<a$ and  $g_A(\beta) \ge \beta$ for every $b \in \R$ with $\beta >b$.
\item[(b)] If the open interval $(a,b)$ is contained in $\R \backslash
  \overline{A}$, then 
$|g_A(\beta)-g_A(\alpha)| \le |\beta-\alpha|$ for all $\alpha, \beta \in (a,b)$.
\end{itemize}
\end{lemma}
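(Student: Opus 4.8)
The plan is to read off both parts directly from the Nevanlinna representation~(\ref{eq:nevanlinna}) and from the two facts recorded just above the lemma: that $g_A$ continues holomorphically across $\R\setminus\overline A$ with real boundary values there, and that $g_A(x)\notin\supp(\mu_A)$ for every $x\in\R\setminus\overline A$. Set $S:=\supp(\mu_A)$. If $\mu_A$ is the zero measure, then $h_A=\mathrm{id}$ by~(\ref{eq:nevanlinna}), hence $g_A=\mathrm{id}$ and both claims are trivial; so I may assume $S\neq\emptyset$ and write $m:=\min S$, $M:=\max S$, both finite since $S$ is compact. Differentiating under the integral sign in~(\ref{eq:nevanlinna}) gives, for every real $w\notin S$,
\[
 h_A(w)-w=\int_\R\frac{d\mu_A(t)}{t-w}\,,\qquad
 h_A'(w)=1+\int_\R\frac{d\mu_A(t)}{(t-w)^2}\ \ge\ 1\,,
\]
and the first integral is strictly positive when $w<m$ and strictly negative when $w>M$.

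\textbf{Part (a).} Since $\overline A\cap\R\subseteq[a,b]$, both rays $(-\infty,a)$ and $(b,\infty)$ lie in $\R\setminus\overline A$, so on each of them $g_A$ is real, continuous, and omits $S$. The expansion $g_A(z)=z+\LandauO(|z|^{-1})$ at $\infty$ (valid along the real axis near $\infty$, because $g_A$ extends holomorphically past $\overline A$ and is analytic at $\infty$) forces $g_A(x)\to-\infty$ as $x\to-\infty$ and $g_A(x)\to+\infty$ as $x\to+\infty$. Hence $g_A\big((-\infty,a)\big)$ is a connected subset of $\R\setminus S$ that is unbounded below, so it must lie in the component $(-\infty,m)$; likewise $g_A\big((b,\infty)\big)\subseteq(M,\infty)$. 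Now fix $\alpha<a$ and put $w:=g_A(\alpha)$, so $w<m$ and $h_A(w)=\alpha$ by analytic continuation of $h_A\circ g_A=\mathrm{id}$; since $h_A(w)>w$ for $w<m$ we get $\alpha>w=g_A(\alpha)$, i.e.\ $g_A(\alpha)\le\alpha$. For $\beta>b$ put $w:=g_A(\beta)>M$; from $h_A(w)<w$ we get $\beta<g_A(\beta)$, i.e.\ $g_A(\beta)\ge\beta$.

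\textbf{Part (b).} As $(a,b)\subseteq\R\setminus\overline A$ is open, $g_A$ is holomorphic on a neighbourhood of $(a,b)$, real on $(a,b)$, and $g_A(x)\notin S$ there. Differentiating $h_A(g_A(x))=x$ and using $h_A'(g_A(x))\ge 1$ yields
\[
 0<g_A'(x)=\frac{1}{h_A'(g_A(x))}\le 1\qquad\text{for all }x\in(a,b)\,.
\]
Integrating this estimate over the interval with endpoints $\alpha,\beta$ gives $|g_A(\beta)-g_A(\alpha)|=\big|\int_\alpha^\beta g_A'(x)\,dx\big|\le|\beta-\alpha|$, which is the assertion.

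The only genuinely delicate point is the localisation step in Part~(a): one must be certain that $g_A$ sends each \emph{entire} unbounded component of $\R\setminus\overline A$ to the correct side of $\supp(\mu_A)$, which is where omission of $\supp(\mu_A)$, connectedness, and the behaviour of $g_A$ at $\infty$ must be combined carefully. Once the sign of $t-w$ on $\supp(\mu_A)$ is pinned down, the rest of Part~(a) is the elementary monotonicity of the Cauchy transform, and Part~(b) is immediate from the derivative bound $h_A'\ge 1$.
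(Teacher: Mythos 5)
Your proof is correct. For part~(a) it follows essentially the same route as the paper: substitute $w=g_A(\alpha)$ into the Nevanlinna representation and read off the sign of the resulting integral from the position of $g_A(\alpha)$ relative to $\supp(\mu_A)$. The paper merely asserts that $(-\infty,g_A(\alpha)]$ is disjoint from $\supp(\mu_A)$; your argument (connectedness of $g_A\bigl((-\infty,a)\bigr)$, unboundedness below coming from the behaviour of $g_A$ at $\infty$, and avoidance of $\supp(\mu_A)$) spells out why that holds, a worthwhile clarification rather than a new idea. For part~(b) your route differs in form, though it rests on the same representation: the paper subtracts the Nevanlinna identity at $w=g_A(\alpha)$ from the one at $w=g_A(\beta)$ and observes that the integrand $\frac{g_A(\alpha)-g_A(\beta)}{(t-g_A(\alpha))(t-g_A(\beta))}$ is nonpositive on $\supp(\mu_A)$ because $[g_A(\alpha),g_A(\beta)]$ misses the support; you instead differentiate the identity once to get the pointwise bound $h_A'(w)=1+\int(t-w)^{-2}\,d\mu_A(t)\ge 1$, deduce $0<g_A'\le 1$ on $(a,b)$, and integrate. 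The two are equivalent, one being the difference-quotient form of the other, but your version produces the slightly stronger local statement $g_A'\le 1$ and is arguably cleaner, at the modest cost of justifying differentiation under the integral sign (routine here since $g_A(x)$ stays a positive distance from the compact set $\supp(\mu_A)$ as $x$ ranges over a compact subinterval of $(a,b)$).
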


Hence, roughly speaking, $g_A$ is expanding outside the closed convex hull of
$\overline{A} \cap \R$ and nonexpanding in between points of $\overline{A} \cap \R$.

\begin{proof}
(a) Let $\alpha<a$. Using the Nevanlinna representation formula 
(\ref{eq:nevanlinna}) for $w=g_A(\alpha)$ yields
$$ g_A(\alpha)=h_A(g_A(\alpha))-\int \limits_{\R}
\frac{d\mu_A(t)}{t-g_A(\alpha)} \, .$$
Since the interval $(-\infty,g_A(\alpha)]$ has no point in common with
$\supp(\mu_A)$, we actually integrate over a set for which the integrand is
nonnegative, so $g_A(\alpha) \le \alpha$. The proof of $g_A(\beta) \ge \beta$
for every $\beta>b$ is similar.

\smallskip

(b) Let $\alpha,\beta \in (a,b)$ and assume $\alpha \le \beta$, so
$g_A(\alpha) \le g_A(\beta)$. If we subtract (\ref{eq:nevanlinna}) for
$w=g_A(\alpha)$ from (\ref{eq:nevanlinna}) for $w=g_A(\beta)$, then a short
computation leads to
$$ g_A(\beta)-g_A(\alpha)=\beta-\alpha+\int \limits_{\R}
\frac{g_A(\alpha)-g_A(\beta)}{(t-g_A(\alpha)) (t-g_A(\beta))} \, d\mu_A(t) \,
.$$
Since the closed interval $[g_A(\alpha),g_A(\beta)]$ is disjoint from 
$\supp(\mu_A)$, we integrate over a set for which the integrand is
nonpositive, so $0 \le g_A(\beta)-g_A(\alpha) \le \beta-\alpha$.
\end{proof}

Let $\Gamma$ be the union of two slits $\Theta_1$ and
$\Theta_2$ with disjoint closures. Then $g_{\Gamma}$  extends continuously onto each of the sides of
$\Theta_1$ and of $\Theta_2$
and maps them into $\R$. For every $c\in \Gamma$ which is neither the tip of
$\Theta_1$ nor of $\Theta_2$, we write $g^+_\Gamma(c)$ for the image w.r.t the right side and
$g^-_\Gamma(c)$ w.r.t. the left side, so that $g^-_\Gamma(c)<g^+_\Gamma(c).$

\begin{lemma} \label{lem:neu1}
Let $\Theta_1$  and $\Theta_2$ be two slits which start at $p_1 \in
\R$ resp.~$p_2 \in \R$ such that $p_1<p_2$ and $\overline{\Theta}_1 \cap \overline{\Theta}_2=\emptyset$. Then
\begin{itemize}
\item[(a)]
$ g^-_{\Theta_1\cup \Theta_2}(p_1) \le g^-_{B_1 \cup B_2}(p_1) \le g^+_{B_1
  \cup B_2}(p_2) \le  g^+_{\Theta_1\cup \Theta_2}(p_2)$, and
\item[(b)] $g^-_{B_1 \cup B_2}(p_2)-g^+_{B_1 \cup B_2}(p_1) \ge g^-_{\Theta_1
    \cup \Theta_2}(p_2)-g^+_{\Theta_1\cup\Theta_2}(p_1)$
\end{itemize}
for all subslits $B_1 \subseteq \Theta_1$ and $B_2 \subseteq \Theta_2$.
\end{lemma}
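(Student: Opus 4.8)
The plan is to reduce both inequalities to monotonicity properties of the maps $g_A$ as the hull $A$ shrinks, combined with the ``nonexpanding between boundary points'' part of Lemma \ref{lem:hcapneu}(b). The key structural observation is that if $B_1 \subseteq \Theta_1$ and $B_2 \subseteq \Theta_2$, then $A := B_1 \cup B_2$ is a subhull of $\Gamma := \Theta_1 \cup \Theta_2$, and we can factor $g_\Gamma = g_{g_A(\Gamma \setminus A)} \circ g_A$. Writing $\Phi := g_{g_A(\Gamma \setminus A)}$, the hull $g_A(\Gamma \setminus A)$ is contained in $\H$ and its closure meets $\R$ only in the finitely many points that are images under $g_A$ of the four ``feet'' $g^\pm_{A}(p_1)$, $g^\pm_A(p_2)$ — more precisely, the closure of $g_A(\Gamma\setminus A)$ in $\R$ lies in the two closed intervals $[g^-_A(p_1), g^+_A(p_1)]$ and $[g^-_A(p_2), g^+_A(p_2)]$, which are disjoint since $\overline\Theta_1\cap\overline\Theta_2=\emptyset$ and $p_1 < p_2$.

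First I would prove (a). The outer two inequalities $g^-_{\Theta_1\cup\Theta_2}(p_1)\le g^-_{B_1\cup B_2}(p_1)$ and $g^+_{B_1\cup B_2}(p_2)\le g^+_{\Theta_1\cup\Theta_2}(p_2)$: apply the factorization $g_\Gamma = \Phi\circ g_A$. The point $g^-_A(p_1)$ lies to the left of (or at the left endpoint of) the interval $[g^-_A(p_1),g^+_A(p_1)]$ containing that part of $\overline{g_A(\Gamma\setminus A)}\cap\R$ coming from near $p_1$, and it lies to the left of the whole interval $[g^-_A(p_2),g^+_A(p_2)]$ as well; hence by Lemma \ref{lem:hcapneu}(a) applied to the hull $g_A(\Gamma\setminus A)$ we get $\Phi(g^-_A(p_1))\le g^-_A(p_1)$, i.e. $g^-_\Gamma(p_1)\le g^-_A(p_1) = g^-_{B_1\cup B_2}(p_1)$. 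Symmetrically $g^+_\Gamma(p_2)\ge g^+_{B_1\cup B_2}(p_2)$. The middle inequality $g^-_{B_1\cup B_2}(p_1)\le g^+_{B_1\cup B_2}(p_2)$ is immediate from $p_1<p_2$ and the ordering convention $g^-<g^+$ together with the fact that $g_{B_1\cup B_2}$ preserves the order of boundary points on $\R$.

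Next, (b). Using the same factorization, $g^-_\Gamma(p_2) = \Phi(g^-_A(p_2))$ and $g^+_\Gamma(p_1) = \Phi(g^+_A(p_1))$. Now $g^+_A(p_1)$ and $g^-_A(p_2)$ are two points of $\R$ lying in the open interval strictly between the two disjoint closed intervals $[g^-_A(p_1),g^+_A(p_1)]$ and $[g^-_A(p_2),g^+_A(p_2)]$ — wait: they are the endpoints of that gap, so I instead take a slightly smaller open subinterval $(a,b)$ with $g^+_A(p_1)\le a < b \le g^-_A(p_2)$, or rather observe directly that the open interval $(g^+_A(p_1), g^-_A(p_2))$ is disjoint from $\overline{g_A(\Gamma\setminus A)}$; since the statement of Lemma \ref{lem:hcapneu}(b) concerns a fixed open interval contained in $\R\setminus\overline{A}$, one passes to the limit from open subintervals, or notes that $g^+_A(p_1), g^-_A(p_2) \in \overline{(g^+_A(p_1),g^-_A(p_2))}$ and $\Phi$ extends continuously to the closure. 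Applying Lemma \ref{lem:hcapneu}(b) to the hull $g_A(\Gamma\setminus A)$ on this interval gives $\Phi(g^-_A(p_2)) - \Phi(g^+_A(p_1)) \le g^-_A(p_2) - g^+_A(p_1)$, that is, $g^-_\Gamma(p_2) - g^+_\Gamma(p_1) \le g^-_{B_1\cup B_2}(p_2) - g^+_{B_1\cup B_2}(p_1)$, which is (b).

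The main obstacle I anticipate is the bookkeeping around the boundary behavior: one must carefully justify that $g_A$ (for $A$ a two-slit hull) extends continuously and injectively to each side of each slit and to the relevant real intervals, that the four feet $g^\pm_A(p_i)$ are well-defined and satisfy $g^-_A(p_1)< g^+_A(p_1) < g^-_A(p_2) < g^+_A(p_2)$, and that $\overline{g_A(\Gamma\setminus A)}\cap\R$ really is trapped in the union of the two closed feet-intervals — this uses that $g_A$ maps the two sides of $B_j$ to $\R$ and maps $\Theta_j\setminus B_j$ (a connected arc attached at the tip of $B_j$) into $\H$ with endpoints at the real images of the tip. Once this topological picture is pinned down, both (a) and (b) follow by a single application each of the two halves of Lemma \ref{lem:hcapneu} to the hull $g_A(\Gamma\setminus A)$, with the continuity extension of $\Phi$ to the closed intervals handled by a routine limiting argument.
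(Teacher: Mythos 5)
Your approach is essentially identical to the paper's: both factor $g_{\Theta_1\cup\Theta_2}=\Phi\circ g_{B_1\cup B_2}$ with $\Phi:=g_{g_{B_1\cup B_2}(\Gamma\setminus(B_1\cup B_2))}$ and then apply the two halves of Lemma~\ref{lem:hcapneu} to the pushed-forward hull. The one place where you hesitate (the ``wait'' in part (b)) is exactly the spot the paper handles with a slightly sharper geometric observation than the one you record: rather than only noting $\overline{g_A(\Gamma\setminus A)}\cap\R\subseteq[g^-_A(p_1),g^+_A(p_1)]\cup[g^-_A(p_2),g^+_A(p_2)]$, the paper observes that this intersection is exactly the two-point set $\{a,b\}$, where $a=g_A(\text{tip of }B_1)$ lies \emph{strictly} inside $(g^-_A(p_1),g^+_A(p_1))$ and $b=g_A(\text{tip of }B_2)$ lies strictly inside $(g^-_A(p_2),g^+_A(p_2))$, because $\Theta_j\setminus B_j$ meets $\overline{B_j}$ only at the tip of $B_j$. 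This puts $g^+_A(p_1)$ and $g^-_A(p_2)$ strictly inside the open gap $(a,b)\subseteq\R\setminus\overline{g_A(\Gamma\setminus A)}$, so Lemma~\ref{lem:hcapneu}(b) applies to them directly --- no limiting argument needed. The sharper fact is also what justifies the identification $\Phi(g^+_A(p_1))=g^+_\Gamma(p_1)$ in the first place: it guarantees $g^+_A(p_1)\notin\overline{g_A(\Gamma\setminus A)}$, so $\Phi$ extends analytically (by Schwarz reflection) to a neighborhood of $g^+_A(p_1)$ and there is a single well-defined boundary value there. Your looser containment, combined with a limit from inside the gap, leaves this identification under-justified --- if $g^+_A(p_1)$ were a boundary point of the pushed-forward hull, $\Phi$ could have different one-sided boundary values. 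You already have the needed topological picture in your last paragraph (``a connected arc attached at the tip of $B_j$''); just promote it to the statement that $\overline{g_A(\Gamma\setminus A)}\cap\R$ consists precisely of the images of the two tips, and both (a) and (b) then follow cleanly without any limit.
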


\begin{proof}
(a) Let $A_1:=g_{B_1\cup B_2}(\Theta_1\backslash B_1)$ and $A_2:=g_{B_1\cup
  B_2}(\Theta_2\backslash B_2)$.
Then $A_1$ and $A_2$ are two disjoint slits  which start say at $a \in \R$
resp.~$b \in \R$. Let $A:=A_1 \cup A_2$. Then $A$ is a hull such that
$\overline{A} \cap \R \subseteq [a,b]$. Now $\alpha:=g^-_{B_1 \cup B_2}(p_1) \le a$, so Lemma \ref{lem:hcapneu} (a) implies $g_A(\alpha) \le \alpha$. Since
$g_{\Theta_1 \cup \Theta_2}=g_A \circ g_{B_1\cup B_2}$, this shows that
$g^-_{\Theta_1\cup \Theta_2}(p_1) \le g^-_{B_1 \cup B_2}(p_1)$ and proves the
left--hand inequality. The proof of the right--hand inequality is similar.

\smallskip

(b) Let $A:=g_{B_1 \cup B_2}(\Theta_1\backslash B_1 \cup \Theta_2 \backslash
B_2)$. Then $A$ is a hull with $\overline{A}\cap \R=\{a,b\}$ such that $a < b$
and $a < g^+_{B_1\cup B_2}(p_1) \le g^-_{B_1 \cup B_2}(p_2) < b$. Hence,
$$
g^-_{\Theta_1 \cup \Theta_2}(p_2)-g^+_{\Theta_1\cup\Theta_2}(p_1) =
g_A(g^-_{B_1\cup B_2}(p_2))-g_A(g^+_{B_1\cup B_2}(p_1)) \le
g^-_{B_1\cup B_2}(p_2)-g^+_{B_1\cup B_2}(p_1)  $$
by Lemma \ref{lem:hcapneu} (b).
\end{proof}

\begin{lemma} \label{lem:II}
Let $\Theta_1$ and $\Theta_2$ be slits with disjoint closures. Then there is a
constant $L>0$ such that
$$ |g_{B_1 \cup A_2}(b_1)-g_{B_1 \cup B_2}(b_1)| \le L \cdot
|\hcap(B_2)-\hcap(A_2)|$$
for all subslits $A_2,B_2$ of $\Theta_2$ and every subslit $B_1$ of $\Theta_1$
with tip $b_1 \in B_1$.
\end{lemma}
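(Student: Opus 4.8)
The plan is to reduce the estimate to the Nevanlinna representation (\ref{eq:nevanlinna}) together with the capacity bound in Lemma \ref{hcap}. Write $\Gamma := B_1 \cup A_2$ and observe that, since $A_2 \subseteq B_2$, we have $g_{B_1 \cup B_2} = g_{\Delta} \circ g_{B_1 \cup A_2}$ where $\Delta := g_{B_1 \cup A_2}(B_2 \setminus A_2)$ is a hull. Applying the Nevanlinna formula to $h_\Delta = g_\Delta^{-1}$ and evaluating at $w = g_{B_1 \cup A_2}(b_1)$ (which is a real point lying \emph{outside} $\supp(\mu_\Delta)$, because $b_1$ is the tip of $B_1$ and $\overline{B}_1$ is disjoint from $\overline{\Theta}_2 \supseteq B_2 \setminus A_2$, so $g_{B_1\cup A_2}(b_1)$ is not on the slit $\Delta$), one obtains
\begin{equation*}
g_{B_1 \cup A_2}(b_1) - g_{B_1 \cup B_2}(b_1) \;=\; g_\Delta\bigl(g_{B_1\cup A_2}(b_1)\bigr) - g_{B_1 \cup A_2}(b_1) \;=\; \int_{\R} \frac{d\mu_\Delta(t)}{t - w}\,,
\end{equation*}
with $w = g_{B_1 \cup A_2}(b_1)$. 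The total mass of $\mu_\Delta$ is $\hcap(\Delta)$, and by Lemma \ref{hcap} (b) applied to $A_2 \subseteq B_2$ (followed by Lemma \ref{hcap} (c) to move the extra hull $B_1$ across), $\hcap(\Delta) = \hcap(g_{B_1 \cup A_2}(B_2 \setminus A_2)) \le \hcap(g_{A_2}(B_2 \setminus A_2)) = \hcap(B_2) - \hcap(A_2)$. Thus it remains to bound $|t - w|^{-1}$ uniformly from above over $t \in \supp(\mu_\Delta)$ and over all admissible subslits.

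The key geometric point is therefore a uniform separation: there is $\rho > 0$, depending only on $\Theta_1, \Theta_2$, such that $\mathrm{dist}\bigl(g_{B_1 \cup A_2}(b_1), \supp(\mu_\Delta)\bigr) \ge \rho$ for all admissible $B_1, A_2, B_2$. Since $\supp(\mu_\Delta) \subseteq \overline{\Delta} \cap \R$ and $\overline{\Delta} \cap \R = \{g^-_{B_1 \cup A_2}(p_2), g^+_{B_1 \cup A_2}(p_2)\}$ (the two prime ends over the starting point $p_2$ of $\Theta_2$, in the notation preceding Lemma \ref{lem:neu1}), while $w = g_{B_1 \cup A_2}(b_1)$ is the image of the tip of $B_1$ — which lies to the \emph{left} of $p_2$, hence $w \le g^-_{B_1 \cup A_2}(p_1) \le g^+_{B_1 \cup A_2}(p_1)$ — it suffices to bound $g^-_{B_1 \cup A_2}(p_2) - g^+_{B_1 \cup A_2}(p_1)$ away from zero. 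But Lemma \ref{lem:neu1} (b) gives exactly $g^-_{B_1 \cup A_2}(p_2) - g^+_{B_1 \cup A_2}(p_1) \ge g^-_{\Theta_1 \cup \Theta_2}(p_2) - g^+_{\Theta_1 \cup \Theta_2}(p_1) =: \rho > 0$, the positivity being immediate from $\overline{\Theta}_1 \cap \overline{\Theta}_2 = \emptyset$. (One also needs $w$ to stay to the left of $g^+_{B_1 \cup A_2}(p_1)$; if $B_1$ is a single point this is trivial, and in general $g^+_{B_1 \cup A_2}(b_1)$ for the tip $b_1$ still lies $\le g^+_{B_1 \cup A_2}(p_1)$ — I would phrase $w$ as the common value of the one-sided limits at the tip and invoke Lemma \ref{lem:neu1} (a) once more for the left-hand inequality $g^+_{\Theta_1 \cup \Theta_2}(p_1) \le g^+_{B_1 \cup A_2}(p_1)$ if needed.) Consequently $|t - w| \ge \rho$ for every $t \in \supp(\mu_\Delta)$, and the integral is at most $\rho^{-1}(\hcap(B_2) - \hcap(A_2))$, so the lemma holds with $L := 1/\rho$.

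I expect the main obstacle to be the bookkeeping around the sign conventions and the one-sided boundary values: one must be careful that $w = g_{B_1 \cup A_2}(b_1)$ really is a well-defined real number not separating (or lying inside) $\{g^\pm_{B_1 \cup A_2}(p_2)\}$, and that the separation estimate from Lemma \ref{lem:neu1} is applied in the correct direction (the slit $B_2 \setminus A_2$ grows from $p_2$, and $\Delta$ is its image under $g_{B_1 \cup A_2}$, so $\overline{\Delta} \cap \R$ sits at the two prime ends of $p_2$). Everything else — the composition identity for conformal maps, the Nevanlinna mass being the capacity, and the monotonicity/subadditivity of $\hcap$ — is routine given the lemmas already proved. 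A symmetric argument handles the assertion with the roles of the slits interchanged, but as stated only the case with $b_1 \in B_1 \subseteq \Theta_1$ is required.
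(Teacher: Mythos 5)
Your plan follows the same route as the paper — decompose $g_{B_1\cup B_2}=g_\Delta\circ g_{B_1\cup A_2}$, bound $\hcap(\Delta)$ by $\hcap(B_2)-\hcap(A_2)$ via Lemma~\ref{hcap}, then use the Nevanlinna representation together with a uniform separation furnished by Lemma~\ref{lem:neu1}~(b). But the Nevanlinna step as you have written it is wrong, and the error propagates into the separation estimate. The formula~(\ref{eq:nevanlinna}) reads $h_\Delta(w)-w=\int d\mu_\Delta(t)/(t-w)$, so it must be evaluated at $w=g_{B_1\cup B_2}(b_1)$ (a point in the \emph{range} of $g_\Delta$), whereupon $h_\Delta(w)=g_{B_1\cup A_2}(b_1)$ and you get
\[
g_{B_1\cup A_2}(b_1)-g_{B_1\cup B_2}(b_1)=\int_{\R}\frac{d\mu_\Delta(t)}{t-g_{B_1\cup B_2}(b_1)}\,.
\]
In your display the middle expression $g_\Delta\bigl(g_{B_1\cup A_2}(b_1)\bigr)-g_{B_1\cup A_2}(b_1)$ is actually $g_{B_1\cup B_2}(b_1)-g_{B_1\cup A_2}(b_1)$, the negative of the left-hand side, and equating it to $\int d\mu_\Delta/(t-w)$ with $w=g_{B_1\cup A_2}(b_1)$ substitutes $g_\Delta$ where $h_\Delta$ belongs and plugs in a point from the domain of $g_\Delta$ rather than its range — that integral is $h_\Delta(g_{B_1\cup A_2}(b_1))-g_{B_1\cup A_2}(b_1)$, a different quantity.

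The characterization of the support is also off: $\supp(\mu_\Delta)$ is not $\overline{\Delta}\cap\R$ (a single point, the base of $\Delta$) but the \emph{shadow} $g_\Delta^{\pm}(\overline{\Delta})$, an interval. What matters is that this interval lies to the right of $g^-_{B_1\cup B_2}(p_2)$, while $g_{B_1\cup B_2}(b_1)\le g^+_{B_1\cup B_2}(p_1)$; hence the separation should be $g^-_{B_1\cup B_2}(p_2)-g^+_{B_1\cup B_2}(p_1)$, with subscripts $B_1\cup B_2$ throughout, and only then does Lemma~\ref{lem:neu1}~(b) give the uniform lower bound $g^-_{\Theta_1\cup\Theta_2}(p_2)-g^+_{\Theta_1\cup\Theta_2}(p_1)$. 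Your version has $B_1\cup A_2$ everywhere, which is the wrong map for both the location of $\supp(\mu_\Delta)$ and the location of the evaluation point. All of this is fixable — the ingredients are right and the capacity bound $\hcap(\Delta)\le\hcap(B_2)-\hcap(A_2)$ via Lemma~\ref{hcap}~(b),~(c) is fine — but as written the central identity is false, so the estimate does not close.
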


\begin{proof}
We assume $A_2 \subseteq B_2$. Then $A:=g_{B_1 \cup A_2}(B_2 \backslash A_2)$
is a hull, so  Lemma \ref{hcap} (b) shows $\hcap(A)=\hcap(B_1 \cup
B_2)-\hcap(B_1 \cup A_2)$. On the other hand, 
 Lemma \ref{hcap} (a) applied to the two hulls $B_2$ and $B_1 \cup A_2$ 
gives
$\hcap(B_1 \cup B_2)-\hcap(B_1 \cup A_2) \le
\hcap(B_2)-\hcap(A_2)$. Therefore, 
\begin{equation} \label{eq:II1}
 \hcap(A) \le \hcap(B_2)-\hcap(A_2)
\, .
\end{equation}
Note that $g_{B_1 \cup B_2}=g_A \circ g_{B_1 \cup A_2}$, so the Nevanlinna
representation formula (\ref{eq:nevanlinna}) for $h_A:=g_A^{-1}$ and $w=g_{B_1
  \cup B_2}(b_1)$ shows that
\begin{equation} \label{eq:II2}
g_{B_1 \cup A_2}(b_1)-g_{B_1 \cup B_2}(b_1)=h_A(g_{B_1 \cup B_2}(b_1))-g_{B_1
  \cup B_2}(b_1)=\int \limits_{\R} \frac{d\mu_A(t)}{t-g_{B_1 \cup B_2}(b_1)}
\, .
\end{equation}
Let $\Theta_1$ start at $p_1 \in \R$ and $\Theta_2$ start at $p_2 \in \R$ with
$p_1<p_2$. Then the interval $(-\infty,g^-_{B_1 \cup B_2}(p_2)]$ is disjoint
from the support $\supp(\mu_A)$ of the measure $\mu_A$, so for every $t \in
\supp(\mu_A)$, we have
$$ t-g_{B_1\cup B_2}(b_1) \ge g^-_{B_1 \cup B_2}(p_2)-g^+_{B_1 \cup B_2}(p_1)
\ge  g^-_{\Theta_1 \cup \Theta_2}(p_2)-g^+_{\Theta_1 \cup \Theta_2}(p_1)=:L^{-1}>0$$
by Lemma \ref{lem:neu1} (b).
Hence (\ref{eq:II2}) leads to
$$ 0< g_{B_1 \cup A_2}(b_1)-g_{B_1 \cup B_2}(b_1) \le L \int \limits_{\R} d\mu_A(t)
=L \hcap(A)
$$
by (\ref{eq:hcapmeasure}). In view of (\ref{eq:II1}) the proof of Lemma \ref{lem:II}
is complete.
\end{proof}

\begin{lemma} \label{lem:III}
Let $\Theta_1$ and $\Theta_2$ be slits with disjoint closures. Then there
exists a monotonically increasing function $\omega : [0,\hcap(\Theta_1)] \to
[0,\infty)$ with $\lim \limits_{\delta \searrow 0} \omega(\delta)=\omega(0)=0$ such that
\begin{equation} \label{eq:III1}
 |g_{A_1 \cup A_2}(a_1)-g_{B_1 \cup A_2}(b_1)| \le \omega \left(
  |\hcap(A_1)-\hcap(B_1)| \right)
\end{equation}
for all subslits $A_1$ and $B_1$ of $\Theta_1$ with tips $a_1 \in A_1$ and
$b_1 \in B_1$ and every subslit $A_2 \subseteq \Theta_2$.
\end{lemma}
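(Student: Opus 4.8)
The plan is to reduce the inequality to a uniform bound on the diameter of a single small slit, and then to obtain that bound by a compactness argument. Since the subslits of $\Theta_1$ are totally ordered by inclusion one may assume $A_1\subsetneq B_1$ (the case $A_1=B_1$ being trivial); put $\delta:=\hcap(B_1)-\hcap(A_1)$. I would then introduce the slit $C:=g_{A_1\cup A_2}\big(\overline{B_1\setminus A_1}\big)$, which has base point $u:=g_{A_1\cup A_2}(a_1)\in\R$ and tip $v:=g_{A_1\cup A_2}(b_1)$, and for which $g_{B_1\cup A_2}=g_C\circ g_{A_1\cup A_2}$ by the composition rule for slits, so that $g_{B_1\cup A_2}(b_1)=g_C(v)$ (while $\hcap(C)\le\delta$ by Lemma \ref{hcap}). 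Using the elementary estimate $|g_K(w)-w|\le c_0\operatorname{rad}(K)$ (\cite{Lawler:2005}) together with $\operatorname{rad}(C)\le\diam(C)$ (as $u\in C\cap\R$), one obtains
$$|g_{A_1\cup A_2}(a_1)-g_{B_1\cup A_2}(b_1)|\ \le\ |u-v|+|v-g_C(v)|\ \le\ (1+c_0)\,\diam(C),$$
so everything reduces to bounding $\diam(C)$.

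To that end, let $\theta:[0,T]\to\overline\H$ be the parametrization of $\Theta_1$ by half--plane capacity; then $\overline{B_1\setminus A_1}=\theta[s_1,s_2]$ with $2(s_2-s_1)=\delta$, hence $\diam\big(\overline{B_1\setminus A_1}\big)\le\omega_\theta(\delta/2)$, where $\omega_\theta$ is the modulus of continuity of the uniformly continuous map $\theta$, and $a_1=\theta(s_1)$ lies in this set. Consequently the lemma will follow, with $\omega(\delta):=(1+c_0)\,\eta\big(\omega_\theta(\delta/2)\big)+\delta$, once one proves the following uniform continuity statement: there is a nondecreasing function $\eta$ with $\eta(0)=\lim_{h\searrow0}\eta(h)=0$, depending only on $\Theta_1,\Theta_2$, such that $\diam\big(g_{A_1\cup A_2}(S)\big)\le\eta(\diam S)$ for every subslit $A_1$ of $\Theta_1$ with tip $a_1$, every subslit $A_2$ of $\Theta_2$, and every closed connected set $S\subseteq\overline{\Theta_1\setminus A_1}$ containing $a_1$.

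I would prove this by contradiction. If it fails there are $\eps_0>0$, configurations $A_1^{(n)}$, $A_2^{(n)}$ and sets $S_n$ as above with $\diam S_n\to0$ but $\diam\big(g_{A_1^{(n)}\cup A_2^{(n)}}(S_n)\big)\ge\eps_0$, so one may pick $\zeta_n,\zeta_n'\in S_n$ with $|g_{A_1^{(n)}\cup A_2^{(n)}}(\zeta_n)-g_{A_1^{(n)}\cup A_2^{(n)}}(\zeta_n')|\ge\eps_0$. Parametrizing the subslits of $\Theta_1$ and $\Theta_2$ by half--plane capacity over compact intervals and passing to a subsequence, one may assume that $A_1^{(n)}$, $A_2^{(n)}$ and $a_1^{(n)}$ converge; since $\diam S_n\to0$, also $\zeta_n,\zeta_n'\to a_1^\ast$, the tip of the limit slit $A_1^\ast$ (or $a_1^\ast=p_1$ if $A_1^\ast=\emptyset$). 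By the Carath\'eodory convergence theory (\cite[\S4]{Lawler:2005}) the maps $g_{A_1^{(n)}\cup A_2^{(n)}}$ tend to $g_{A_1^\ast\cup A_2^\ast}$ locally uniformly on $\H\setminus(A_1^\ast\cup A_2^\ast)$; if, in addition, the image of the marked $\Theta_1$--tip under $g_{A_1^{(n)}\cup A_2^{(n)}}$ tends to $g_{A_1^\ast\cup A_2^\ast}(a_1^\ast)$, then both $g_{A_1^{(n)}\cup A_2^{(n)}}(\zeta_n)$ and $g_{A_1^{(n)}\cup A_2^{(n)}}(\zeta_n')$ converge to this common value, contradicting $\eps_0>0$.

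The main obstacle is precisely this last point: the continuity of the Loewner driving term under simultaneous variation of the hull and of the marked slit tip. I would handle it by factoring $g_{A_1\cup A_2}=g_{g_{A_2}(A_1)}\circ g_{A_2}$ and observing that, because $\operatorname{dist}(\overline\Theta_1,\overline\Theta_2)>0$, the family $\{g_{A_2}:A_2\subseteq\Theta_2\}$ consists of uniformly bounded conformal maps on a fixed neighbourhood of $\overline\Theta_1$, hence is a normal family and is uniformly Lipschitz on $\overline\Theta_1$; this reduces the statement to its one--slit analogue for the uniformly bounded, equicontinuous family of curves $\{g_{A_2}(\Theta_1):A_2\subseteq\Theta_2\}$, where the required convergence of driving terms is exactly the continuity contained in Theorem \ref{satz:1} (and \cite[Lemma 4.2]{Lawler:2005}), now applied uniformly over this compact family of curves.
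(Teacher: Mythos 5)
Your strategy is a genuine alternative to the paper's. The paper uses the Nevanlinna representation in two ways: first (via Lemma \ref{lem:hcapneu} and Lemma \ref{lem:neu1}(b)) to show that
\[
|g_{A_1\cup A_2}(a_1)-g_{B_1\cup A_2}(b_1)|\le g^+_{B_1\cup A_2}(a_1)-g^-_{B_1\cup A_2}(a_1)\le g^+_{B_1}(a_1)-g^-_{B_1}(a_1),
\]
so that the right-hand side is a one--slit quantity attached to the fixed curve $\Theta_1$ and \emph{no longer depends on $A_2$ at all}; and second, to control that one--slit quantity through the single, fixed driving function $U$ of $\Theta_1$ via Remark~3.30 and Lemma~4.13 of \cite{Lawler:2005}. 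Your reduction to $\diam(C)$, $C=g_{A_1\cup A_2}(\overline{B_1\setminus A_1})$, via the composition rule and the distortion estimate $|g_K(w)-w|\lesssim\operatorname{rad}(K)$ is correct and parallel to the paper's use of Remark~3.30, but your $C$ still depends on $A_2$, so you have not eliminated the second slit.

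The gap is in the step you flag as ``the main obstacle.'' You claim that after factoring $g_{A_1\cup A_2}=g_{g_{A_2}(A_1)}\circ g_{A_2}$, the needed convergence of driving terms ``is exactly the continuity contained in Theorem~\ref{satz:1} (and \cite[Lemma~4.2]{Lawler:2005}), now applied uniformly over this compact family of curves.'' That sentence hides the heart of the matter. Theorem~\ref{satz:1} is an existence statement, not a continuity statement, and \cite[Lemma~4.2]{Lawler:2005} gives continuity of the driving function for a \emph{single fixed} curve; neither yields, by itself, a modulus of continuity that is uniform as the curve varies over the family $\{g_{A_2}(\Theta_1):A_2\subseteq\Theta_2\}$. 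To close your contradiction argument you need: if $\Gamma^{(n)}\to\Gamma^\ast$ (curves), subslits $A^{(n)}\subseteq\Gamma^{(n)}$ converge in capacity, and boundary points $w_n\in\overline{\Gamma^{(n)}\setminus A^{(n)}}$ converge to the tip of $A^\ast$, then $g_{A^{(n)}}(w_n)\to g_{A^\ast}(a^\ast)$. This is a nontrivial joint-continuity statement for the one--slit driving term, and it requires its own proof (for instance, via \cite[Lemma~4.13]{Lawler:2005} together with a uniform equicontinuity bound on the driving functions of the curves $g_{A_2}(\Theta_1)$, itself a compactness argument). Your use of normality of $\{g_{A_2}\}$ on a neighbourhood of $\overline{\Theta}_1$ is the right geometric input (it is where the disjointness of the closures enters), but as written the plan asserts rather than proves the crucial uniformity. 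The paper's proof sidesteps this entirely: once $A_2$ is eliminated by the non-expansive estimate, one is left with a single curve and a single driving function, and uniform continuity of that one function on $[0,c_1]$ suffices. You should either supply the missing joint-continuity lemma or adopt the paper's non-expansive reduction.
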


\begin{proof}
We first define $\omega(\delta)$ for $\delta \in (0,\hcap(\Theta_1)]$ by
$$ \omega(\delta):=\sup \{ g^+_{B_1}(a_1)-g^-_{B_1}(a_1) \} \, . $$
Here the supremum is taken over all subslits $A_1 \subseteq B_1$ of
$\Theta_1$ such that $\hcap(B_1)-\hcap(A_1) \le \delta$ and $a_1$ is the tip
of $A_1$. Clearly,  $\omega : (0,\hcap(\Theta_1)] \to (0,\infty)$ is
monotonically increasing and we need to prove (i) the estimate (\ref{eq:III1})
and (ii) $\lim_{\delta\searrow 0} \omega(\delta)=0$.

\smallskip

(i) Assume $A_1 \subseteq B_1$. Consider the slit $A:=g_{A_1 \cup A_2}(B_1
\backslash A_1)$, which starts at $g_{A_1 \cup A_2}(a_1)$. Then $g_{B_1 \cup B_2}=g_A \circ g_{A_1 \cup A_2}$, so Lemma
\ref{lem:hcapneu} (a) implies $g^{-}_{B_1 \cup A_2}(a_1) =g^-_A(g_{A_1 \cup A_2}(a_1))
\le g_{A_1 \cup
  A_2}(a_1) \le g^+_A(g_{A_1 \cup A_2}(a_1))=g^+_{B_1 \cup A_2}(a_1)$. Since
we clearly also have $g^-_{B_1 \cup A_2}(a_1) \le
g_{B_1 \cup A_2}(b_1) \le g^+_{B_1 \cup A_2}(a_1)$, we deduce
$$ |g_{A_1 \cup A_2}(a_1)-g_{B_1 \cup A_2}(b_1)| \le g^+_{B_1 \cup
  A_2}(a_1)-g^-_{B_1 \cup A_2}(a_1) \, .$$
Since $g_{B_1 \cup A_2}=g_{g_{B_1}(A_2)} \circ g_{B_1}$, Lemma \ref{lem:neu1}
(b) shows that
$$ g^+_{B_1 \cup A_2}(a_1)-g^-_{B_1 \cup A_2}(a_1)=g_{g_{B_1}(A_2)} \left(
  g^+_{B_1}(a_1) \right)-g_{g_{B_1}(A_2)} \left( g_{B_1}(a_1) \right) \le
g^+_{B_1}(a_1)-g^-_{B_1}(a_1) \, ,$$
so we get $ |g_{A_1 \cup A_2}(a_1)-g_{B_1 \cup A_2}(b_1)| \le
\omega(\hcap(B_1)-\hcap(A_1))$, i.e.~the estimate (\ref{eq:III1}) holds.

\smallskip

(ii) Let $c_1:=\hcap(\Theta_1)/2$, denote by $\theta_1 : [0,c_1] \to \C$
the parametrization of $\Theta_1$ by its half--plane capacity and let $U :
[0,c_1] \to \R$ be the driving function for the slit $\Theta_1$ according to
Theorem \ref{slitex}.
Let $A_1 \subseteq B_1$ be subslits of $\Theta_1$ and let $a_1$ be the tip of
$\Theta_1$. Then there are $t,s \in [0,c_1]$ with $t \le s$ such that
$\theta_1(t)=a_1$, $\theta_1(0,s]=B_1$ and $s-t=\hcap(B_1)/2-\hcap(A_1)/2$.
Consider the slit $P:=g_{A_1}(B_1 \backslash A_1)$, so $\overline{P} \cap
\R=\{U(t)\}$ and $g^+_{B_1}(a_1)-g^-_{B_1}(a_1)$ is the euclidean length of
the interval $g_P(P)$.
 By Remark 3.30 in \cite{Lawler:2005} there is an absolute
constant $M>0$ such that
\begin{equation} \label{eq:III2}
g^+_{B_1}(a_1)-g^-_{B_1}(a_1) \le M \cdot \diam(P) \, ,
\end{equation}
where $\diam(P):=\sup \{|p-q| \, : \, p,q \in P\}$. Define 
$\operatorname{rad}(P):=\{|z-U(t)| \, : \, z \in P\}$. Then, by Lemma 4.13 in
\cite{Lawler:2005},
\begin{eqnarray*}
 \operatorname{rad}(P) &\le & 4\max \left\{ \sqrt{s-t}, \sup \limits_{t \le \tau \le s}
  |U(\tau)-U(t)| \right\} \\[3mm]
& \le & 4 \max \left\{ \sqrt{s-t}, \sup \limits_{|\tau-\sigma|\le s-t} 
  |U(\tau)-U(\sigma)| \right\}
 \, .
\end{eqnarray*}
Hence, if we define 
$$ \varrho(\delta):=\max \left\{ \sqrt{\delta/2}, \sup \limits_{|\tau-\sigma|\le \delta/2} 
  |U(\tau)-U(\sigma)| \right\} $$
for $\delta \in [0,\hcap(\Theta_1)]$ then $\operatorname{rad}(P) \le 4 \varrho(\hcap(B_1)-\hcap(A_1))$.
Using the obvious estimate $\diam(P) \le 2 \operatorname{rad}(P)$, we obtain
from (\ref{eq:III2}) that  $g^+_{B_1}(a_1)-g^-_{B_1}(a_1) \le 8 M
\varrho(\hcap(B_1)-\hcap(A_1))$. Recalling the definition of $\omega(\delta)$,
this shows that $\omega(\delta) \le 8 M \varrho(\delta)$ for all $\delta \in
(0,\hcap(\Theta_1)]$. Since the continuous driving function $U : [0,c_1] \to
\R$ is uniformly continuous on $[0,c_1]$, we see that $\varrho(\delta) \to 0$
as $\delta \searrow 0$, so $\lim \limits_{\delta \searrow 0} \omega(\delta)=0$.
\end{proof}

\begin{lemma} \label{lem:IV}
Let $\Theta_1$ and $\Theta_2$ be slits with disjoint closures. Then there
exist constants $c,L>0$ and a monotonically increasing function $\omega : [0,\hcap(\Theta_1)] \to
[0,\infty)$ with $\omega(0)=0$ such that
\begin{eqnarray*}
|g_{A_1 \cup A_2}(a_1)-g_{B_1 \cup B_2}(b_1)| & \le &
\omega \left( \frac{1}{c} \, |\hcap(A_1\cup A_2)-\hcap(B_1 \cup B_2)|
\right)  \\ & & \hspace*{1cm} + \frac{L}{c} \, |\hcap(A_1 \cup A_2)-\hcap(B_1\cup B_2)| 
\end{eqnarray*}
for all subslits $A_1$ and $B_1$ of $\Theta_1$ with
tips $a_1 \in A_1$ and $b_1 \in B_1$ and all subslits
 $A_2, B_2$ of $\Theta_2$.
\end{lemma}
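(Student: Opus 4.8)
The strategy is to interpolate between the two--slit hulls $A_1\cup A_2$ and $B_1\cup B_2$ through intermediate hulls in which only one of the two slits is changed, so that the estimates of Lemma \ref{lem:II} (the $\Theta_2$--slit varies, the $\Theta_1$--slit and its tip are frozen) and of Lemma \ref{lem:III} (the $\Theta_1$--slit varies, the $\Theta_2$--slit is frozen) apply. These lemmas produce capacity differences of the \emph{individual} slits, and in a final step these are re--expressed in terms of $\hcap(A_1\cup A_2)-\hcap(B_1\cup B_2)$ by means of Lemma \ref{hcap2}. We may assume $\Theta_1$ starts to the left of $\Theta_2$. Since the subslits of $\Theta_2$ are linearly ordered by inclusion, $m_2:=A_2\cap B_2$ is again a subslit of $\Theta_2$, namely the smaller of $A_2$ and $B_2$.

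\emph{Step 1 (interpolation).} By the triangle inequality with the two intermediate hulls $A_1\cup m_2$ and $B_1\cup m_2$,
\begin{eqnarray*}
\bigl|g_{A_1\cup A_2}(a_1)-g_{B_1\cup B_2}(b_1)\bigr|
&\le& \bigl|g_{A_1\cup A_2}(a_1)-g_{A_1\cup m_2}(a_1)\bigr|
 + \bigl|g_{A_1\cup m_2}(a_1)-g_{B_1\cup m_2}(b_1)\bigr| \\
&& {}+ \bigl|g_{B_1\cup m_2}(b_1)-g_{B_1\cup B_2}(b_1)\bigr| \, .
\end{eqnarray*}
In the first and the third term only the $\Theta_2$--slit is changed while the $\Theta_1$--slit (hence its tip) is frozen; Lemma \ref{lem:II} bounds them by $L\,|\hcap(A_2)-\hcap(m_2)|$ and $L\,|\hcap(B_2)-\hcap(m_2)|$. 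Since $m_2$ coincides with one of $A_2,B_2$ and $\hcap$ is monotone, $|\hcap(A_2)-\hcap(m_2)|+|\hcap(B_2)-\hcap(m_2)|=|\hcap(A_2)-\hcap(B_2)|$. In the middle term only the $\Theta_1$--slit is changed, from $A_1$ to $B_1$, with the frozen $\Theta_2$--slit $m_2$; Lemma \ref{lem:III} bounds it by $\omega\bigl(|\hcap(A_1)-\hcap(B_1)|\bigr)$. Hence
$$\bigl|g_{A_1\cup A_2}(a_1)-g_{B_1\cup B_2}(b_1)\bigr|\ \le\ \omega\bigl(|\hcap(A_1)-\hcap(B_1)|\bigr)+L\,|\hcap(A_2)-\hcap(B_2)| \, ,$$
with $\omega$, $L$ the modulus and constant furnished by Lemmas \ref{lem:III} and \ref{lem:II}.

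\emph{Step 2 (capacity conversion).} It remains to bound each of $|\hcap(A_1)-\hcap(B_1)|$ and $|\hcap(A_2)-\hcap(B_2)|$ by $c^{-1}\,\bigl|\hcap(A_1\cup A_2)-\hcap(B_1\cup B_2)\bigr|$ with a constant $c>0$ depending only on $\Theta_1$ and $\Theta_2$. This is exactly what Lemma \ref{hcap2} provides for the subslits of $\Theta_1$ and of $\Theta_2$ entering here. Substituting into Step 1 and using the monotonicity of $\omega$ yields
$$\bigl|g_{A_1\cup A_2}(a_1)-g_{B_1\cup B_2}(b_1)\bigr|\ \le\ \omega\!\left(\tfrac1c\bigl|\hcap(A_1\cup A_2)-\hcap(B_1\cup B_2)\bigr|\right)+\tfrac{L}{c}\,\bigl|\hcap(A_1\cup A_2)-\hcap(B_1\cup B_2)\bigr| \, ,$$
which is the assertion (extending $\omega$ monotonically beyond $\hcap(\Theta_1)$ if necessary).

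\emph{Main obstacle.} The whole weight of the argument sits in Step 2. Lemma \ref{lem:II} and Lemma \ref{lem:III} only ``see'' the half--plane capacities of the individual slits, and the difficulty is to dominate these, uniformly, by the single scalar $\hcap(A_1\cup A_2)-\hcap(B_1\cup B_2)$; this is precisely the refined, \emph{global} submodularity of Lemma \ref{hcap2}, which in turn rests on the local non--degeneracy estimate underlying it. It is here, and only here, that the disjointness of the closures $\overline{\Theta}_1,\overline{\Theta}_2$ is genuinely used: if the two slits were allowed to touch, the quotient controlled by Lemma \ref{hcap2} could degenerate to $0$ and the estimate would collapse.
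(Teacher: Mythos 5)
Your proof is correct and, in the case that matters, identical to the paper's: the paper also splits by the triangle inequality through the intermediate hull $B_1\cup A_2$, bounds the two pieces by Lemma \ref{lem:III} and Lemma \ref{lem:II}, and converts the individual capacity differences via Lemma \ref{hcap2}. One caveat about the extra generality you claim: your Step 1 with $m_2=A_2\cap B_2$ does cover non--nested pairs, but Step 2 then fails there, because Lemma \ref{hcap2} requires $A_1\subsetneq B_1$ \emph{and} $A_2\subsetneq B_2$ simultaneously --- and indeed if one slit grows while the other shrinks, $\hcap(B_1\cup B_2)-\hcap(A_1\cup A_2)$ can vanish while the individual differences do not, so the asserted bound is false in that regime. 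The paper avoids this by assuming $A_1\subsetneq B_1$ and $A_2\subsetneq B_2$ at the outset, which is all that is needed in the application to Loewner parametrizations; you should make the same restriction explicit.
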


\begin{proof}
We can assume $A_1 \subsetneq B_1$ and $A_2 \subsetneq B_2$.
Then 
\begin{eqnarray*}
|g_{A_1 \cup A_2}(a_1)-g_{B_1 \cup B_2}(b_1)| & \le & |g_{A_1 \cup
  A_2}(a_1)-g_{B_1 \cup A_2}(b_1)|+|g_{B_1 \cup A_2}(b_1)-g_{B_1 \cup
    B_2}(b_1)| \\
&\le &  \omega (\hcap(B_1)-\hcap(A_1))+L \left( \hcap(B_2)-\hcap(A_2) \right)
\, .
\end{eqnarray*}
by Lemma \ref{lem:III} and Lemma \ref{lem:II}. Now the estimate of Lemma
\ref{hcap2}
completes the proof of Lemma \ref{lem:IV}.
\end{proof}

\begin{proof}[Proof of Theorem \ref{thm:aux} II: Precompactness]
Let $(\gamma_1,\gamma_2)$ be a Loewner parametrization of a subhull of
$\Theta_1 \cup \Theta_2$, let $g_t:=g_{\gamma_1(0,t] \cup \gamma_2(0,t]}$ , and let 
$U_1(t)=g_{t}(\gamma_1(t))$ and $U_2(t)=g_{t}(\gamma_2(t))$
be the driving functions for $(\gamma_1,\gamma_2)$. Then
$$ g^-_{t}(\gamma_1(0)) \le U_j(t) \le
g^+_{t}(\gamma_2(0)) \, ,$$
so Lemma \ref{lem:neu1} (a) implies
$$ g^-_{\Theta_1 \cup \Theta_2}(\theta_1(0)) \le U_j(t) \le
g^+_{\Theta_1 \cup \Theta_2}(\theta_2(0)) \, , \qquad j=1,2\, .$$
This gives a uniform bound for $U_1(t)$ and $U_2(t)$.
Since $\hcap(\gamma_1(0,t] \cup \gamma_2(0,t])=2t$,
Lemma \ref{lem:IV} implies
$$
 |U_1(t)-U_1(s)|=|g_{t}(\gamma_1(t))-g_{s}(\gamma_1(s))| \le 
\omega \left( \frac{2|t-s|}{c} \right) +\frac{2L}{c} |t-s|$$
for all $t,s \in [0,1]$. This shows that the driving functions $U_1$ 
for all Loewner parametri\-zations $(\gamma_1, \gamma_2)$ are
uniformly equicontinuous on $[0,1]$. By switching the roles of $U_1$ and
$U_2$, the same result holds for the driving functions $U_2$. An application
 of the Arzel\`a--Ascoli theorem completes the proof of Theorem \ref{thm:aux}.
\end{proof}

\section{\label{existence} Proof of Theorem \ref{Charlie2}, Part I
  (Existence)}

\begin{lemma}  \label{lem:stepfunctions}
Let $\Theta_1$ and $\Theta_2$ be slits with disjoint closures and
$\hcap(\Theta_1)=\hcap(\Theta_2)=2$, and let $\alpha : [0,1] \to \{0,1\}$ be a
step function. Then there exists a Loewner parametrization $(\gamma_{1,\alpha},\gamma_{2,\alpha})$
of a subhull $A_{\alpha}$ of $\Theta_1 \cup \Theta_2$ such that for the
corresponding weight functions $\lambda_{1,\alpha}, \lambda_{2, \alpha}$
and every $t \in [0,1]$,
$$ \lambda_1(t)=1 \text{ iff } \alpha(t)=1 \quad \text{ and } \quad \lambda_2(t)=1 \text{
  iff } \alpha(t)=0 \, .$$
Moreover,  $g_{t,\alpha}:=g_{\gamma_{1,\alpha}(0,t] \cup
  \gamma_{2,\alpha}(0,t]}$ is the solution to the Loewner equation
\begin{equation} \label{eq:bang}
\begin{array}{rcl}
\dot{g}_{t,\alpha}(z)&=&\displaystyle \frac{2\alpha(t)}{g_{t,\alpha}(z)-U_{1,\alpha}(t)}+\frac{2(1-\alpha(t))}{g_{t,\alpha}(z)-U_{2,\alpha}(t)}\,
, \quad t \in [0,1] \, , \\[2mm]
   g_{0,\alpha}(z)&=& z \, ,
\end{array}
\end{equation}
where $U_{1,\alpha}(t)=g_{t,\alpha}(\gamma_{1,\alpha}(t))$ and
$U_{2,\alpha}(t)=g_{t,\alpha}(\gamma_{2,\alpha}(t))$ are the continuous
driving functions of the Loewner parametrization $(\gamma_{1,\alpha},\gamma_{2,\alpha})$.
\end{lemma}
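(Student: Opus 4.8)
The plan is to glue together finitely many one--slit Loewner flows, one over each interval on which $\alpha$ is constant, using Theorem~\ref{satz:1} together with the composition rule $g_{A\cup C}=g_{g_A(C)}\circ g_A$ for hydrodynamically normalized conformal maps (\cite{Lawler:2005}). Fix a partition $0=s_0<s_1<\dots<s_N=1$ with $\alpha\equiv\eps_k\in\{0,1\}$ on $(s_{k-1},s_k)$; we may assume $\alpha$ is not a.e.\ constant, the constant case being immediate from Theorem~\ref{satz:1}. I will define, by induction on $k$, continuous curves $\gamma_{1,\alpha},\gamma_{2,\alpha}$ on $[0,s_k]$ tracing subslits of $\Theta_1$ resp.\ $\Theta_2$ such that, writing $A_t:=\gamma_{1,\alpha}(0,t]\cup\gamma_{2,\alpha}(0,t]$ and $g_{t,\alpha}:=g_{A_t}$, we have $\hcap(A_t)=2t$, on $[s_{k-1},s_k]$ the slit not selected by $\eps_k$ is constant, and $g_{t,\alpha}$ solves \eqref{eq:bang} for $t\le s_k$. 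The base case $k=0$ is trivial, with $A_0=\emptyset$ and $g_{0,\alpha}=\mathrm{id}$.

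For the inductive step assume the construction is done on $[0,s_{k-1}]$ and say $\eps_k=1$ (the case $\eps_k=0$ is symmetric). Let $b_1$ be the tip of $\gamma_{1,\alpha}(0,s_{k-1}]$ and put $S:=g_{s_{k-1},\alpha}\bigl(\overline{\Theta_1\setminus\gamma_{1,\alpha}(0,s_{k-1}]}\bigr)$, a slit starting at $g_{s_{k-1},\alpha}(b_1)\in\R$. By Lemma~\ref{hcap}(b), $\hcap\bigl(\Theta_1\cup\gamma_{2,\alpha}(0,s_{k-1}]\bigr)=\hcap(A_{s_{k-1}})+\hcap(S)=2s_{k-1}+\hcap(S)$, so monotonicity of capacity and $\hcap(\Theta_1)=2$ give
\[
 \hcap(S)\ \ge\ \hcap(\Theta_1)-2s_{k-1}\ =\ 2(1-s_{k-1})\ \ge\ 2(s_k-s_{k-1}).
\]
Hence the one--slit Loewner flow $f_\sigma$ of $S$ provided by Theorem~\ref{satz:1} (with $f_0=\mathrm{id}$ and driving function $V$) is defined at least for $\sigma\in[0,s_k-s_{k-1}]$, and $f_\sigma=g_{S_\sigma}$, where $S_\sigma\subseteq S$ is the subslit of capacity $2\sigma$. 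On $(s_{k-1},s_k]$ we set $g_{t,\alpha}:=f_{t-s_{k-1}}\circ g_{s_{k-1},\alpha}$, let $\gamma_{1,\alpha}(t)$ be the preimage under $g_{s_{k-1},\alpha}$ of the tip of $S_{t-s_{k-1}}$ (so $\gamma_{1,\alpha}(t)\in\Theta_1\setminus\gamma_{1,\alpha}(0,s_{k-1}]$ for $t>s_{k-1}$ and $\gamma_{1,\alpha}(s_{k-1})=b_1$), and keep $\gamma_{2,\alpha}$ constant equal to $\gamma_{2,\alpha}(s_{k-1})$. Then $A_t=g_{s_{k-1},\alpha}^{-1}(S_{t-s_{k-1}})\cup A_{s_{k-1}}$ is a hull contained in $\Theta_1\cup\Theta_2$; the composition rule gives $g_{A_t}=g_{S_{t-s_{k-1}}}\circ g_{s_{k-1},\alpha}=g_{t,\alpha}$ (so the two descriptions of $g_{t,\alpha}$ coincide), and Lemma~\ref{hcap}(b) yields $\hcap(A_t)=2s_{k-1}+\hcap(S_{t-s_{k-1}})=2t$.

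Finally, $(\gamma_{1,\alpha},\gamma_{2,\alpha})$ is continuous (on each subinterval each component is either constant or a capacity reparametrization of a sub--arc of $\Theta_1$ or $\Theta_2$, and the components match at the partition points) and satisfies conditions (i) and (ii) of a Loewner parametrization, so it is a Loewner parametrization of $A_\alpha:=A_1\subseteq\Theta_1\cup\Theta_2$, with continuous driving functions $U_{j,\alpha}(t)=g_{t,\alpha}(\gamma_{j,\alpha}(t))$ (Theorem~\ref{thm:aux}). On $(s_{k-1},s_k)$ with $\eps_k=1$ one has $U_{1,\alpha}(t)=V(t-s_{k-1})$, and differentiating $g_{t,\alpha}=f_{t-s_{k-1}}\circ g_{s_{k-1},\alpha}$ in $t$ yields $\dot{g}_{t,\alpha}(z)=2/(g_{t,\alpha}(z)-U_{1,\alpha}(t))$, which is precisely \eqref{eq:bang} since $\alpha\equiv1$ there; by the symmetric argument \eqref{eq:bang} holds for every $t$ outside the finite set $\{s_1,\dots,s_{N-1}\}$. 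Likewise, on each $(s_{k-1},s_k)$ only the slit selected by $\eps_k$ grows, at capacity speed $2$, while the other is frozen, so the weight functions of the parametrization satisfy $\lambda_{1,\alpha}=\eps_k$ and $\lambda_{2,\alpha}=1-\eps_k$ there; since $\alpha\equiv\eps_k$ on that interval, this is the asserted identity for every $t\notin\{s_1,\dots,s_{N-1}\}$, which is all that is needed since $\lambda_{1,\alpha},\lambda_{2,\alpha}$ are only defined up to a null set. The one ingredient that is more than bookkeeping is the displayed bound $\hcap(S)\ge 2(s_k-s_{k-1})$: it is exactly what stops the one--slit flow of $S$ from running out before time $s_k$, i.e.\ what keeps $A_t$ inside $\Theta_1\cup\Theta_2$; everything else follows from Lemma~\ref{hcap}, the composition rule, and Theorem~\ref{satz:1}.
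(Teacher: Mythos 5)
Your proof is correct and takes essentially the same approach as the paper's: both induct over the intervals of constancy of $\alpha$, reduce the evolution on each interval to the one--slit Loewner equation for the image slit $S$ under the conformal map at the start of the interval, and glue via the composition rule and Lemma~\ref{hcap}(b). The one point you make explicit that the paper leaves implicit (behind the word ``Clearly'') is the capacity bound $\hcap(S)\ge 2(s_k-s_{k-1})$, which guarantees the one--slit flow persists through the whole interval; this is a genuine small improvement in rigor, but the argument is otherwise the same.
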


\begin{proof}
 For $j=1,2$ let $\theta_j : [0,1] \to \Theta_j$ be the 
parametrization of $\Theta_j$ by its half--plane capacity. 
We construct two monotonically increasing continuous functions $x_{1,\alpha}, x_{2,\alpha} :
[0,1] \to [0,1]$ such that for every
subinterval $I \subseteq [0,1]$,
\begin{itemize}
\item[(I)] 
$x_{1,\alpha}$ is constant on $I$ if and only if
$\alpha|_{I} \equiv 0$, 
\item[(II)] $x_{2,\alpha}$  is constant on $I$ if and only
  if $\alpha|_I \equiv 1$,
and
\item[(III)] $(\gamma_{1,\alpha},\gamma_{2,\alpha}):=(\theta_1 \circ
  x_{1,\alpha},\theta_2 \circ x_{2,\alpha})$ defines a Loewner parametrization of a subhull of
$\Theta_1 \cup \Theta_2$,
\end{itemize}
as follows.
Let $0=\tau_0 < \tau_1<\ldots<\tau_N=1$ be a partition of
$[0,1]$ into subintervals $I_j:=[\tau_{j-1},\tau_j)$. We may assume that
$\alpha \equiv 1$ on $I_1\cup I_3 \cup I_5 \cup \ldots$ and $\alpha \equiv 0$
on $I_2 \cup I_4 \cup I_6\cup \ldots$. We construct $\gamma_{1,\alpha},
\gamma_{2,\alpha}$ on the closure $\overline{I}_j$ by induction.

\medskip

(i) For $t \in I_1$ let $x_{1,\alpha}(t):=2 t$ and $x_{2,\alpha}(t):=0$.\\[1mm]
(ii) Assume that $x_{1,\alpha}, x_{2,\alpha}$ have been constructed on $\overline{I}_{j-1}$.
Consider the case $\alpha|_{I_{j}} \equiv 0$, so $\alpha|_{I_{j-1}}\equiv
1$. Then $x_{2,\alpha}(t)=x_{2,\alpha}(\tau_{j-1})$ for every $t \in
I_{j-1}$. Now fix $t \in (\tau_{j-1},\tau_j]$. Let
$x_{1,\alpha}(t):=x_{1,\alpha}(\tau_{j-1})$. Clearly, there exists a unique
$c \in[ x_{2,\alpha}(\tau_{j-1}),1]$ such that $ \hcap (\theta_1(0,x_{1,\alpha}(\tau_{j-1})] \cup
\theta_2(0,c])=2t$. Let $x_{2,\alpha}(t):=c$. \\[1mm]
By construction, $x_{1,\alpha},x_{2,\alpha}$ satisfy (I)--(III), so
$(\gamma_{1,\alpha}, \gamma_{2,\alpha})=(\theta_1 \circ x_{1,\alpha},\theta_2
\circ x_{2,\alpha})$ is a Loewner parametrization of a subhull $A_{\alpha}$ of
$\Theta_1 \cup \Theta_2$
such that $\lambda_{1,\alpha}(t)=\alpha(t)$ and
$\lambda_{2,\alpha}(t)=1-\alpha(t)$ for all $t \in [0,1]$.

\medskip

It remains to show that $g_{t, \alpha}=g_{\gamma_{1,\alpha}(0,t] \cup
  \gamma_{2,\alpha}(0,t]}$ is the solution of the Loewner equation
(\ref{eq:bang}). We again proceed by induction and first prove this for $t \in \overline{I}_1=[0,\tau_1]$.
Note that  for $t \in \overline{I}_1$ we have $\alpha(t)= 1$ and  
the Loewner parametrization $(\gamma_{1,\alpha},
\gamma_{2, \alpha})$ generates the \textit{one--slit}
$\gamma_{1,\alpha}(0,\tau_0] \cup
\gamma_{2,\alpha}(0,\tau_0]=\gamma_{1,\alpha}(0,\tau_0]$, so
 $g_{t,\alpha}=g_{\gamma_{1,\alpha}(0,t]}$ and $g_{t,\alpha}(\gamma_{1,\alpha}(t))=U_{1,\alpha}(t)$.
 Hence, by Theorem
\ref{satz:1}, we have
$$ \dot{g}_{t,\alpha}(z)=\frac{2}{g_{t,\alpha}(z)-U_{1,\alpha}(t)}=
\frac{2 \alpha(t)}{g_{t,\alpha}(z)-U_{1,\alpha}(t)}+\frac{2 (1-\alpha(t))}{g_{t,\alpha}(z)-U_{2,\alpha}(t)}
 \, , \qquad t \in \overline{I}_1 \, .$$
Next assume that we  already know that $g_{t,\alpha}$ is the solution to
(\ref{eq:bang}) on the closure  of the intervall $I_1 \cup \ldots \cup
I_{j-1}$ for some $j \in \{2, \ldots, N\}$. Let $B:=\gamma_{1,\alpha}(0,\tau_{j-1}] \cup
\gamma_{2,\alpha}(0,\tau_{j-1}]$ and let $B'_t:=\gamma_{1,\alpha}(0,t] \cup
\gamma_{2,\alpha}(0,t]$ for $t \in \overline{I}_j$. Since $\alpha|_{I_j} \equiv 0$, the set
$\Gamma_t:=g_B(B'_t\backslash B)=g_B(\gamma_{2,\alpha}(\tau_{j-1},t])$ is a one--slit with parametrization
$s \mapsto g_B(\gamma_{2,\alpha}(s))$, $s  \in [\tau_{j-1},t]$.
 Lemma \ref{hcap} implies that
$\hcap (\Gamma_t)=\hcap (B'_t)-\hcap(B)=2(t-\tau_{j-1})$, so $g_B \circ
\gamma_{2,\alpha}$ is the parametrization of $\Gamma_t$ with respect to
half--plane capacity (on the interval $[\tau_{j-1},t]$). 
Hence, by Theorem \ref{satz:1}, the function $\tilde{g}_t:=g_{\Gamma_t}$ is
the solution of the one--slit equation
$$ \dot{\tilde{g}}_t(z)=\frac{2}{\tilde{g}_t(z)-\tilde{U}(t)} \, , \qquad
t\in \overline{I}_j \, ,$$
where $\tilde{U}(t)=\tilde{g}_t(g_B(\gamma_2(t))$.
Now note that
$g_{t,\alpha}=g_{\Gamma_t} \circ g_B=\tilde{g}_t \circ g_B$,
so $\tilde{U}(t)=g_{t,\alpha}(\gamma_{2,\alpha}(t))$. Therefore, using again
$\alpha|_{I_j} \equiv 0$, we get
$$ \dot{g}_{t,\alpha}(z)=\frac{2}{g_{t,\alpha}(z)-U_{2,\alpha}(t)}
=\frac{2 \alpha(t)}{g_{t,\alpha}(z)-U_{1,\alpha}(t)}+\frac{2 (1-\alpha(t))}{g_{t,\alpha}(z)-U_{2,\alpha}(t)}
 \, , \qquad t \in \overline{I}_j \, .$$
This completes the proof of Lemma \ref{lem:stepfunctions}.
\end{proof}

\begin{proof}[Proof of Theorem \ref{Charlie2} (Existence)]
Let $(\Gamma_1, \Gamma_2)$ be a two--slit with
$\hcap(\Gamma_1 \cup \Gamma_2)=2$. We choose slits $\Theta_1 \supseteq
\Gamma_1$,
  $\Theta_2\supseteq \Gamma_2$ with disjoint closures and
$\hcap(\Theta_1)=\hcap(\Theta_2)=2$.

\medskip

\textit{Step 1:} Let $\alpha : [0,1] \to \{0,1\}$ be a step function.
Using the Loewner parametrization of Lemma \ref{lem:stepfunctions}
and the associated driving functions $U_{1,\alpha}, U_{2,\alpha} \in C[0,1]$, 
 we see that the solution $g_{t,\alpha}$ of the Loewner equation
$$
\dot{g}_{t,\alpha}(z)=\frac{2\alpha(t)}{g_{t,\alpha}(z)-U_{1,\alpha}(t)}+\frac{2(1-\alpha(t))}{g_{t,\alpha}(z)-U_{2,\alpha}(t)}\,
,
  \quad g_{0,\alpha}(z)=z \, ,$$
has the property that $g_{1,\alpha}=g_{A_{\alpha}}$.

\medskip

\textit{Step 2:} By Theorem \ref{thm:aux}, the set
$\{U_{1,\alpha}, U_{2,\alpha} \, |
\, \alpha : [0,1] \to\{0,1\} \text{ step function}\}$ is a precompact subset
of $C[0,1]$.

\medskip

\textit{Step 3:} Fix $n \in \N$ and $\mu \in [0,1]$.
Let  $\alpha_{n,\mu}:[0,1]\to \{0,1\}$ be the step function defined by \begin{equation}\label{Pete} \alpha_{n,\mu}(t)=\begin{cases}
1 \quad \text{when} \quad t\in (\frac{k}{2^n}, \frac{k+\mu}{2^n}),\; k\in\{0,...,2^n-1\},\\[1mm]
0 \quad \text{when} \quad t\in (\frac{k+\mu}{2^n}, \frac{k+1}{2^n}),\; k\in\{0,...,2^n-1\}.                                                                  \end{cases}
\end{equation}
By Step 1, we find continuous driving functions $U_{1,n,\mu},
U_{2,n,\mu} : [0,1] \to \R$ such that the solution $g_{t,n,\mu}$ to
\begin{equation}\label{e1}
  \dot{g}_{t,n,\mu}=\frac{2\alpha_{n,\mu}(t)}{g_{t,n}-U_{1,n,\mu}(t)}+\frac{2(1-\alpha_{n,\mu}(t))}{g_{t,n}-U_{2,n,\mu}(t)},
  \quad g_{0,n,\mu}(z)=z. \end{equation}
for $t=1$ produces the subhull $A_{\alpha_{n,\mu}}$ of $\Theta_1 \cup
\Theta_2$. If we denote by $\theta_j : [0,1] \to \Theta_j$ the
parametrization of $\Theta_j$ by its half--plane capacity, then we can write
$A_{\alpha_{n,\mu}}=\theta_1(0,x_{1,n,\mu}]\cup
\theta_2(0,x_{2,n,\mu}]$ with $x_{1,n,\mu}, x_{2,n,\mu} \in [0,1]$.
 For fixed $n \in \N$,
$\mu \mapsto x_{1,n,\mu}$ is clearly continuous on $[0,1]$ with $x_{1,n,0}=0$ and $x_{1,n,1}=1$.
Hence, the intermediate value theorem shows that there is a number $\mu_n \in [0,1]$
such that $x_{1,n,\mu_n}=\hcap(\Gamma_1)$. Since
$\hcap(A_{\alpha_{n,\mu}})=2=\hcap(\Gamma_1 \cup \Gamma_2)$,
we get $x_{2,n,\mu_n}=\hcap(\Gamma_2)$ from Lemma \ref{hcap} (b).
 Hence, if we
set $\alpha_n:=\alpha_{n,\mu_n}$, we have $A_{\alpha_n}=\Gamma_1 \cup \Gamma_2$.
\medskip

Since $(\mu_n)$ is a sequence of real numbers in the interval $[0,1]$, we can find a subsequential limit
$\lambda:=\lim_{k \to \infty} \mu_{n_k}$. We claim that the step functions
$\alpha_{n_k}$ converge weakly in $L^1[0,1]$ to the constant function
$\lambda$. For this purpose, it suffices (see \cite[p.~118]{Jur97}) to prove that
$$  \int \limits_{a}^b \alpha_{n_k}(s) \, ds \to \lambda (b-a) \, $$
for all $0 \le a<b \le 1$, a fact which can be easily verified directly using the
definition of the step functions $\alpha_n$.

\medskip

\textit{Step 4:} If $(\alpha_{n_k})$ is the weakly convergent sequence of Step
3, we can assume with the help of Step 2 that the driving functions
$U_{1,\alpha_{n_k}}, U_{2,\alpha_{n_k}}$ converge uniformly on $[0,1]$ to
functions $U_1, U_2 \in C[0,1]$. If $g_t$ denotes the solution to the
Schramm--Loewner equation
$$
\dot{g}_t(z)=\frac{2\lambda}{g_t(z)-U_{1}(t)}+\frac{2(1-\lambda)}{g_t(z)-U_{2}(t)},\quad
g_0(z)=z \, ,$$
then by Theorem \ref{thm:control} the Loewner chains $g_{t,\alpha_{n_k}}$
converge to $g_t$ in the Carath\'eodory sense. In particular, $g_1=g_{\Gamma_1
  \cup \Gamma_2}$. This completes the proof of Theorem \ref{Charlie2} (Existence).
\end{proof}

\section{Dynamic interpretation of constant weights}

Let $\Gamma_1$ and $\Gamma_2$ be slits with disjoint closures. We have proved
in Section 4 that there exists a constant $\lambda \in [0,1]$ and driving functions
$U_1,U_2 \in C[0,1]$ such that the solution $g_t$ to the Schramm--Loewner equation
\begin{equation}\label{eq:2slits}
  \dot{g}_t(z)=\frac{2\lambda}{g_t(z)-U_{1}(t)}+\frac{2(1-\lambda)}{g_t(z)-U_{2}(t)},\quad
  g_0(z)=z \, , \end{equation}
satisfies $g_1=g_{\Gamma_1 \cup \Gamma_2}$. Let $\gamma_1(t)$ and
$\gamma_2(t)$ be the tip of the part of $\Gamma_1$ and $\Gamma_2$ respectively
at time $t$, so $(\gamma_1,\gamma_2)$ is a Loewner parametrization of
$(\Gamma_1,\Gamma_2)$ with  constant weights $\lambda$ and $1-\lambda$.
 In this section, we will derive some properties of this  Loewner parametrization
 $(\gamma_1,\gamma_2)$ and start with a simple estimate for the imaginary part of the slits.

\begin{lemma}\label{imestim} The Loewner parametrization $(\gamma_1,\gamma_2)$ satisfies
 $$\max_{z\in \gamma_1[0,t]\cup\gamma_2[0,t]}\Im z\leq 2\sqrt{t}.$$
\end{lemma}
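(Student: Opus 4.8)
The plan is to bound the imaginary part of a point on the slit using the evolution of a single point under the Loewner flow \eqref{eq:2slits}. Fix $t_0\in[0,1]$ and let $z_0:=\gamma_1(t_0)$ (the case $z_0=\gamma_2(t_0)$ is symmetric). The key observation is that for $s<t_0$ the point $z_0$ lies in the domain $H_s$ of $g_s$, so the solution $w(s):=g_s(z_0)$ of \eqref{eq:2slits} with $w(0)=z_0$ is well defined for $s\in[0,t_0)$, and as $s\uparrow t_0$ we have $w(s)\to U_1(t_0)\in\R$, i.e.\ $\Im w(s)\to 0$. The idea is therefore to track how $\Im w(s)$ decreases and to show that the total decrease is at least $(\Im z_0)^2/4$, which forces $\Im z_0\le 2\sqrt{t_0}\le 2\sqrt t$ whenever $t_0\le t$.

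First I would compute $\frac{d}{ds}\Im w(s)$. Writing $w(s)=x(s)+iy(s)$ and taking imaginary parts of \eqref{eq:2slits},
\begin{equation}\label{imderiv}
  \dot y(s)=-\frac{2\lambda\, y(s)}{(x(s)-U_1(s))^2+y(s)^2}-\frac{2(1-\lambda)\,y(s)}{(x(s)-U_2(s))^2+y(s)^2}\le -\frac{2\lambda\,y(s)}{(x(s)-U_1(s))^2+y(s)^2}-\frac{2(1-\lambda)\,y(s)}{(x(s)-U_2(s))^2+y(s)^2}.
\end{equation}
Since $y(s)>0$ on $[0,t_0)$, this already shows $y$ is strictly decreasing. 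To get a quantitative bound I would use the crude estimate $(x(s)-U_k(s))^2\ge 0$ in each denominator, giving $\dot y(s)\le -\dfrac{2\lambda y(s)}{y(s)^2}-\dfrac{2(1-\lambda)y(s)}{y(s)^2}=-\dfrac{2}{y(s)}$, hence $\frac{d}{ds}\bigl(y(s)^2\bigr)=2y(s)\dot y(s)\le -4$. Integrating from $0$ to any $s<t_0$ yields $y(s)^2\le y(0)^2-4s=(\Im z_0)^2-4s$. Letting $s\uparrow t_0$ and using $y(s)\to 0^+$ gives $0\le(\Im z_0)^2-4t_0$, i.e.\ $\Im z_0\le 2\sqrt{t_0}$.

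The one genuine point requiring care — the potential main obstacle — is justifying that $\Im g_s(z_0)\to 0$ as $s\uparrow t_0$, i.e.\ that $z_0=\gamma_1(t_0)$ really is "absorbed" into the hull exactly at time $t_0$ rather than surviving past it. This follows because $(\gamma_1,\gamma_2)$ is a Loewner parametrization: the hull at time $s$ is $\gamma_1(0,s]\cup\gamma_2(0,s]$, so $\gamma_1(t_0)$ lies on the boundary of the complement $H_{t_0}$ but in $H_s$ for $s<t_0$, and $g_s(\gamma_1(t_0))\to g_{t_0}(\gamma_1(t_0))=U_1(t_0)\in\R$ by continuity of the driving function and boundary behaviour of the conformal maps $g_s$ (the relevant continuity is exactly the content of Theorem~\ref{thm:aux} together with standard Loewner theory, cf.\ Theorem~\ref{satz:1}). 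Once this is in hand, taking the supremum over all points $\gamma_1(t_0),\gamma_2(t_0)$ with $t_0\le t$ and over $t_0$ gives $\max_{z\in\gamma_1[0,t]\cup\gamma_2[0,t]}\Im z\le 2\sqrt t$, completing the proof.
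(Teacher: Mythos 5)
Your approach (pushing a point on the slit forward under the Loewner flow until it is absorbed) is genuinely different from the paper's, which runs the \emph{backward} Loewner equation from a nearly-real point $x_0+iy_0$ and lets $y_0\searrow 0$. Both strategies can work, but as written your proof has a sign error in the central differential inequality. From $(x(s)-U_k(s))^2\ge 0$ one gets $(x(s)-U_k(s))^2+y(s)^2\ge y(s)^2$, hence
$$\frac{y(s)}{(x(s)-U_k(s))^2+y(s)^2}\;\le\;\frac{1}{y(s)},$$
and since these terms enter $\dot y(s)$ with a \emph{minus} sign, the resulting bound is
$$\dot y(s)\;\ge\;-\frac{2}{y(s)},$$
not $\dot y(s)\le -2/y(s)$ as you wrote. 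Consequently $\tfrac{d}{ds}\bigl(y(s)^2\bigr)\ge -4$ and $y(s)^2\ge y(0)^2-4s$. The conclusion then comes out right once the signs are corrected: sending $s\uparrow t_0$ and using $y(s)\to 0^+$ gives $0\ge (\Im z_0)^2-4t_0$, i.e.\ $\Im z_0\le 2\sqrt{t_0}$. (Note also that your final written step, ``$0\le(\Im z_0)^2-4t_0$, i.e.\ $\Im z_0\le 2\sqrt{t_0}$'', is itself an algebra slip even on its own terms, since $0\le a^2-4t_0$ yields $a\ge 2\sqrt{t_0}$; two sign errors have cancelled to give the intended statement.)

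Beyond the sign issue, your argument genuinely needs the fact that $\Im g_s(\gamma_1(t_0))\to 0$ as $s\uparrow t_0$, and you correctly flag this as the delicate point. It is true (by Lemma~\ref{lem:q}, $T_{z_0}=t_0$ since $z_0\in H_s$ for $s<t_0$ but $z_0\notin H_{t_0}$, and the bound $|\dot w|\le 4/\Im w$ shows that the only way the solution can fail to continue is for $\Im w\to 0$), but it is exactly the kind of boundary-behaviour discussion that the paper's backward-flow argument avoids entirely: there one starts from an \emph{interior} point $x_0+iy_0\in\Ha$, where the backward flow trivially exists on all of $[0,t]$, shows $\Im h_s(x_0+iy_0)\le\sqrt{4s+y_0^2}$ by the same one-line estimate (now with the inequality in the favourable direction, since the backward flow \emph{increases} the imaginary part), and finally lets $y_0\searrow 0$. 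This makes the paper's version shorter and self-contained, whereas yours, once corrected, relies on the swallowing-time characterization from Lemma~\ref{lem:q}.
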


\begin{proof} 
For fixed $t\in[0,1]$ we consider the backward Loewner
equation \begin{equation}\label{2slitsb}
  \dot{h}_s(z)=\frac{-2\lambda}{h_s(z)-U_{1}(t-s)}+\frac{-2(1-\lambda)}{h_s(z)-U_{2}(t-s)},\quad
  h_0(z)=z\in\Ha \, ,  \end{equation}
for $s \in [0,t]$. In view of (\ref{eq:2slits}) we see that
 $h_t=g_t^{-1}$, so it suffices to prove $\Im h_t(x_0) \le 2\sqrt{t}$ for each
 $x_0 \in \R$.
 Let $x_0\in\R$. For $y_0>0$ we write $h_s(x_0+iy_0)=x_s+iy_s.$ Then (\ref{2slitsb}) gives 
 \begin{eqnarray*} \dot{y}_s&=&\frac{2\lambda y_s}{(x_s-U_1(t-s))^2+y_s^2}+\frac{2(1-\lambda) y_s}{(x_s-U_2(t-s))^2+y_s^2}\\[2mm]
&\leq&   \frac{2\lambda y_s}{y_s^2}+\frac{2(1-\lambda) y_s}{y_s^2}=\frac{2}{y_s}.\end{eqnarray*}
Thus $y_s\leq \sqrt{4s+y_0^2}.$ Letting $s\nearrow t$ and then $y_0\searrow 0$ shows $\Im h_t(x_0)
\leq 2\sqrt{t}.$
\end{proof}

In the following lemma we let $\mathcal{B}(z,r):=\{w\in\C\with|z-w|<r\},$
where $z\in\C, r>0$ and for $A\subseteq \C$ we define
$\displaystyle\diam(A):=\sup_{z,w\in A}|z-w|.$ 

\begin{lemma}\label{asyhcap} Let $x(t)=\hcap(\gamma_1(0,t])$ and
$y(t)=\hcap(\gamma_2(0,t])$. Then
 $$x(t)+y(t)-2t=\Landauo(t) \quad \text{for} \quad t\to0.$$
\end{lemma}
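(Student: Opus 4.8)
The claim is that the half-plane capacities $x(t)=\hcap(\gamma_1(0,t])$ and $y(t)=\hcap(\gamma_2(0,t])$ of the two individual slits, which always satisfy $x(t)+y(t)\le \hcap(\gamma_1(0,t]\cup\gamma_2(0,t])=2t$ by Lemma \ref{hcap}(b) (applied twice) together with Lemma \ref{hcap}(a), actually add up to $2t$ with an error that is $\Landauo(t)$ as $t\to 0$. The plan is to compare $\hcap(\gamma_1(0,t]\cup\gamma_2(0,t])$ with $\hcap(\gamma_1(0,t])+\hcap(\gamma_2(0,t])$ by quantifying the superadditivity defect, which by Lemma \ref{hcap}(b) equals $\hcap\bigl(g_{\gamma_1(0,t]}(\gamma_2(0,t])\bigr)-\hcap(\gamma_2(0,t])$. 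So the heart of the matter is to show that the conformal map $g_{\gamma_1(0,t]}$, which fixes $\infty$ hydrodynamically, distorts the half-plane capacity of the small hull $\gamma_2(0,t]$ by a multiplicative factor that tends to $1$ as $t\to 0$.

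\textbf{Key steps.} First I would record that, by Lemma \ref{imestim}, both slits are contained in the disc $\mathcal{B}(0,R)\cap\Ha$ of radius comparable to $\sqrt t$ (after translating so that, say, $0$ lies between the two starting points; more precisely $\gamma_1(0,t]$ and $\gamma_2(0,t]$ lie within a fixed bounded region and have imaginary parts $\le 2\sqrt t$). Actually the more useful fact is that $\hcap(\gamma_j(0,t])\le \hcap(\gamma_1(0,t]\cup\gamma_2(0,t])=2t$, so each $\gamma_j(0,t]$ is a hull of capacity $\le 2t$; combined with the imaginary-part bound, a slit of small capacity sitting under height $2\sqrt t$ is trapped in a small region. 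Second, since $\Gamma_1$ and $\Gamma_2$ have disjoint closures, the set $\Gamma_2$ — and hence $\gamma_2(0,t]$ for all $t$ — stays a definite distance $d_0>0$ away from $\Gamma_1\supseteq\gamma_1(0,t]$. Therefore $g_{\gamma_1(0,t]}$ is analytic in a fixed neighbourhood of $\gamma_2(0,t]$, uniformly in $t$, and one can use the Nevanlinna representation (\ref{eq:nevanlinna}) for $h_{\gamma_1(0,t]}=g_{\gamma_1(0,t]}^{-1}$: the associated measure $\mu_{\gamma_1(0,t]}$ has total mass $\hcap(\gamma_1(0,t])\le 2t$ and support in the bounded projection of $\overline{\Gamma}_1$ onto $\R$. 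Third, I would estimate the distortion $|g_{\gamma_1(0,t]}(z)-z|$ and $|g_{\gamma_1(0,t]}'(z)-1|$ on $\gamma_2(0,t]$: differentiating the Nevanlinna formula one gets $|g_{\gamma_1(0,t]}(z)-z|\le \mu(\R)/d_0=\LandauO(t)$ and $|g_{\gamma_1(0,t]}'(z)-1|=\LandauO(t)$ uniformly for $z$ with $\operatorname{dist}(z,\overline{\Gamma}_1)\ge d_0$. Fourth, I would use the scaling/distortion behaviour of half-plane capacity under a conformal map that is close to the identity near a small hull: writing $\gamma_2(0,t]$ as the image of a small slit, $\hcap\bigl(g_{\gamma_1(0,t]}(\gamma_2(0,t])\bigr)=\bigl(1+\LandauO(t)\bigr)\hcap(\gamma_2(0,t])+(\text{lower order})$, so the defect $\hcap(\gamma_2(0,t])-\hcap\bigl(g_{\gamma_1(0,t]}(\gamma_2(0,t])\bigr)$ is $\LandauO(t)\cdot\hcap(\gamma_2(0,t])=\LandauO(t^2)=\Landauo(t)$. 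Combining with the first observation then yields $2t-\bigl(x(t)+y(t)\bigr)=\Landauo(t)$, which is the assertion.

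\textbf{Main obstacle.} The delicate point is Step four: making precise that a near-identity conformal map distorts half-plane capacity by a near-$1$ factor, with an error controlled by $\|g-\mathrm{id}\|$ times the capacity of the small hull. The clean way is via the Nevanlinna/Cauchy-transform description of capacity: $\hcap(A)$ is the mass of $\mu_A$, and one can relate $\mu_{g(A)}$ to $\mu_A$ through the change of variables induced by $g$ near $\overline{A}$. Concretely, for $A=\gamma_2(0,t]$ and $\phi:=g_{\gamma_1(0,t]}$, one has $g_{\phi(A)}=g_{g_{A}(\ldots)}$... — rather than chase that, the safest route is the elementary one used elsewhere in the paper: use $\hcap(\phi(A))=\lim_{z\to\infty} z\bigl(z-g_{\phi(A)}(z)\bigr)$ together with $g_{\phi(A)}=g_A\circ\phi^{-1}$ off $\phi(A)$ and expand at $\infty$, or invoke directly the estimate (from Lawler's book, already cited) that $\hcap(\phi(A))\le \hcap(A)$ with equality defect controlled by how far $\phi$ is from the identity on $\overline{A}$; since $\phi$ is analytic with $\phi(z)=z+\LandauO(t)$ near $\overline{\Gamma}_2$, one gets the defect is $o(\hcap(A))$ uniformly, hence $o(t)$. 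Once this quantitative distortion estimate is in hand, the rest is bookkeeping with Lemma \ref{hcap}(a),(b) and Lemma \ref{imestim}.
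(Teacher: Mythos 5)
Your approach is genuinely different from the paper's: the paper expresses $x(t)+y(t)-2t$ via the probabilistic representation $\hcap(A)=\lim_{y\to\infty}y\,\mathbf{E}^{yi}[\Im(B_{\tau_A})]$, identifies the defect with the contribution of Brownian paths that hit \emph{both} slits before $\R$, and bounds that contribution by $\LandauO(t)\cdot(\text{Beurling estimate})\cdot(\text{hitting probability})$, which is $\LandauO(t)\cdot\Landauo(1)=\Landauo(t)$. You instead propose a conformal-distortion argument via the Nevanlinna representation. In principle this is a legitimate alternative, but as written it has two real problems.

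First, the sign is backwards. Lemma \ref{hcap}(a) with $A_1\cap A_2=\emptyset$ gives $\hcap(A_1)+\hcap(A_2)\ge\hcap(A_1\cup A_2)$, i.e.\ half-plane capacity is \emph{super}additive for disjoint hulls, so $x(t)+y(t)\ge 2t$, not $\le$. Equivalently, by Lemma \ref{hcap}(b),(c), the nonnegative defect is
$x(t)+y(t)-2t=\hcap(\gamma_2(0,t])-\hcap\bigl(g_{\gamma_1(0,t]}(\gamma_2(0,t])\bigr)$.
Your decomposition has the same absolute value, so this is repairable, but the repeated appeal to ``subadditivity'' signals a misreading of Lemma \ref{hcap}.

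Second, and more seriously, Step 4 — the quantitative distortion of $\hcap$ under the near-identity map $\phi:=g_{\gamma_1(0,t]}$ — is the whole lemma, and your proposed ways around it do not work. The ``safe route'' uses the identity $g_{\phi(A)}=g_A\circ\phi^{-1}$, which is false: $g_A\circ\phi^{-1}$ maps $\H\setminus\phi(A)$ onto $\H\setminus g_A(\gamma_1(0,t])$, not onto $\H$; the correct relation is $g_{\phi(A)}=g_{\gamma_1(0,t]\cup A}\circ\phi^{-1}$, which reintroduces the very capacity $\hcap(\gamma_1(0,t]\cup A)=2t$ you started from and is circular. The other route, invoking an off-the-shelf ``defect controlled by $\|\phi-\mathrm{id}\|$'' estimate, is also unavailable: the closest result in Lawler's book/LSW is Lemma~2.8 of \cite{LSW:2001}, which gives $\hcap(g_B(A'))/\hcap(A')\to g_B'(p)^2$ as the \emph{inner} hull $A'$ shrinks to a point $p$ with $B$ \emph{fixed}. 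Here both $A'=\gamma_2(0,t]$ and $B=\gamma_1(0,t]$ shrink simultaneously, and the uniformity of the error in $B$ is exactly the non-trivial content. (Note that replacing $\gamma_1(0,t]$ by the fixed $\Gamma_1$ and using LSW~2.8 as stated only gives $\hcap(\gamma_2)-\hcap(g_{\Gamma_1}(\gamma_2))\sim(1-g_{\Gamma_1}'(p)^2)\,\hcap(\gamma_2)$, which is of order $t$, not $\Landauo(t)$; so the shrinking of $\gamma_1(0,t]$ is essential and must be used with care.) The paper's Brownian-motion argument sidesteps all of this by estimating the defect directly in terms of a double-hitting probability, using Lemma \ref{imestim} and Beurling's projection theorem, and does not need any quantitative version of the LSW derivative lemma.
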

\begin{proof}
First, we note that $x(t)+y(t)-2t \geq 0$ for all $t$ because of Lemma
\ref{hcap} a).

\medskip

We will use a formula which translates the half--plane capacity of an arbitrary hull $A$ into an expected value of a random variable derived from a Brownian motion hitting this hull. Let $B_s$ be a Brownian motion started in $z\in \Ha\setminus A.$ We write $\operatorname{\bf P}^{z}$ and $\operatorname{\bf E}^{z}$ for probabilities and expectations derived from $B_s.$ Let $\tau_A$ be the smallest time $s$ with $B_s\in \R\cup A.$ Then formula (3.6) of Proposition 3.41 in \cite{Lawler:2005} tells us
$$\hcap(A)=\lim_{y\to\infty}y\operatorname{\bf E}^{yi}[\operatorname{Im}(B_{\tau_A})].$$
Let $\varrho_t=\tau_{\gamma_1[0,t]}$ and $\sigma_t=\tau_{\gamma_2[0,t]}.$ Then we have 
\begin{eqnarray*}x(t)+y(t)-2t=\lim_{y\to \infty}y \left(\operatorname{\bf E}^{yi}[\operatorname{Im}(B_{\varrho_t});\sigma_t<\varrho_t]+ \operatorname{\bf E}^{yi}[\operatorname{Im}(B_{\sigma_t});\sigma_t>\varrho_t]\right).
\end{eqnarray*}
We will estimate the two expected values. First, $\Im(B_{\varrho_t})\leq 2\sqrt{t}$ by Lemma \ref{imestim} and we get
$$\operatorname{\bf E}^{yi}[\operatorname{Im}(B_{\varrho_t});\sigma_t<\varrho_t]\leq 2\sqrt{t} \cdot \operatorname{\bf P}^{yi}\{B_{\varrho_t}\in \gamma_1[0,t];\sigma_t<\varrho_t\}. $$
Now for $t$ small enough there exists $R>0$ such that 
$$ \gamma_1[0,s]\subset \mathcal{B}(\Re(\gamma_1(s)), R), \qquad \gamma_2[0,s]\subset \mathcal{B}(\Re(\gamma_2(s)), R),  $$
$$ \gamma_1[0,t] \cap \mathcal{B}(\Re(\gamma_2(s)), R)=\emptyset,\qquad \gamma_2[0,t] \cap \mathcal{B}(\Re(\gamma_1(s)), R)=\emptyset, $$
for all $s\in[0,t].$

\medskip

 A Brownian motion satisfying $\sigma_t<\varrho_t$ will hit $\gamma_2[0,t]$, say at $\gamma_2(s)$ for some $s\in[0,t]$, and has to leave $\mathcal{B}(\Re(\gamma_2(s)),R)\cap \overline{\Ha}$ without hitting the real axis, see Figure 1. Call the probability of this event $p_s$. Then we have
$$ \operatorname{\bf P}^{yi}\{B_{\varrho_t}\in \gamma_1[0,t];\sigma_t<\varrho_t\} \leq  \operatorname{\bf P}^{yi}\{B_{\sigma_t}\in \gamma_2[0,t]\} \cdot \sup_{s\in[0,t]}p_s. $$
Lemma \ref{imestim} implies $\Im(B_{\sigma_t})\leq 2\sqrt{t}$ and Beurling's estimate (Theorem 3.76 in \cite{Lawler:2005}) says that there exists $c_1>0$ (depending on $R$ only) such that $$p_s\leq c_1 \cdot 2\sqrt{t}.$$
(Note that Theorem 3.76 in \cite{Lawler:2005} gives an estimate on the probability that a Brownian motion started in $\D$ will not have hit a fixed curve, say $[0,1]$, when leaving $\D$ the first time. The estimate we use can be simply recovered by mapping the half-circle $\D\cap \Ha$ conformally onto $\D\setminus [0,1]$ by $z\mapsto z^2.$)\\
 We get the same estimates for $\operatorname{\bf
   E}^{yi}[\operatorname{Im}(B_{\sigma_t});\sigma_t>\varrho_t]$ and putting
 all this together gives the following upper bound for $x(t)+y(t)-2t$
\begin{eqnarray*} 
&\displaystyle   \lim_{y\to \infty}y \left(2\sqrt{t}\cdot c_1\cdot 2\sqrt{t} \cdot \operatorname{\bf P}^{yi}\{B_{\sigma_t}\in \gamma_2[0,t]\}+ 2\sqrt{t}\cdot c_1\cdot 2\sqrt{t} \cdot \operatorname{\bf P}^{yi}\{B_{\varrho_t}\in \gamma_1[0,t]\}\right)\\
 & \displaystyle \hspace*{-3.2cm} =4c_1t\cdot\displaystyle\lim_{y\to \infty}y
 \left( \operatorname{\bf P}^{yi}\{B_{\sigma_t}\in
   \gamma_2[0,t]\}+\operatorname{\bf P}^{yi}\{B_{\varrho_t}\in
   \gamma_1[0,t]\}\right)\, .\end{eqnarray*}
Here the  limit exists and (see \cite[p.~74]{Lawler:2005})
\begin{eqnarray*}
 \lim_{y\to \infty}y \operatorname{\bf P}^{yi}\{B_{\sigma_t}\in
 \gamma_2[0,t]\} &\leq & c_2  \diam(\gamma_2[0,t]) \, , \\
\lim_{y\to \infty}y \operatorname{\bf P}^{yi}\{B_{\varrho_t}\in
\gamma_1[0,t]\} &\leq & c_2 \diam(\gamma_1[0,t]) \end{eqnarray*}
with a universal constant $c_2>0$.
Finally $\diam(\gamma_j[0,t])\to 0$ for $t\to 0$ and $j=1,2;$ see, e.g., Lemma 4.13 in \cite{Lawler:2005}. Hence we have shown $x(t)+y(t)-2t=\Landauo(t).$ 
\end{proof}

\begin{figure}[h] \label{Brownian_motion}
    \centering
   \includegraphics[width=140mm]{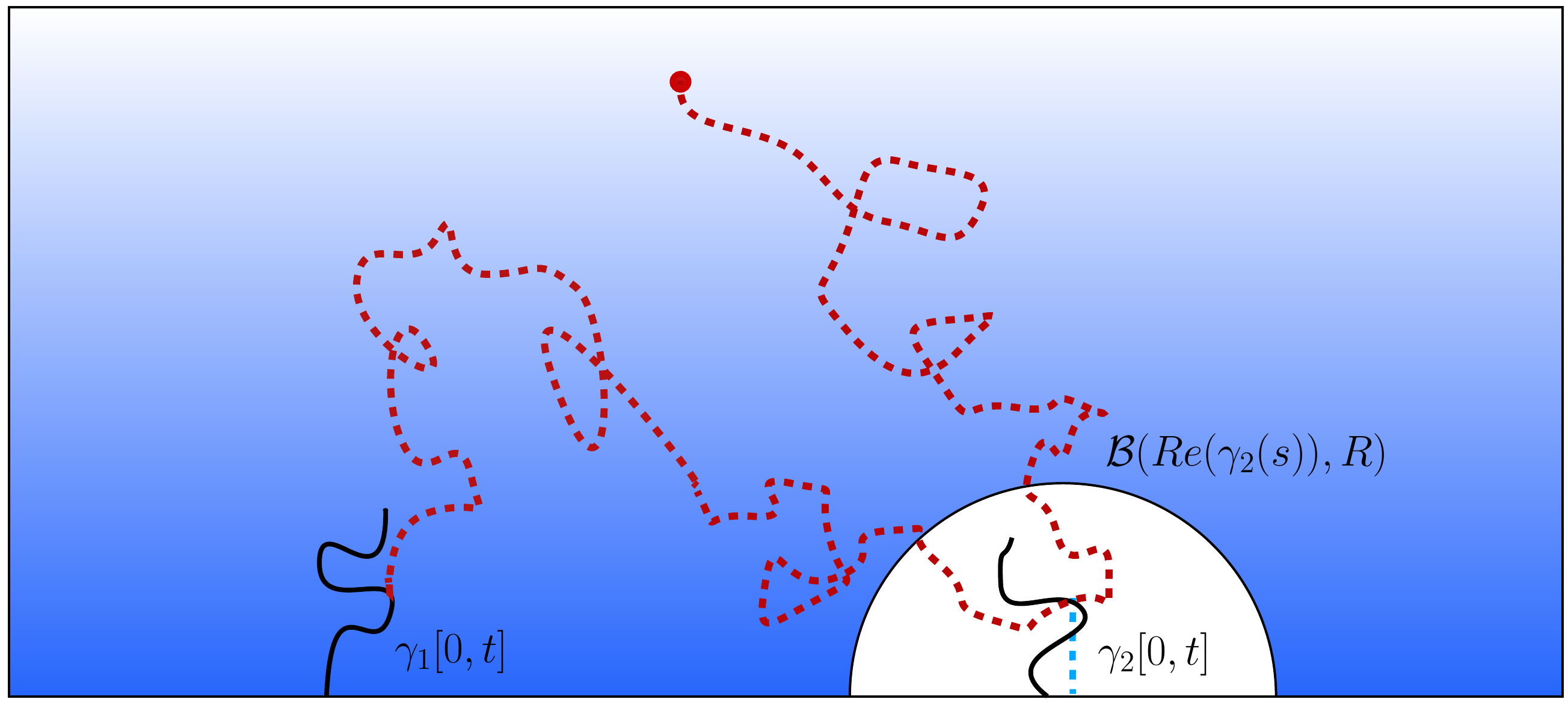}
\caption{A Brownian motion with $\sigma_t<\varrho_t$.}
 \end{figure}

{The following lemma gives a dynamical interpretation of the weights $\lambda$ and $1-\lambda$.}

\begin{lemma}\label{nointer} Let $x(t)=\hcap(\gamma_1(0,t])$ and
$y(t)=\hcap(\gamma_2(0,t])$.
Then $x(t)$ and $y(t)$ are differentiable in $t=0$ with $$\dot{x}(0)=2\lambda \quad \text{and} \quad\dot{y}(0)=2(1-\lambda).$$
\end{lemma}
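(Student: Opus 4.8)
The plan is to reduce the statement to two one–sided derivative estimates and then to invoke Lemma~\ref{asyhcap}. Concretely, I would first prove $\liminf_{t\searrow 0} x(t)/t\ge 2\lambda$ and, by the symmetric argument, $\liminf_{t\searrow 0} y(t)/t\ge 2(1-\lambda)$. Since Lemma~\ref{asyhcap} gives $x(t)/t+y(t)/t\to 2$ as $t\searrow 0$, the second bound forces $\limsup_{t\searrow 0} x(t)/t\le 2-\liminf_{t\searrow 0} y(t)/t\le 2\lambda$, so that $\lim_{t\searrow 0} x(t)/t=2\lambda$ and then also $\lim_{t\searrow 0} y(t)/t=2(1-\lambda)$; this is precisely differentiability at $t=0$ with the asserted derivatives. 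Because $x$ is nondecreasing with $x(0)=0$ we have $x(t)\ge\int_0^t\dot x(s)\,ds$, so it suffices to show that $\dot x(s)\ge 2\lambda$ for a.e.\ $s\in(0,1)$ (and the analogous statement for $y$).

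To control $\dot x(s)$ I would exploit that $(\gamma_1,\gamma_2)$ is a Loewner parametrization of $(\Gamma_1,\Gamma_2)$ with the \emph{constant} weight functions $\lambda,1-\lambda$; in particular $\frac12\,\frac{d}{dr}\big|_{r=s^+}\hcap(\gamma_1(0,r]\cup\gamma_2(0,s])=\lambda_1(s)=\lambda$ for a.e.\ $s$. Fix such an $s$ for which, in addition, $\dot x(s)$ exists (a.e.\ $s$), put $E_s:=g_{\gamma_1(0,s]}(\gamma_2(0,s])$ and $W_1(s):=g_{\gamma_1(0,s]}(\gamma_1(s))\in\R$. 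Since $\overline{\Theta}_1\cap\overline{\Theta}_2=\emptyset$ we have $W_1(s)\notin\overline{E_s}$, and $g_{\gamma_1(0,s]\cup\gamma_2(0,s]}=g_{E_s}\circ g_{\gamma_1(0,s]}$. Applying Lemma~\ref{hcap}(b) twice, one gets for $r\ge s$
\[ \hcap(\gamma_1(0,r]\cup\gamma_2(0,s])-2s=\hcap\bigl(g_{E_s}(Q_r)\bigr),\qquad \hcap(Q_r)=x(r)-x(s), \]
where $Q_r:=g_{\gamma_1(0,s]}(\gamma_1(s,r])$ is a slit emanating from $W_1(s)$. Reparametrizing the growing family $(Q_r)_{r\ge s}$ by half--plane capacity and invoking the capacity transformation rule (\ref{eq:hcap2new2}) — i.e.\ Lemma~2.8 of \cite{LSW:2001}, applied to the fixed hull $E_s$ exactly as in the proof of Lemma~\ref{hcap2new} — shows that the $E_s$--transformed capacity has right derivative $2\,[g_{E_s}'(W_1(s))]^2$ at capacity $0$. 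A chain--rule argument at the single point $r=s$ (using that $r\mapsto x(r)$ is continuous, strictly increasing, and differentiable at $s$) then converts the displayed identities into
\[ 2\lambda=2\lambda_1(s)=\bigl[g_{E_s}'(W_1(s))\bigr]^2\,\dot x(s). \]

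It remains to observe that $\bigl[g_{E_s}'(W_1(s))\bigr]^2\le 1$. Writing $h_{E_s}=g_{E_s}^{-1}$ and using the Nevanlinna representation (\ref{eq:nevanlinna}), $h_{E_s}'(w)=1+\int_\R(v-w)^{-2}\,d\mu_{E_s}(v)\ge 1$ for every $w\notin\supp(\mu_{E_s})$; moreover $U_1(s)=g_{E_s}(W_1(s))\notin\supp(\mu_{E_s})$ by the Stieltjes inversion formula, so $[g_{E_s}'(W_1(s))]^2=[h_{E_s}'(U_1(s))]^{-2}\le 1$. Hence $\dot x(s)=2\lambda/[g_{E_s}'(W_1(s))]^2\ge 2\lambda$ for a.e.\ $s$, and symmetrically $\dot y(s)\ge 2(1-\lambda)$ a.e., which closes the argument via the squeezing described above.

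I expect the genuinely delicate point to be the passage, in the second step, from (\ref{eq:hcap2new2}) — which is stated for slits parametrized by half--plane capacity — to a differentiability statement for $r\mapsto\hcap(\gamma_1(0,r]\cup\gamma_2(0,s])$: one must reparametrize $\gamma_1$ by capacity, write the quantity as $b^{\ast}(x(r)/2)$ for a function $b^{\ast}$ whose right derivative at $x(s)/2$ is supplied by (\ref{eq:hcap2new2}), and then differentiate this composition at the single point $r=s$ knowing only that $x$ is monotone and differentiable there. The remaining ingredients — the factorizations of the conformal maps, Lemma~\ref{hcap}, and the final use of Lemma~\ref{asyhcap} — are of the routine kind already employed in Sections~2 and~3.
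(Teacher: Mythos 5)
Your overall architecture — two one--sided bounds $\liminf x(t)/t\ge 2\lambda$, $\liminf y(t)/t\ge 2(1-\lambda)$ followed by the squeeze via Lemma~\ref{asyhcap} — is exactly the paper's, and the Nevanlinna computation showing $[g_{E_s}'(W_1(s))]^2\le 1$ is correct. The gap is the input ``$\lambda_1(s)=\lambda$ for a.e.~$s$'', i.e.\ the assertion that the \emph{dynamical} weight function of the Loewner parametrization $(\gamma_1,\gamma_2)$, as defined in Section~\ref{sec:two--slit} through right derivatives of half--plane capacity, coincides a.e.\ with the constant $\lambda$ appearing in (\ref{eq:2slits}). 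That identification is not available at this point: the existence argument of Section~\ref{existence} produces a Loewner chain $g_t$ solving (\ref{eq:2slits}) whose hulls form a Loewner parametrization, but it never establishes that the dynamical weight functions of this parametrization equal $\lambda$. Supplying that step would require the converse direction behind Remark~\ref{thm:twoslit} together with a uniqueness--of--coefficients argument, and the paper explicitly declines to rely on Remark~\ref{thm:twoslit}, stressing that the proof of Theorem~\ref{Charlie2} uses only Theorem~\ref{satz:1}. More fundamentally, proving that the ODE weight $\lambda$ governs the actual growth speed of the slit is precisely what Lemmas~\ref{nointer} and~\ref{LipDiff} are for, so feeding $\lambda_1(s)=\lambda$ in as a hypothesis is circular. (The chain--rule issue you flag at the end is real but secondary, and $x$ is not known to be strictly increasing a priori.)

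The paper instead obtains $x(t)\ge 2\lambda t$ without ever touching the dynamical weights: it runs the Loewner ODE with the \emph{same} drivers $U_1,U_2$ but with the step--function weights $\alpha_{n,\lambda}$ of (\ref{Pete}), invokes Theorem~\ref{thm:control} to show the resulting hulls $K_{n,t}$ converge to $\gamma_1(0,t]\cup\gamma_2(0,t]$, separates $K_{n,t}$ into left and right components by following a solution trajectory started between $U_1(0)$ and $U_2(0)$, and then bounds the capacity of the left piece from below on each dyadic subinterval using Lemma~\ref{hcap}(c). Your use of (\ref{eq:hcap2new2}) plus the Nevanlinna representation plays exactly the role that Lemma~\ref{hcap}(c) plays in the paper — it shows the capacity loss under removing $\gamma_2(0,s]$ is at most a factor one — but the premise $\lambda_1(s)=\lambda$ that drives it is the ingredient you have not supplied.
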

\begin{proof} Let $U_1,U_2$ be the driving functions for the
Loewner parametrization  ($\gamma_1,\gamma_2)$. Without loss of generality we assume that $\Gamma_1$ is the left slit, i.e. $U_1(t)<U_2(t)$ for all $t\in[0,1].$

\medskip

(i) In a first step we prove
 $x(t)\geq 2\lambda t$ for all $t\in[0,\tau]$ with some $\tau>0$. Let $n\in\N$ and consider the Loewner equation
$$\dot{g}_{t,n}(z)=
\frac{2\alpha_{n,\lambda}(t)}{g_{t,n}(z)-U_{1}(t)}+\frac{2(1-\alpha_{n,\lambda}(t))}{g_{t,n}(z)-U_{2}(t)},
\quad g_{0,n}(z)=z,$$ where $\alpha_{n,\lambda}$ is defined as in
(\ref{Pete}). Let $K_{n,t}$, $t\in[0,1]$, be the corresponding family of
hulls. From Theorem \ref{thm:control}, we know that $\Ha\setminus K_{n,t}\to \Ha\setminus (\gamma_1(0,t]\cup \gamma_2(0,t])$ for $n\to\infty$ in the sense of  kernel convergence.
Let $z_0\in (U_1(0), U_2(0))$ and denote by $z_n(t)$ the solution to

$$\dot{z}_n(t)= \frac{2\alpha_{n,\lambda}(t)}{z_n(t)-U_{1}(t)}+\frac{2(1-\alpha_{n,\lambda}(t))}{z_n(t)-U_{2}(t)}, \quad z_n(0)=z_0.$$ 
It may not exist until $t=1,$ but during its interval of existence we have $z_n(t)-U_{2}(t)<0<z_n(t)-U_{1}(t)$ and 
$$ \frac{2}{z_n(t)-U_2(t)}\leq \dot{z}_n(t) \leq \frac{2}{z_n(t)-U_1(t)}. $$
From this it follows that there exist $\tau,A,B>0$, independent of $n,$ such that $z_n(t)$ exists until $t=\tau$  and $$\max_{s\in[0,\tau]}U_1(s) < A < z_n(t) < B < \min_{s\in[0,\tau]}U_2(s).$$
Thus, for all $n\in \N$ and $t\in(0,\tau]$, we can write $K_{t,n}=C_{t,n}\cup D_{t,n}$, where $C_{t,n}$ and $D_{t,n}$ are disjoint subhulls of $K_{t,n}$ with $$\Ha\setminus C_{t,n} \to \Ha \setminus \gamma_1(0,t], \quad  \Ha\setminus D_{t,n} \to \Ha \setminus \gamma_2(0,t].$$
The cluster sets of $C_{t,n}$ and $D_{t,n}$ with respect to $g_{t,n}$ are sets
$I_1$ and $I_2$ respectively with $I_1\subset (-\infty, z_n(t))$ and
$I_2\subset (z_n(t),+\infty).$ Hence, $C_{t,n}$ is the hull that is growing if
and only if $\alpha_{n,\lambda}(t)=1.$

\medskip
 Let $x_n(t)=\hcap(C_{n,t})$. Then we get 
\begin{eqnarray*}\label{Alfons}x_n\left(\frac{k}{2^n}\right) = &&\sum_{j=1}^k \left(x_n\left(\frac{j}{2^n}\right)-x_n\left(\frac{j-1}{2^n}\right)\right) = \sum_{j=1}^k \left(x_n\left(\frac{j-1+\lambda}{2^n}\right)-x_n\left(\frac{j-1}{2^n}\right)\right) \\ \underset{\text{Lemma \ref{hcap} (c)}}{\geq} &&\sum_{j=1}^k 2\left(\frac{j-1+\lambda}{2^n}-\frac{j-1}{2^n}\right)  =\sum_{j=1}^k \frac{2\lambda}{2^n} = 2\lambda \cdot \frac{k}{2^n} \end{eqnarray*}
for all $n\in \N$ and $k\in\{1,...,2^n\}$ with $k/2^n\leq \tau$.\\
As $x_n(t)\to x(t)$ for every $t\in[0,\tau],$ we conclude that $x(t)\geq 2\lambda t$ for any $t$ of the form $t=k/2^n\leq \tau.$ The set of all those $t$ is dense in $[0,\tau]$ and as $x(t)$ is a continuous function, we deduce 
$x(t)\geq 2\lambda t$ for every $t\in [0,\tau]$.

\medskip

(ii) In a similar way as in step (i), now 
 utilizing the Loewner equation
$$\dot{h}_{t,n}(z)=
\frac{2(1-\alpha_{n,1-\lambda}(t))}{h_{t,n}(z)-U_{1}(t)}+\frac{2\alpha_{n,1-\lambda}(t)}{h_{t,n}(z)-U_{2}(t)}\,
,
\quad h_{0,n}(z)=z\, ,$$ 
we obtain $y(t)\geq 2(1-\lambda)t$ for all $t\ge 0$ small enough.

\medskip

(iii) Using the estimates in (i) and (ii), Lemma  \ref{asyhcap} gives
$$2\lambda t\leq x(t)\leq 2\lambda t+\Landauo(t) \qquad \text{for} \quad t\to0,$$
i.e., $\dot{x}(0)$ exists and $\dot{x}(0)=2\lambda$. In the same way we obtain
$\dot{y}(0)=2(1-\lambda)$.
\end{proof}


\begin{lemma}\label{LipDiff}  
 Let $x(t)=\hcap(\gamma_1(0,t])$.
Then the function $x:[0,1]\to [0,\infty)$ is continuously differentiable with
$$\dot{x}(0)= 2\lambda \quad\text{and} \quad\dot{x}(t)> 2\lambda \quad \text{for all}
\;\; t \in (0,1]\, .$$
In addition,$$\dot{x}(t)=\frac{2 \lambda}{C(x(t),t)}\, , $$
with a continuous function $C : \{(x_0,t) \, : \, 0 \le x_0 \le t, 0 \le t \le
1\} \to (0,1]$, which is continuously differentiable w.r.t.~$t$.
\end{lemma}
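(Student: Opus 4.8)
The plan is to obtain $\dot{x}(t_0)$, for each fixed $t_0$, by comparing the rate at which $\gamma_1$ gains half--plane capacity as seen by $g_{\gamma_1(0,t_0]}$ with the rate as seen by $g_{t_0}:=g_{\gamma_1(0,t_0]\cup\gamma_2(0,t_0]}$; the two are linked by the capacity--scaling lemma \cite[Lemma 2.8]{LSW:2001}, exactly as in the proof of Lemma \ref{hcap2new}. First note $\lambda\in(0,1)$: if $\lambda=1$, equation (\ref{eq:2slits}) is the one--slit equation, hence $g_1=g_{\Gamma_1}$ and $\Gamma_2=\emptyset$, and symmetrically $\lambda=0$ is impossible. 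Recall that $x$ is continuous and non--decreasing (Lemma \ref{hcap}(b) and condition (i) of a Loewner parametrization). For $t_0\in[0,1]$ write $g_{t_0}=\phi_{t_0}\circ g_{\gamma_1(0,t_0]}$ with $\phi_{t_0}:=g_{E_2(t_0)}$, $E_2(t_0):=g_{\gamma_1(0,t_0]}(\gamma_2(0,t_0])$, and put $u_1(t_0):=g_{\gamma_1(0,t_0]}(\gamma_1(t_0))\in\R$, the boundary value at the tip. Since $\overline{\Gamma_1}\cap\overline{\Gamma_2}=\emptyset$, we have $u_1(t_0)\notin\overline{E_2(t_0)}$, so $\phi_{t_0}$ extends analytically across a neighbourhood of $u_1(t_0)$ with $\phi_{t_0}'(u_1(t_0))\neq 0$, by Schwarz reflection as in part (ii) of the proof of Lemma \ref{hcap2new}.

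Next I would build the function $C$. Given $x_0\geq 0$, let $G_{x_0}$ be the mapping--out function of the sub--slit of $\Gamma_1$ of half--plane capacity $x_0$, let $u_1(x_0)$ be the image of its tip, and let $\Xi_{x_0}:=G_{x_0}(\Gamma_2)$, so $u_1(x_0)\notin\overline{\Xi_{x_0}}$. For $t$ with $0\leq 2t-x_0\leq\hcap(\Xi_{x_0})$ let $E_2(x_0,t)$ be the sub--slit of $\Xi_{x_0}$ of half--plane capacity $2t-x_0$, and set
$$ C(x_0,t):=\bigl[\,g'_{E_2(x_0,t)}(u_1(x_0))\,\bigr]^2 . $$
This domain contains every $(x(t),t)$, $t\in[0,1]$, because $E_2(t):=E_2(x(t),t)=g_{\gamma_1(0,t]}(\gamma_2(0,t])\subseteq\Xi_{x(t)}$ has capacity $2t-x(t)\geq 0$ by Lemma \ref{hcap}, and $C(x(t_0),t_0)=[\phi_{t_0}'(u_1(t_0))]^2$. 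The Nevanlinna formula (\ref{eq:nevanlinna}) for $g_{E_2(x_0,t)}^{-1}$ gives a derivative $1+\int_{\R}(s-w)^{-2}\,d\mu_{E_2(x_0,t)}(s)\geq 1$ at $w=g_{E_2(x_0,t)}(u_1(x_0))\notin\supp\mu_{E_2(x_0,t)}$, strictly $>1$ once $E_2(x_0,t)\neq\emptyset$; hence $C$ maps into $(0,1]$, with $C(x_0,t)<1$ exactly when $2t>x_0$. For fixed $x_0$, $t\mapsto E_2(x_0,t)$ runs through the capacity--parametrization of $\Xi_{x_0}$ at ``time'' $r=t-x_0/2$, so by Theorem \ref{satz:1} the chain $g_{E_2(x_0,t)}$ solves a one--slit equation $\partial_r g_r(w)=2/(g_r(w)-V_{x_0}(r))$; differentiating this and its $w$--derivative at $w=u_1(x_0)$ — where everything is analytic and $g_r(u_1(x_0))-V_{x_0}(r)$ is bounded away from $0$ — shows $\partial_t C$ exists and is continuous, and joint continuity of $C$ follows from standard continuous dependence of the one--slit flow on its slit, applied to $x_0\mapsto G_{x_0}$, $x_0\mapsto u_1(x_0)$ (a continuous driving function) and $(x_0,t)\mapsto g_{E_2(x_0,t)}$.

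Now fix $t_0\in[0,1)$ and, for small $s\geq 0$, set $b(s):=x(t_0+s)-x(t_0)=\hcap(M_s)$ with $M_s:=g_{\gamma_1(0,t_0]}(\gamma_1(t_0,t_0+s])$, and $b^*(s):=\hcap\bigl(g_{t_0}(\gamma_1(t_0,t_0+s])\bigr)=\hcap(\phi_{t_0}(M_s))$; by Lemma \ref{hcap}(b), $b^*(s)=\hcap(\gamma_1(0,t_0+s]\cup\gamma_2(0,t_0])-2t_0$. As $s\searrow 0$ the slit $M_s$ shrinks to the point $u_1(t_0)$ and $\hcap(M_s)\to 0$, so \cite[Lemma 2.8]{LSW:2001} (applied as in the proof of Lemma \ref{hcap2new}) yields $b^*(s)/b(s)\to[\phi_{t_0}'(u_1(t_0))]^2=C(x(t_0),t_0)>0$. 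On the other hand, $b^*(s)$ is exactly the capacity gained by the first slit of the shifted Loewner parametrization $\bigl(g_{t_0}\circ\gamma_1(t_0+\cdot),\,g_{t_0}\circ\gamma_2(t_0+\cdot)\bigr)$ of the disjoint two--slit $\bigl(g_{t_0}(\Gamma_1\setminus\gamma_1(0,t_0]),\,g_{t_0}(\Gamma_2\setminus\gamma_2(0,t_0])\bigr)$, which again has constant weights $\lambda,1-\lambda$ (restarting (\ref{eq:2slits}) at $t_0$ conjugates it by $g_{t_0}$ and preserves the weights). The conclusion of Lemma \ref{nointer}, whose proof is local at the initial time, therefore applies to this parametrization and gives $\dot{b}^*_+(0)=2\lambda>0$; in particular $b^*(s)$ and $b(s)$ are positive for small $s>0$. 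Combining, $b(s)=\dfrac{2\lambda}{C(x(t_0),t_0)}\,s+\Landauo(s)$, so the right derivative $\dot{x}_+(t_0)$ exists and equals $2\lambda/C(x(t_0),t_0)$.

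Finally, $t\mapsto 2\lambda/C(x(t),t)$ is continuous on $[0,1)$ — by joint continuity of $C$, continuity of $x$, and $C$ being bounded away from $0$ on the compact set $\{(x(t),t):t\in[0,1]\}$ — so Lemma 4.3 of \cite{Lawler:2005}, as used in step (iii) of the proof of Lemma \ref{hcap2new}, upgrades this to: $x$ is continuously differentiable with $\dot{x}(t)=2\lambda/C(x(t),t)$; at $t=0$ this reads $\dot{x}(0)=2\lambda/C(0,0)=2\lambda$, matching Lemma \ref{nointer}. For $t\in(0,1]$ we have $y(t)=\hcap(\gamma_2(0,t])>0$, since $\dot{y}(0)=2(1-\lambda)>0$ by Lemma \ref{nointer} and $y$ is non--decreasing; hence $\hcap(E_2(t))=2t-x(t)>0$, so $E_2(t)\neq\emptyset$ and $C(x(t),t)<1$, that is $\dot{x}(t)>2\lambda$. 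The main obstacle is the bookkeeping behind the joint continuity and $t$--differentiability of $C$, together with the careful use of the scaling lemma \cite[Lemma 2.8]{LSW:2001} for the function $b$, which is not a priori known to be differentiable; both, however, are direct analogues of steps already carried out in the proof of Lemma \ref{hcap2new}.
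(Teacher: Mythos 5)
Your proposal is correct and follows essentially the same route as the paper: you define the same correction factor $C(x_0,t)$ as the squared derivative of the mapping-out function of the $\Gamma_2$-part at the image of the $\Gamma_1$-tip, use Lemma 2.8 of Lawler--Schramm--Werner for the capacity chain rule $b^*(s)/b(s)\to C$, obtain $\dot b^*_+(0)=2\lambda$ by applying Lemma \ref{nointer} to the restarted chain (exactly as the paper applies it to $g_t(\Gamma_1\setminus\gamma_1[0,t])$), and upgrade the continuous right derivative to $C^1$ via Lemma 4.3 of Lawler. The only small variations are cosmetic: you argue $C<1$ via the Nevanlinna representation rather than citing Proposition 5.15 of Lawler, and you make explicit the observation $\lambda\in(0,1)$ and the remark that restarting the Schramm--Loewner equation at $t_0$ preserves the constant weights, both of which the paper leaves implicit.
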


\begin{proof}
For $j=1,2$ denote by $\theta_j(s)$  the parameterization  of $\Gamma_j$
 by its half--plane capacity. Let $t\in[0,1]$ and  let $0\le x_0 \leq t.$ Then
 there exists a unique $y_0 \in [0,1]$ such that
 $\theta_1[0,x_0]\cup\theta_2[0,y_0]$ has half--plane capacity $2t$. Apply the
 mapping $A:=g_{\theta_1[0,x_0]}$ on the two slits. We define $\chi(\Delta):=A(\theta_1(x_0+\Delta))$ for all $\Delta \ge 0$ small
 enough. Then we have by Lemma \ref{hcap} (b),
 $\hcap(\chi[0,\Delta])=\hcap(\theta_1[0,x_0+\Delta])-\hcap(\theta_1[0,x_0])=2\Delta$.
 Next we apply the mapping $B:=g_{A(\theta_2[0,y_0])}.$ Let $\psi(\Delta):=B(\chi(\Delta))$.  Now we have $$\frac{\hcap(\psi[0,\Delta])}{2\Delta}\to B'(\chi(0))^2 \quad \text{for} \quad \Delta\to 0,$$ see \cite{LSW:2001}, Lemma 2.8. Note that $B'(\chi(0))^2$ depends on $x_0$ and $t$ only. So let us define the function $C(x_0,t):=B'(\chi(0))^2.$ $C$ has the following properties:
 \begin{itemize} \item $(x_0,t)\mapsto C(x_0,t)$ is continuous: $A$ and $\chi(0)$ depend continuously on $x_0$. Furthermore, $y_0$ depends continuously on the pair $(x_0,t),$ so $B$ depends continuously on $(x_0,t)$ as well as $C(x_0,t)=B'(\chi(0))^2.$  
\item $C$ is continuously differentiable with respect to $t$: For fixed $x_0$, both the value $y_0$ and the mapping $B$ are continuously differentiable with respect to $t,$ see section 4.6.1 in \cite{Lawler:2005}. Hence, as $\chi(0)$ is fixed, also $B'(\chi(0))$ is continuously differentiable w.r.t.~$t$.
\item $C(x_0,t)\in (0,1)$ for all $0\leq x_0 < t \leq 1$: see Proposition 5.15 in \cite{Lawler:2005}.
\end{itemize}
Now we look at the case $x_0=x(t).$ Then
$x(t+h)-x(t)=\hcap(\chi[0,\Delta(h)])=2\Delta(h)$ and we know
that $$\lim_{h\downarrow 0}\frac{\hcap(\psi[0,\Delta(h)])}{h}=2\lambda.$$
This follows by applying Lemma \ref{nointer} to the slit $g_t(\Gamma_1\setminus \gamma_1[0,t])$. Thus
 $$\lim_{h\downarrow 0}\frac{x(t+h)-x(t)}{h}=\lim_{h\downarrow
  0}\frac{2\Delta(h)}{h}=\lim_{h\downarrow 0}\frac{2\Delta(h)\cdot
  \hcap(\psi[0,\Delta(h)])}{\hcap(\psi[0,\Delta(h)])\cdot
  h}=\frac{2\lambda}{C(x(t),t)}.$$  Hence, the right derivative of $x(t)$ exists
and is continuous, so $x(t)$ is continuously differentiable, see Lemma 4.3 in
\cite{Lawler:2005}, and 
$$\dot{x}(t)=\frac{2\lambda}{C(x(t),t)}\, .$$
\end{proof}

\section{\label{uniqueness} Proof of Theorem \ref{Charlie2}, Part II (Uniqueness)}

Let $\nu, \mu\in[0,1]$ be constant weights  and  $U_1,U_2,V_1,V_2:[0,1]\to\R$
be continuous driving functions such that the solutions $g_t$ and $h_t$ of
 $$\dot{g}_t=\frac{2\nu}{g_t-U_1(t)}+\frac{2(1-\nu)}{g_t-U_2(t)},\, g_0(z)=z \quad\text{and} \quad\dot{h}_t=\frac{2\mu}{h_t-V_1(t)}+\frac{2(1-\mu)}{h_t-V_2(t)}, \,h_0(z)=z$$  satisfy $g_1=h_1=g_{\Gamma_1\cup \Gamma_2}.$

\medskip

Assume $\nu >\mu$.
Let $x_1(t)$ and $x_2(t)$ be the half--plane capacities of the generated part
of $\Gamma_1$ at time $t$ with respect to $g_t$ and $h_t$, and let $y_1(t)$
and $y_2(t)$ be the corresponding half--plane capacities of $\Gamma_2.$ Then
$\dot{x}_1(0)=\nu>\mu=\dot{x_2}(0)$ by Lemma \ref{nointer}. Consequently,
$x_1(t)>x_2(t)$ for all $t \ge 0$ small enough. 
Since $x_1(1)=\hcap(\Gamma_1)=x_2(1)$, there is a first time $\tau \in (0,1]$
such that $x_1(\tau)=x_2(\tau)$. Then $x_1(t)>x_2(t)$ for every $t \in
(0,\tau)$, so $\dot{x}_1(\tau)\le \dot{x}_2(\tau)$. On the other hand,
 Lemma \ref{LipDiff} shows that
$$ \dot{x}_1(\tau)=\frac{2 \nu}{C(x_1(\tau),\tau)} > \frac{2
  \mu}{C(x_2(\tau),\tau)}=\dot{x}_2(\tau) \, , $$
a contradiction. Hence we know that $\nu \le \mu$. By switching the roles of
$\nu$ and $\mu$, we deduce $\nu=\mu$.

\medskip

Next, again with the help of  Lemma \ref{LipDiff}, we see that both
functions $x_1$ and $x_2$ are solutions to the same initial value problem,
$$ \dot{x}(t)=\frac{2 \mu}{C(x(t),t)}\, , \quad x(0)=0\, , $$
where $(x_0,t)\mapsto 2\mu/C(x_0,t)$ is continuous, positive and Lipschitz
continuous in $t$. However, the solution to such a problem is
unique according to Theorem 2.7 in \cite{Cid:2003}. Hence $x_1=x_2$ and also
$y_1=y_2$, so
we have $$H(t):=\theta_1[0,x_1(t)]\cup \theta_2[0,y_1(t)] =
\theta_1[0,x_2(t)]\cup \gamma_2[0,y_2(t)]$$ for all $t.$ Using the geometric
meaning of the driving functions (see Section \ref{sec:two--slit}), we finally get
$ U_j(t)=g_{H(t)}(\theta_j(x_1(t))=g_{H(t)}(\theta_j(x_2(t))=V_j(t)$ for $j=1,2$.
This completes the proof of the uniqueness statement of Theorem \ref{Charlie2}.

\hide{\section{Appendix: The two--slit chordal Loewner equation}

We give a very brief sketch of proof for Theorem \ref{thm:twoslit}.

\medskip

We first consider the intervall $I_0=[0,\tau_1)$. There 
$\alpha(t)=1$, so 
$\gamma_{1,\alpha}(t)=\theta_1(2t)$ and
$\gamma_{2,\alpha}(t)=\theta_2(0)$, so $\hcap(\theta_1(0,t])=\hcap(\theta_1(0,t]
\cup \theta_2(0,t])=2t$. By Theorem \ref{satz:1}, 
$t \mapsto g_{t,\alpha}$ is the solution to
$$ \dot{g}_{t,\alpha}(z)=\frac{2}{g_{t,\alpha}(z)-U_{1,\alpha}(t)}=
\frac{2 \alpha(t)}{g_{t,\alpha}(z)-U_{1,\alpha}(t)}+\frac{2 (1-\alpha(t))}{g_{t,\alpha}(z)-U_{2,\alpha}(t)}
 \, , \qquad t \in I_0 \, .$$
Now assume that we already know that $t \mapsto g_{t,\alpha}$ is a solution to

***********************************************************************************

\bigskip

Let $(\gamma_1,\gamma_2)$ be a Loewner parametrization and
$g_t=g_{\gamma_1(0,t] \cup \gamma_2(0,t]}$. The inverse mapping
$g_{t}^{-1}$ extends continuously to $\overline{\H}$ and there are disjoint intervals
$C_t$ and $D_t$ with $U_1(t) \in C_t$ and $U_2(t) \in D_t$ such that $\Im
(g^{-1}(w))=0$ for all $w \in \R \backslash (C_t \cup D_t)$. Since $g^{-1}(w)-w$
tends to $0$ as $w \to \infty$ fast enough, we get (see \cite[Proposition 2.2]{GM}) that
$$ g_t(z)=z-\frac{1}{\pi} \int \limits_{C_t \cup D_t} \frac{\Im
  (g_t^{-1}(w))}{w-g_t(z)} \, dw \, $$
and
$$ t=\frac{1}{\pi} \int \limits_{C_t \cup D_t} \Im
  (g_t^{-1}(w)) \, dw \, .$$
This implies for $t>t_0$
\begin{eqnarray*}
\frac{g_t(z)-g_{t_0}(z)}{t-t_0} &=& -\frac{1}{\pi (t-t_0)} \int \limits_{C_t
  \backslash C_{t_0}} \frac{\Im
  (g_t^{-1}(w))}{w-g_t(z)} \, dw +\frac{1}{\pi (t-t_0)} \int \limits_{D_t
  \backslash D_{t_0}} \frac{\Im
  (g_t^{-1}(w))}{w-g_t(z)} \, dw \, .\\
\end{eqnarray*}
Using the mean value theorem (as in \cite[p.~13]{GM}) and the fact that $C_t$
shrinks to $U_1(t_0)$ and $D_t$ to shrinks to $U_2(t_0)$, we get
$$ \dot{g}_t(z)=\frac{2 \lambda_1(t)}{g_t(z)-U_1(t)}+\frac{2
  \lambda_2(t)}{g_t(z)-U_2(t)} \, , $$
where
\begin{eqnarray*}
 2 \lambda_1(t)&=&\lim \limits_{t \searrow t_0} \frac{1}{\pi (t-t_0)} \int \limits_{C_t
  \cup C_{t_0}} \Im
  (g_t^{-1}(w)) \, dw \\ &=& \lim \limits_{t \searrow t_0}
  \frac{\hcap(\gamma_1(0,t] \cup \gamma_2(0,t_0])-\hcap(\gamma_1(0,t_0] \cup
    \gamma_2(0,t_0]}{t-t_0} \\
& =& \frac{d}{ds} \bigg|_{s=0+} \hcap \left(
\gamma_1(0,t_0+s] \cup \gamma_2(0,t_0] \right) \, 
\end{eqnarray*}
and the corresponding formula for $\lambda_2(t)$.}

\bibliographystyle{amsalpha}

\vspace{1.0cm}
 {\footnotesize
 \textsc{University of W\"urzburg, Department of Mathematics, 97074 W\"urzburg, Germany}\\
 \textit{E-mail addresses:}\\{ \tt roth@mathematik.uni-wuerzburg.de,\\
   sebastian.schleissinger@mathematik.uni-wuerzburg.de}}

\end{document}